\documentclass[11pt]{article}
\usepackage{amsthm,amsmath,amssymb}
\usepackage[margin=25mm]{geometry}
\usepackage{mathtools}
\usepackage{enumitem}
\usepackage{hyperref}
\usepackage{xcolor}
\usepackage{bm}
\usepackage[normalem]{ulem}
\mathtoolsset{showonlyrefs=true}

\allowdisplaybreaks[4]

\theoremstyle{definition}
\newtheorem{exm}{Example}[section]

\theoremstyle{plain}
\newtheorem{lem}[exm]{Lemma}
\newtheorem{prop}[exm]{Proposition}
\newtheorem{thm}[exm]{Theorem}
\newtheorem{cor}[exm]{Corollary}
\newtheorem{assum}[exm]{Assumption}

\DeclareMathOperator{\supp}{supp}

\newcommand{\NN}{\mathbb{N}}
\newcommand{\ZN}{\mathbb{Z}}
\newcommand{\RN}{\mathbb{R}}

\newcommand{\CutTimes}{\mathcal{A}}
\newcommand{\Cluster}{A}
\newcommand{\Charact}{\tilde{C}}

\newcommand{\func}{g}

\title{Scaling limit of the collision measure for two-dimensional random walks}

\author{Shuta Nakajima \thanks{Department of Mathematics, Keio University. E-mail: njima@keio.jp}
\and Ryoichiro Noda\thanks{Faculty of Science and Engineering, Waseda University.
E-mail: r\textunderscore noda@aoni.waseda.jp}}
\date{}
\begin{document}
\maketitle

\begin{abstract}
We study the scaling limit of the collision measure of two i.i.d. discrete-time random walks on $\ZN^2$, which records their collision sites and times.
This is a critical regime for collisions: the walks collide infinitely often, whereas the limiting Brownian motions do not collide at positive times.
Hence, existing general results on the convergence of collision measures do not apply. 
Assuming that the jump distribution has mean zero and finite second moment, we prove that, under a logarithmic scaling, the collision measure converges in distribution to a non-trivial random measure.
We also give an explicit representation of the limiting measure in terms of a Poisson point process.
To prove convergence, we introduce a new cluster decomposition of the Laplace transform of the collision measure, revealing the cluster structure of collisions.
\end{abstract}

\section{Introduction}

Given two stochastic processes on a common state space, we say that they collide if they visit a common point at the same time.
Collisions are, in themselves, a natural object in the study of stochastic processes.
Moreover, in the polymer model, collisions naturally appear in moment formulae for the associated partition function, and their asymptotics play a crucial role in the analysis of the model; cf.\ \cite{Lygkonis_Zygouras_23_Moments}.

The study of collisions can be traced back to P\'{o}lya's work~\cite{Polya_1921_recurrence}
regarding the recurrence and transience of simple random walks on the lattice $\mathbb{Z}^d$.
Indeed, in this setting,
the recurrence of a walk is equivalent to infinite collisions of a walk and an independent copy.
In general, this equivalence fails~\cite{Krishnapur_Peres_04_Recurrent},
and the problem of such infinite collisions of random walks has been investigated for various graphs;
see, e.g., \cite{Barlow_Peres_Sousi_12_Collisions,DeAmbroggio_Nitzschner_Scali_2025_Collisions,Halberstam_Hutchcroft_22_Collision,Watanabe_23_ICPfor3dUST}.
Another direction concerns the fractal geometry of collisions,
such as Hausdorff dimensions of the sets of collision times and collision sites.
This line of study was initiated by Jain and Pruitt for stable processes~\cite{Jain_Pruitt_69_Collisions},
and has been extended to more general stochastic processes; see~\cite{Knopova_Schilling_15_CollisionofFeller,Noda_pre_Exact,Shiozawa_Wang_24_Hausdorff}.

In this paper, we study scaling limits of collision measures,
that is, space-time point processes consisting of collision sites and collision times of two independent random walks.
In \cite{Nguyen_23_Collision}, 
Nguyen considered the case of collisions of independent simple random walks on $\mathbb{Z}$.
He showed that the associated collision measure converges,
under the usual Brownian scaling,
to a non-trivial space-time random measure.
The idea of his approach is to connect the collision measure with the partition function of a spatially inhomogeneous directed polymer model through certain moment identities.
Using a different approach based on the theory of positive continuous additive functionals of Markov processes, the second author \cite{Noda_pre_STOM} established a general framework for obtaining scaling limits of collision measures, applicable to random walks on various (random) graphs.

We consider such a problem for two i.i.d.\ random walks on $\mathbb{Z}^2$ whose jump distribution has mean zero and finite second moment.
This lies in a critical regime in the sense that they collide infinitely often,
while the limiting processes, i.e., two independent two-dimensional Brownian motions, never collide at positive times.
In particular, this setting is outside the framework of \cite{Noda_pre_STOM}.
To see this critical behavior more quantitatively, consider two independent simple random walks on $\mathbb{Z}^2$ as a concrete example.
Let $N_T$ be their total number of collisions up to time $T$.
Then $N_T$ grows logarithmically in $T$; more precisely, the classical theorem of Erd\H{o}s and Taylor~\cite{Erdos_Taylor_60_Some} gives
\begin{equation}
    \frac{\pi}{\log T}N_T \xrightarrow[T\to\infty]{\mathrm{d}}\mathrm{Exp}(1),
\end{equation}
where $\mathrm{Exp}(1)$ stands for the exponential distribution with mean $1$ and the arrow $\xrightarrow{\mathrm{d}}$ denotes convergence in distribution.
Extensions of the Erd\H{o}s--Taylor theorem to the joint asymptotics of the numbers of pairwise collisions for multiple simple random walks on $\ZN^2$ have been studied in \cite{Cosco_Nakajima_25_High,Cosco_Zeitouni_23_MomentsI,Liu_26_Independence,Lygkonis_Zygouras_24_Multivariate}. 
However, the joint scaling limit of collision sites and times has remained open even for two walks.

Returning to the general class of random walks described above, we show in Theorem~\ref{thm:main} below that the collision measure of the two walks converges, under a suitable logarithmic scaling, to a non-trivial random measure. 
Moreover, in Theorem~\ref{thm:limit meas}, we give an explicit representation of the limiting measure in terms of a Poisson point process.
These results reveal the structure of collisions: collision times form clusters, and the spatial locations of distinct clusters become asymptotically independent of one another and of the collision times.
Since collision measures naturally arise in the analysis of the partition functions of spatially inhomogeneous directed polymers (see \cite[Section~2]{Nguyen_23_Collision}), we expect the results and methods developed here to be useful in the study of such models.

To state the main results, we introduce some notation.
We write $\ZN$ and $\RN$ for the sets of integers and real numbers, respectively.
We set $\NN \coloneqq \ZN \cap (0,\infty)$.
Let $S=(S_n)_{n\geq0}$ be a random walk on $\ZN^2$, 
i.e., a discrete-time Markov chain such that $(S_{n+1}-S_n)_{n\geq0}$ is an i.i.d.\ sequence of $\ZN^2$-valued random vectors.
We assume that the jump distribution, i.e., the law of $S_1 - S_0$, has mean zero and a finite second moment.
Moreover, we assume that $S$ is irreducible, that is, for any $x \in \ZN^2$, we have $P(S_n = x) > 0$ for some $n$, where $P$ denotes the underlying probability measure for $S$ started from the origin.

Let $S^1 = (S^1_n)_{n \geq 0}$ and $S^2 = (S^2_n)_{n \geq 0}$ be independent copies of $S$.
We are interested in a random measure on $\RN^2 \times [0, \infty)$ given by  
\begin{equation}
    \Pi(dx\, dt) 
    \coloneqq 
    \sum_{n=1}^\infty \delta_{(S^1_n, n)}(dx\, dt)\, 
    \mathbf{1}_{\{S^1_n = S^2_n\}},
\end{equation}
which we call the \emph{collision measure} of $S^1$ and $S^2$.
Here, $\mathbf 1_{\cdot}$ denotes the indicator function
and $\delta_{(x,t)}$ denotes the Dirac measure at $(x,t)$. 
To consider its scaling limit,
we introduce a family $(\phi_N)_{N \geq 2}$ of space--time scaling maps from $\ZN^2 \times \NN$ to $\RN^2 \times [0,\infty)$ as follows:
\begin{equation} \label{eq:phi}
    \phi_N(x,t) 
    \coloneqq 
    \left( \frac{x}{\sqrt{t}}, \frac{\log t}{\log N}\right).
\end{equation}
Using this, for each $N \geq 2$, we define the scaled collision measure $\Pi_N$ by 
\begin{equation} \label{eq:Pi_N}
    \Pi_N(dx\, dt) 
    \coloneqq 
    \frac{1}{\log N} 
    \sum_{n = 1}^\infty \delta_{\phi_N(S^1_n, n)}(dx\, dt) 
    \mathbf{1}_{\{S^1_n = S^2_n\}}.
\end{equation}
In the following theorem, we establish the distributional convergence of $\Pi_N$ in the vague topology.

\begin{thm} \label{thm:main}
    Run both $S^1$ and $S^2$ from the origin.
    Then the scaled collision measure $\Pi_N$ converges to a non-trivial random measure $\Pi_*$ in distribution with respect to the vague topology, that is, 
    for any compactly supported continuous function $f\colon \RN^2 \times [0,\infty) \to \RN$,
    \begin{align}
        \int_{\RN^2 \times [0,\infty)} f(x, t)\, \Pi_N(dx\, dt)
        &=
        \frac{1}{\log N} 
        \sum_{n = 1}^\infty 
        f\!\left( \frac{S^1_n}{\sqrt{n}}, \frac{\log n}{\log N} \right)
        \mathbf{1}_{\{S^1_n = S^2_n\}}\\[0.2em]
        &\xrightarrow[N \to \infty]{\mathrm{d}}
        \int_{\RN^2 \times [0,\infty)} f(x, t)\, \Pi_*(dx\, dt).
    \end{align}
\end{thm}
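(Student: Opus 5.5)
The plan is to prove convergence of Laplace functionals: for every nonnegative compactly supported continuous $f$ on $\RN^2\times[0,\infty)$, show that $E[\exp(-\int f\,d\Pi_N)]$ converges to a limit $L(f)$. We then identify $L(f)$ as the Laplace functional of an explicit random measure $\Pi_*$; by the standard criterion this gives convergence in distribution for the vague topology. Tightness is cheap. Indeed, $E[\Pi_N(\RN^2\times[0,T])] = (\log N)^{-1}\sum_{n\le N^T} P(S^1_n=S^2_n)$. The local CLT for the difference walk $S^1-S^2$ gives $P(S^1_n=S^2_n)\sim c/n$, with $c=(2\pi\sqrt{\det 2\Sigma})^{-1}$, so this mean is $\sim cT$ and the masses of compact sets are tight. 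Non-triviality follows because the total mass on $[0,1]$ converges to an exponential law, by the Erd\H{o}s--Taylor argument generalised via the local CLT.

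The heuristic behind the limit is that collisions come in clusters. After a collision at time $n$, the walks recollide within a short window $[n, n+n^{o(1)}]$ with positive probability. On the logarithmic time scale such a window is a single point, and the spatial coordinate $S^1_m/\sqrt m$ barely moves across it. Distinct clusters are separated on the $\log$-scale, and the difference walk must make a fresh excursion between them. The conditional position of $S^1_n/\sqrt n$ given $S^1_n=S^2_n$ is asymptotically Gaussian, namely the law of $(B^1_1+B^2_1)/2$ with covariance $\Sigma/2$. It is independent of the past at the macroscopic scale, since the clusters occur at times $N^{t}$ that are widely separated multiplicatively.

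I would therefore expand $\exp(-\int f\,d\Pi_N)=\prod_n(1-(1-e^{-f_N(n)/\log N})\mathbf 1_{\{S^1_n=S^2_n\}})$ into a sum over ordered collision times $n_1<\dots<n_k$ and group these into clusters. Two times are put in the same cluster when $\log n_{i+1}-\log n_i$ is at most a cut-off $\epsilon\log N$ (or a similar mesoscopic threshold). Within a cluster, the Markov property and the local CLT give the weight. Summing the renewal series of returns of the difference walk yields a factor $(1-e^{-\lambda})$-type resummation: the $1/\log N$ from each collision combines with the $\sim c\log(\text{window})$ Green's function. Between clusters, the two-point kernel $P(S^1_m=S^2_m\mid S^1_n=S^2_n)\approx c/(m-n)$, combined with the spatial kernel, factorises in the limit. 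The expected outcome is
\begin{equation}
L(f)=\lim \sum_k \int_{0<t_1<\dots<t_k} \prod_{i=1}^k\Bigl(-c\int \bigl(1-e^{-\cdots}\bigr)\Bigr)dt_i,
\end{equation}
which matches the Laplace functional of a Poisson-type construction. This would be consistent with a representation in which cluster times are Poisson points in $t$ and the spatial marks are i.i.d.\ Gaussian. The exponential total mass of each cluster explains the Erd\H{o}s--Taylor law, with the conditioning "started at origin" producing the time-$0$ correction.

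The main obstacle is the cluster decomposition estimate. We must show that contributions where clusters are not well separated, or where the spatial point drifts within a cluster, vanish as $N\to\infty$ and then $\epsilon\to 0$. We also need uniform domination of the alternating series to exchange limit and sum. I would first restrict to $f$ supported in $t\ge\delta$ and control the small-$t$ region separately using the first-moment bound. I would then bound the $k$-fold sums by $(C\|f\|_\infty)^k T^k/k!$ via the $c/n$ kernel to justify dominated convergence. Finally, I would use the local CLT with uniform error (finite second moment only gives $o(1/n)$ errors) to replace the spatial distribution by the Gaussian. Handling the exponential-order resummation inside clusters, where $1/\log N$ factors meet logarithmically divergent Green's function sums, is where I expect the delicate bookkeeping.
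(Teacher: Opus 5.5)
Your architecture matches the paper's: Laplace functionals, expanding $\prod_n(1+g_N\mathbf 1_D(\bm{S}_n))$ over ordered times, removing $t<\delta$ by the first moment, grouping into clusters, decoupling distinct clusters, Gaussian marks from the LCLT, and Erd\H{o}s--Taylor inside a cluster. However, two steps fail as stated.

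\textbf{The domination bound is false.} Your bound $(C\|f\|_\infty)^kT^k/k!$ does not hold. The $k$-point collision kernel is $\prod_i (t_i-t_{i-1})^{-1}$, not $\prod_i t_i^{-1}$. The sum $\sum_{0<t_1<\cdots<t_k\le N^T}\prod_i(t_i-t_{i-1})^{-1}$ is of order $(T\log N)^k$, with no $1/k!$. Equivalently, let $X$ be the number of collisions up to $N^T$. Then $X/\log N$ is asymptotically exponential with mean $T/\Charact$, so $E\binom{X}{k}\approx(T\log N/\Charact)^k$. The limiting transform $\Charact/(\lambda T+\Charact)$ has a pole at $\lambda=-\Charact/T$. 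So the expansion converges absolutely only for small $\|f\|_\infty$, and you cannot exchange $N\to\infty$ with the sum for general $f$. The paper handles this as follows:
\begin{itemize}
\item It imposes $c_2\kappa\|f\|_\infty<1$ (Assumption~\ref{assum:test}), which makes the tail over many clusters geometric (Proposition~\ref{prop:many clust}).
\item It proves convergence of $E[e^{-u\Pi_N(f)}]$ only for small $u$.
\item It upgrades to $\Pi_N(f)\to\Pi_*(f)$ in distribution via a result of Mukherjea--Rao--Suen.
\end{itemize}
With your tightness bound you could instead argue that Laplace transforms of nonnegative variables agreeing on an interval agree everywhere by analyticity. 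Either way, some such step is needed.

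\textbf{The clustering rule does not resum, and the scale heuristic is off.} Given a collision at $a\approx N^t$, the expected number of later collisions at gaps in $[a^s,a^{s+ds}]$ is about $t\log N\,ds/\Charact$. So a cluster's mass is spread uniformly in $\log(\text{gap})$ over gaps up to $a^{1-o(1)}$. A window $[a,a+a^{o(1)}]$ therefore carries only an $o(1)$ fraction of it. Conversely, chaining consecutive times with log-gap at most $\epsilon\log N$ also swallows later clusters carrying fresh Gaussian marks.

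More importantly, under a consecutive-gap rule the inner sum runs over chains whose extent depends on the chosen times. It is then not the Laplace transform of a collision count over a fixed window, so Erd\H{o}s--Taylor does not apply.

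The missing idea is the anchored decomposition of Proposition~\ref{prop:chaos}. A cluster is its first time $a$ together with all chosen times in the deterministic window $(a,a+\lfloor a^{1-\varepsilon}\rfloor]$. Summing over all subsets of that window then gives exactly $g_N(S^1_a,a)\mathbf 1_D(\bm{S}_a)\exp\bigl(-\sum_{k=a+1}^{a+\lfloor a^{1-\varepsilon}\rfloor}f_N(S^1_k,k)\mathbf 1_D(\bm{S}_k)\bigr)$. The rest of the argument then proceeds in these steps:
\begin{itemize}
\item Cluster starts with $a_l\le a_{l-1}+a_{l-1}^{1+\varepsilon}$ are discarded at cost $O(\varepsilon)$.
\item The resulting separation $a_l/b_{l-1}\to\infty$, where $b_{l-1}=a_{l-1}+\lfloor a_{l-1}^{1-\varepsilon}\rfloor$, feeds the decoupling estimate of Proposition~\ref{prop:asymp indep}. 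This is applied after restricting to $|\bm{S}_{b_l}|\le 4R\sqrt{b_l}$.
\item The displacement $a^{(1-\varepsilon)/2}\ll\sqrt a$ inside the window lets one freeze $f_N$ at $(S^1_a,a)$.
\item Corollary~\ref{cor:col num conv}, linearization of $g_N$, and Corollary~\ref{cor:sum to int} turn each cluster into the factor $-f/((1-\varepsilon)tf+\Charact)$ averaged against $\nu_{\Gamma/2}$.
\end{itemize}
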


The limiting random measure can be represented by a Poisson point process.
Let $d$ be the periodicity of $S$,
i.e., the greatest common divisor of the set $\{n \geq 1 \mid P(S_n = (0,0)) > 0\}$.
Let $\Gamma$ denote the covariance matrix of $S_1-S_0$,
and set  
\begin{equation}\label{eq:Charact}  
    \Charact 
    \coloneqq 
    \frac{4\pi\sqrt{\det\Gamma}}{d}.
\end{equation}
Let $\nu_{\Gamma/2}$ be the two-dimensional Gaussian distribution
with mean $0$ and covariance matrix $\Gamma/2$.

In the following theorem, we identify the distribution of the limiting measure $\Pi_*$.

\begin{thm} \label{thm:limit meas}
    Let $M(dt\,dx\,dw)$ be a Poisson random measure on
    $(0,\infty) \times \RN^2 \times [0,\infty)$ with intensity measure
    \begin{equation}
        \Charact t^{-2} e^{-\Charact w/t}\,dt\,\nu_{\Gamma/2}(dx)\,dw,
    \end{equation}
    and write $\{(t_i,x_i,w_i)\}_{i\in I}$ for its atoms. 
    Then the limiting random measure $\Pi_*$ in Theorem~\ref{thm:main} 
    has the same distribution as $\sum_{i\in I} w_i \delta_{(x_i, t_i)}$.
\end{thm}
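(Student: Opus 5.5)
Since Theorem~\ref{thm:main} gives convergence of $\Pi_N$ to $\Pi_*$ in the vague topology, $\Pi_*$ is determined by its Laplace functional. It therefore suffices to show that for every nonnegative continuous compactly supported $f$ on $\RN^2\times[0,\infty)$,
\begin{equation}
    \lim_{N\to\infty} E\Big[\exp\Big(-\int f\,d\Pi_N\Big)\Big]
    =
    \exp\Big(-\int_0^\infty\!\!\int_{\RN^2}\!\int_0^\infty \big(1-e^{-w f(x,t)}\big)\,\Charact t^{-2}e^{-\Charact w/t}\,dw\,\nu_{\Gamma/2}(dx)\,dt\Big).
\end{equation}
The right-hand side is the Laplace functional of $\sum_i w_i\delta_{(x_i,t_i)}$. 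The $w$-integral equals $\int (1-e^{-wf})\Charact t^{-2}e^{-\Charact w/t}\,dw = t^{-1}\cdot\frac{f t/\Charact}{1+ft/\Charact}$, so the target is
\begin{equation}
    \exp\Big(-\int_0^\infty\!\!\int_{\RN^2}\frac{f(x,t)}{\Charact+t f(x,t)}\,\nu_{\Gamma/2}(dx)\,dt\Big).
\end{equation}

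\textbf{Step 1: cluster decomposition.} Decompose the collisions into clusters on the logarithmic scale. Fix a small $\delta>0$ and call collision times $n<m$ part of the same cluster when $\log m-\log n\le \delta\log N$. The difference walk $Z=S^1-S^2$ has covariance $2\Gamma$. By the local limit theorem, $P(Z_n=0)\sim d/(4\pi\sqrt{\det\Gamma}\,n)=1/(\Charact n)$ on the lattice. Hence the expected number of collisions in the time window $[N^{a},N^{a+da}]$ is $\approx \log N\,da/\Charact$, so the scaled mass per unit of $t$ is of order $1$.

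\textbf{Step 2: law of a single cluster.} Given that a cluster starts at time $n=N^{t}$, condition on the first collision there. Its scaled mass is $(\log N)^{-1}$ times the number of returns of $Z$ to $0$ in the time window $[n,nN^{\delta}]$. Starting from $0$, by the Erdős–Taylor mechanism this count is asymptotically exponential with mean $\approx t\log N/\Charact$, measured on the log scale up to time $N^t$. More precisely, the number of returns before time $n$ is $\approx\mathrm{Exp}$ with mean $\log n/\Charact=t\log N/\Charact$, and returns then escape to larger scales. This explains the weight density $\frac{\Charact}{t}e^{-\Charact w/t}$.

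All collisions in a cluster occur at times $m=n^{1+o(1)}$, and in fact within the same polynomial window. The reason is that after a return at time $m$, the next return happens before $m^{1+\epsilon}$ only with probability $\approx \epsilon$-dependent, and clusters are localized in $\log m/\log N$. Consequently the scaled time of every collision in the cluster is $t+o(1)$. Moreover $S^1_m/\sqrt m$ is asymptotically $S^1_{n'}/\sqrt{n'}$, where $n'$ is the typical scale in the cluster.

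For the spatial part, note that at a collision $S^1=S^2=(S^1+S^2)/2$. The sum $S^1+S^2$ is independent of $Z$ in the Gaussian limit, since the covariance of $(S^1+S^2,S^1-S^2)$ is block diagonal. Therefore, conditionally on $Z$ returning, $S^1_m/\sqrt m\Rightarrow\nu_{\Gamma/2}$, independently of the return structure. One must check that within a cluster the position stays frozen on the $\sqrt m$ scale. This holds because a cluster is dominated by the return structure at its smallest scale, and the later returns occur at times $m\gg n$, whereas the positions decorrelate. Here I would define the cluster's location at its dominant scale and prove that the contribution of mass at decorrelated positions is negligible, using continuity of $f$.

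\textbf{Step 3: Poisson convergence.} Show that the cluster start times and marks converge to a Poisson process. Distinct clusters are separated by log-scale gaps, so the Markov property at cluster starting times gives asymptotic independence of the marks (position and mass). To get the Poisson form, I would expand $E[e^{-\int f d\Pi_N}]$ as a product over time blocks $[N^{k\delta},N^{(k+1)\delta}]$. In each block, $1-E[\,\cdot\,]$ equals the probability of a new cluster ($\approx\delta\cdot$ density) times $E[1-e^{-wf}]$. Letting $N\to\infty$ and then $\delta\to0$ gives the exponential formula.

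\textbf{Main obstacle.} The hardest step is Step 2: identifying the cluster mass law and proving that positions within a cluster are effectively constant, that is, controlling collisions spread across many scales. I would first attempt this through a renewal computation on the Laplace transform, using the Markov property of $Z$ at successive return times together with the uniform local limit theorem.
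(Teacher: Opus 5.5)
\textbf{Verdict: genuine gap in Step~3.} Your reduction to Laplace functionals and your $w$-integral are correct. The heuristic picture is also the right one: exponential cluster masses via Erd\H{o}s--Taylor, Gaussian marks via the LCLT, and decoupling by separation of scales. The problem is Step~3, which is where the Poisson structure has to come from, and as written it does not work.

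\emph{The per-block cluster probability is not $O(\delta)$.} The target intensity has $t$-marginal $\int_0^\infty \Charact t^{-2}e^{-\Charact w/t}\,dw=t^{-1}$, so cluster starts are scale-invariant in log-time. The expected number of clusters in the block $[N^{k\delta},N^{(k+1)\delta}]$ is therefore about $\log\frac{k+1}{k}$, which is $O(\delta)$ only when $k\delta$ stays bounded below. You thus need an early-time cutoff $\delta_0$ (as in Proposition~\ref{prop:early col}) with block width $\ll\delta_0$. You also never derive the rate $dt/t$ of cluster starts. The ratio ``collision density / mean cluster mass'' is a Wald-type heuristic, not an argument.

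\emph{The conditional Laplace factor is not deterministic.} Write $X_k$ for the $f$-weighted scaled mass of block $k$. Then $E[e^{-X_k}\mid\mathcal F_{N^{k\delta}}]$ is not close to a deterministic $1-\delta\,\phi(k\delta)$. If a collision occurred just before $N^{k\delta}$, then $Z_{N^{k\delta}}$ is near $0$, and block $k$ carries non-negligible mass with conditional probability close to $1$. Making the block product rigorous would require:
\begin{itemize}
\item hitting estimates for $Z$ from arbitrary starting points;
\item the conditional law of $S^1_\tau/\sqrt\tau$ at the hitting time $\tau$, given the past;
\item a bound on such boundary events summed over $\asymp 1/\delta$ blocks.
\end{itemize}
``The Markov property at cluster starting times'' does not supply these. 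A cluster start is defined by the \emph{absence} of earlier collisions, and that conditioning on the path could a priori bias both the mark and the mass.

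\textbf{The idea you are missing.} The paper avoids all of this with an exact algebraic identity (Proposition~\ref{prop:chaos}):
\begin{itemize}
\item Expand $\exp\bigl(-\sum_n f_N(S^1_n,n)\mathbf 1_D(\bm S_n)\bigr)=\prod_n\bigl(1+\func_N(S^1_n,n)\mathbf 1_D(\bm S_n)\bigr)$ over ordered times.
\item Group the indices by cluster starting times $a_l$, each cluster covering $(a_l,a_l+\lfloor a_l^{1-\varepsilon}\rfloor]$, and resum inside each cluster.
\end{itemize}
No ``no collision in between'' condition ever appears, and each cluster carries a factor $|\func_N|\le\|f\|_\infty/\log N$. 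Consequently every error term is bounded by the first-moment estimate of Lemma~\ref{lem:col est}. This covers too many clusters, gaps between $a^{1-\varepsilon}$ and $a^{1+\varepsilon}$, positions off the annulus, and the decoupling error of Proposition~\ref{prop:asymp indep}.

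Well-separated clusters then factorize. Using Corollaries~\ref{cor:col num conv} and~\ref{cor:sum to int}, each factor becomes $-\frac{1}{a\log N}\int_{\RN^2}\frac{f(x,t)}{tf(x,t)+\Charact}\,\nu_{\Gamma/2}(dx)$ with $t=\log a/\log N$ (after $\varepsilon\to0$). Summing over ordered $(a_l)$ gives $\frac{1}{L!}(\cdots)^L$, i.e.\ the exponential. The Poisson form appears without ever computing the law of cluster starts.

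\textbf{Step~2 is the easy part, and your cluster window is the wrong one.} A cluster started at $a$ already collects an exponential mass of mean $(1-\varepsilon)t/\Charact$ within time $a^{1-\varepsilon}\ll a$. Over that stretch, Kolmogorov's inequality gives $|S^1_k-S^1_a|\ll\sqrt a$, so position and log-time are frozen. The following stretch, up to time $a+a^{1+\varepsilon}$, carries only $O(\varepsilon)$ expected scaled mass. With the window $[n,nN^\delta]$ you instead mix decorrelated positions and log-times inside one cluster.

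\textbf{A false statement in Step~2.} You claim that after a return at time $m$ the next return occurs before $m^{1+\varepsilon}$ only with small probability. That probability is in fact $1-O(1/\log m)$. What is true, and needed, is that an escape past $m^{1-\varepsilon}$ ends in $(m^{1-\varepsilon},m^{1+\varepsilon}]$ with conditional probability $O(\varepsilon)$.
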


The limiting random measure admits a natural interpretation.
Its temporal component can be understood through renewal theory.
Let $T_0 \coloneqq 0$ and $T_{n+1} \coloneqq \inf\{m > T_n \mid S^1_m = S^2_m\}$.
By the strong Markov property, the successive gaps $(T_{n+1} - T_n)_{n \geq 0}$ between collision times are i.i.d.\ random variables with tail $P(T_1 > u)\sim\Charact/\log u$; see Lemma~\ref{lem:col time tail}.
Thus, standard limit theory for renewal processes with slowly varying tails (cf.\ \cite[Theorem~2.1]{Kabluchko_Marynych_16_Renewal} and \cite[Proposition~4.8(iii)]{Resnick_08_Extrem}) yields
\begin{equation}
    \frac{1}{\log N}
    \sum_{n = 1}^\infty \mathbf{1}_{\{S^1_n=S^2_n\}}
    \delta_{\frac{\log n}{\log N}}
    \xrightarrow[N\to\infty]{\mathrm{d}}
    \sum_{i\in I}w_i\delta_{t_i}
\end{equation}
in the vague topology.
The law of each spatial mark $x_i$ also has a natural explanation.
As shown in Proposition~\ref{prop:IP on col} below, the local central limit theorem implies that the conditional law of $S^1_n/\sqrt{n}$ given $S^1_n=S^2_n$ converges weakly to $\nu_{\Gamma/2}$.
Theorem~\ref{thm:limit meas} shows that the full limiting measure $\Pi_*$ is obtained by assigning to the atoms $t_i$ i.i.d.\ spatial marks $x_i$ with this law, independently of the times $t_i$ and weights $w_i$.

This suggests the following picture: collision times form clusters, with $t_i$ representing the limiting rescaled starting time of a cluster and $w_i$ the limit of the normalized number of collisions within that cluster.
Under the space--time scaling $\phi_N$, the collision sites and times within each cluster collapse to a single point $(x_i,t_i)$.
Moreover, the spatial locations of distinct clusters are asymptotically independent of one another and also of the collision times.

The main difficulty is to prove this asymptotic independence rigorously.
To overcome it, we introduce a new cluster decomposition of the Laplace transform of the collision measure, inspired by chaos expansions for directed polymer models; see Section~\ref{sec:clust decomp}.

The rest of the paper is organized as follows.
Section~\ref{sec:preliminary} collects preliminary estimates for the random walks.
In Sections~\ref{sec:sum integral} and~\ref{sec:limit num col}, we identify the limiting spatial distribution conditioned on a collision and the scaling limit of the number of collisions, respectively.
Section~\ref{sec:conv of Lap} develops the cluster decomposition and uses it to prove convergence of the Laplace transforms.
In Section~\ref{sec:proof}, we complete the proofs of Theorems~\ref{thm:main} and~\ref{thm:limit meas}.
Appendix~\ref{sec:Riemann sum} provides a logarithmic Riemann-sum approximation used in the proof.

\medskip
\textbf{Notation and conventions.}
\begin{enumerate}[leftmargin=*, label=\textup{(\arabic*)}]
    \item
    Unless otherwise stated, all random walks start from the origin,
    and their law and expectation are denoted by $P$ and $E$.
    For a random walk started at $x$, we write $P_x$ and $E_x$.

    \item
    For a measure $\mu$ and a measurable function $f$ for which the
    integral is well defined, we write
    $\mu(f)\coloneqq\int f\,d\mu$.

    \item For a real-valued function $f$ on a set $T$,
    we write $\|f\|_\infty \coloneqq \sup\{|f(x)| \mid x \in T\}$.

    \item 
    For $x = (x_1,\ldots,x_n) \in \RN^n$,
    we denote its Euclidean norm by $|x| = \sqrt{x^2_1+\cdots+x^2_n}$.

    \item
    For $a,b\in\RN$, we write $a\vee b\coloneqq\max\{a,b\}$
    and $a\wedge b\coloneqq\min\{a,b\}$.

    \item
    We write $A\lesssim B$ if $A\leq CB$ for a constant $C\geq1$
    depending only on the jump distribution of $S$.
    We write $A\gtrsim B$ if $B\lesssim A$, and $A\asymp B$ if both
    $A\lesssim B$ and $A\gtrsim B$.
    Additional dependence on parameters $q_1,\ldots,q_k$ is indicated
    by subscripts, as in $\lesssim_{q_1,\ldots,q_k}$.

    \item
    We use parameter-dependent Landau notation.
    The notation $O_{q_1,\ldots,q_k}(1)$ denotes a quantity bounded,
    uniformly in the variables under consideration, by a finite constant
    depending only on parameters $q_1,\ldots,q_k$ and the jump distribution of $S$.
    Similarly, $o_{q_1,\ldots,q_k}(1;x\to x_0)$ denotes a quantity
    converging to zero as $x\to x_0$ with $q_1,\ldots,q_k$ fixed.
    A parameter is omitted from the subscript when the bound or convergence
    is uniform in that parameter.
    For iterated limits, the order is listed from the inner limit to the
    outer one. Thus,
    \begin{equation}
        r_{l,m,n}
        =
        o(1;l\to\infty\, \triangleright\, m\to\infty\, \triangleright\, n\to\infty)
    \end{equation}
    means that
    \begin{equation}
        \limsup_{n\to\infty}
        \limsup_{m\to\infty}
        \limsup_{l\to\infty}|r_{l,m,n}|=0.
    \end{equation}
\end{enumerate}

\section{Preliminary results}
\label{sec:preliminary}

In this section,
we present preliminary results
that are used in the proofs of our main theorems.
We recall the local central limit theorem (LCLT) in Section~\ref{sec:LCLT}.
In Sections~\ref{sec:number col} and~\ref{sec:approx indep},
we establish several consequences of the LCLT.

\subsection{The local central limit theorem}
\label{sec:LCLT}

To state the local central limit theorem,
we introduce several pieces of notation.
Define the positive definite quadratic form $Q\colon \RN^2 \to [0,\infty)$ by setting $Q(x) \coloneqq \langle x, \Gamma^{-1}x \rangle$,
where $\langle \cdot,\cdot \rangle$ denotes the usual inner product on $\RN^2$.
We will often use the following elementary comparison:
\begin{equation} \label{eq:Q asymp}
    Q(x) \asymp |x|^2,\qquad \forall x \in \RN^2
\end{equation}
(cf. \cite[Proposition~1.1.1(c)]{Lawler_Limic_10_Random}).
Since the random walk has period $d$,
we have a natural decomposition $\ZN^2=\bigsqcup_{i=0}^{d-1}\ZN_i^2$,
where, for each $i \in \{0,\ldots,d-1\}$,
\begin{equation}\label{def: ZNi}
    \ZN_i^2
    \coloneqq
    \bigl\{x \in \ZN^2 : P(S_m=x)>0
    \text{ for some }m \in \ZN_{\geq 0}\text{ such that }m\equiv i \pmod d\bigr\}.
\end{equation}
For convenience,
for each $n \in \ZN_{\geq 0}$, we set
$\ZN_n^2\coloneqq\ZN_i^2$ whenever $n\equiv i\pmod d$.

The following standard lemma allows us to transfer results
from the aperiodic case $d=1$ to the periodic case $d>1$
(e.g. \cite[Proof of Equation~(5.9)]{Dembo_Peres_Rosen_Zeitouni_01_Thick}).
For completeness, we include a brief proof.

\begin{lem} \label{lem:lattice transform}
    There exists an invertible $2\times2$ matrix $B$ such that
    $\ZN^2_0 = B \ZN^2$ and $|\det B| = d$.
\end{lem}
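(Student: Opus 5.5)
The plan is to show that $\ZN^2_0$ is a subgroup of $\ZN^2$ of index exactly $d$; the structure theorem for subgroups of $\ZN^2$ then gives the matrix $B$.

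First we identify $\ZN^2_0$ with a group. Let $\mathcal{S}\coloneqq\{x\in\ZN^2 : P(S_m=x)>0 \text{ for some } m\geq0\}$ be the set of points reachable from the origin, and let $\mathcal{T}\coloneqq\{(x,m)\in\ZN^2\times\ZN_{\geq0} : P(S_m=x)>0\}$ be the set of reachable space--time points. The set $\mathcal{T}$ is closed under addition, because $P(S_{m+m'}=x+x')\geq P(S_m=x)P(S_{m'}=x')$. Irreducibility says that every $x\in\ZN^2$ is reachable at some time. In particular every $x$ and $-x$ are reachable, so concatenating a path to $x$ with a path back to $0$ gives a loop of positive length. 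Hence for each $(x,m)\in\mathcal T$ there is $m'\ge0$ with $(-x,m')\in\mathcal T$, and then $(0,m+m')\in\mathcal T$. This forces $d\mid m+m'$, since $d$ is the gcd of the return times.

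From this we get the following. If $(x,m)\in\mathcal T$ and $(x,\tilde m)\in\mathcal T$, then adding $(-x,m')$ to each shows that $m+m'$ and $\tilde m+m'$ are both multiples of $d$. Therefore $m\equiv\tilde m \pmod d$, which is exactly the statement that the sets $\ZN^2_i$ are disjoint. It follows that the map $\pi\colon\ZN^2\to\ZN/d\ZN$ sending $x$ to the residue of any $m$ with $(x,m)\in\mathcal T$ is well defined. Additivity of $\mathcal T$ makes $\pi$ a group homomorphism.

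Next we show $\pi$ is surjective. Since $d$ is the gcd of the return times, by Bézout some integer combination of return times equals $d$. Returns are reachable at their own times, and return times are closed under addition, so all large multiples of $d$ are return times. It is easier still to note that $\pi$ already hits $0$ and that the step support generates residues. Indeed, if every step $y$ in the support had $\pi(y)=0$, then every $m\geq1$ with $P(S_m=\cdot)>0$ would satisfy $m\equiv0 \pmod d$. Since $S_1$ is reachable at time $1$, this gives $d=1$. In general $\pi(y)=1$ for every step $y$ in the support, and the image of $\pi$ is a subgroup containing $1$, so $\pi$ is onto.

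Hence $\ZN^2_0=\ker\pi$ is a subgroup of index $d$ in $\ZN^2$, and it has rank $2$ because its index is finite. By the elementary divisor theorem, $\ker\pi=B\ZN^2$ for some integer matrix $B$ whose columns form a basis of the subgroup, and $|\det B|=[\ZN^2:B\ZN^2]=d$. In particular $\det B\neq0$, so $B$ is invertible over $\RN$.

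The only delicate step is well-definedness of $\pi$, which requires the round-trip argument from irreducibility. This step would fail without the assumption $P(S_n=x)>0$ for every $x$, and the rest of the argument is routine lattice theory.
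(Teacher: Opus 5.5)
Your argument is correct and has the same overall skeleton as the paper's proof. In both, one shows that $\ZN^2_0$ is a subgroup of $\ZN^2$ of index $d$, and then invokes the basis theorem for sublattices together with $|\det B|=[\ZN^2:B\ZN^2]$.

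Where you differ is in how the group structure and the index are obtained. The paper asserts that irreducibility makes $\ZN^2_0$ a sublattice and cites Spitzer for the index being $d$, and Cassels for the basis and the determinant. You instead prove both directly. You build the period homomorphism $\pi\colon\ZN^2\to\ZN/d\ZN$ from the additivity of $\mathcal T$ and the round-trip argument. Then $\ZN^2_0=\ker\pi$, and the index is $d$ by surjectivity and the first isomorphism theorem.

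This buys a self-contained proof. As a by-product, it also justifies the disjointness of the classes $\ZN^2_i$, which the paper uses without comment when writing $\ZN^2=\bigsqcup_{i=0}^{d-1}\ZN^2_i$. The paper's version is shorter, at the cost of relying on external references.

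Your surjectivity paragraph could be trimmed. The B\'ezout remark about large multiples of $d$ and the hypothetical case in which every step has $\pi(y)=0$ are unnecessary. For any $y$ with $P(S_1=y)>0$ we have $(y,1)\in\mathcal T$, so $\pi(y)=1$, which already generates $\ZN/d\ZN$.
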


\begin{proof}
    The irreducibility implies that $\ZN^2_0$ is a $2$-dimensional sublattice of $\ZN^2$, that is, $\ZN^2_0=\{mv_1+nv_2 \mid m,n\in\ZN\}$ for some linearly independent vectors $v_1,v_2\in\ZN^2$.
    By \cite[Theorem~1 in Chapter~I]{Cassels_97_Introduction},
    there exists a $2\times2$ matrix $B$ such that $\ZN^2_0 = B\ZN^2$.
    Since the index of $\ZN^2_0$ as a subgroup of $\ZN^2$ is $d$
    (see \cite[Remark following P1 in Section~5]{Spitzer_76_Principles}),
    it follows from \cite[Lemma~1 in Chapter~I]{Cassels_97_Introduction} that $|\det B| = d$.
    In particular, $B$ is invertible, which completes the proof.
\end{proof}

We use the following form of the local central limit theorem (LCLT).
Define 
\begin{equation} \label{eq:c_*}
    c_* 
    \coloneqq \frac{d}{2\pi \sqrt{\det \Gamma}}.
\end{equation}

\begin{lem} [{LCLT}]\label{lem:LCLT}
    There exists a non-increasing sequence $(\rho_n)_{n \geq 1}$ of positive numbers that converges to $0$ as $n\to \infty$ such that,
    for any $n \geq 1$,
    \begin{equation} \label{eq:LCLT 1}
        \left|
            P(S_n = x)
            -
            \frac{c_{*}}{n} e^{-Q(x)/(2n)}
        \right|
        \leq
        \frac{\rho_n}{n \vee |x|^{2}},
        \qquad 
        \forall x \in \ZN^2_n
    \end{equation}
    and, in particular,
    \begin{equation} \label{eq:LCLT 2}
        P(S_n = x) \lesssim \frac{1}{n \vee |x|^2},
        \qquad 
        \forall x \in \ZN^2.
    \end{equation}
\end{lem}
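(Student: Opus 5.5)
The plan is to reduce to the aperiodic case via Lemma~\ref{lem:lattice transform} and then import the standard LCLT for aperiodic walks with finite second moment, such as \cite[Theorem~2.3.9 and Theorem~2.3.10]{Lawler_Limic_10_Random}.

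\emph{Aperiodic case, $d=1$.} Here $c_*=1/(2\pi\sqrt{\det\Gamma})$, so $\frac{c_*}{n}e^{-Q(x)/(2n)}$ is exactly the Gaussian density with covariance $n\Gamma$ evaluated at $x$. Under mean zero and a finite second moment, the cited theorems give two error bounds, uniform in $x\in\ZN^2$:
\[
\bigl|P(S_n=x)-\bar p_n(x)\bigr|\le \frac{\rho'_n}{n}
\qquad\text{and}\qquad
\bigl|P(S_n=x)-\bar p_n(x)\bigr|\le \frac{\rho''_n}{|x|^2},
\]
with $\rho'_n,\rho''_n\to0$. Taking the minimum of the two bounds gives $\rho_n/(n\vee|x|^2)$. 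Setting $\rho_n\coloneqq\sup_{m\ge n}(\rho'_m\vee\rho''_m)$ makes the sequence non-increasing, positive and null. (Add $1/n$ if positivity is needed.)

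\emph{Periodic case, $d>1$.} I would use the matrix $B$ from Lemma~\ref{lem:lattice transform}. Fix $x_0\in\ZN^2$ with $P(S_1=x_0)>0$. The steps of the walk all lie in $x_0+\ZN_0^2$, so $S_n\in nx_0+\ZN_0^2$.
\begin{itemize}[leftmargin=*]
\item First consider $n=md$. The walk $Y_m\coloneqq B^{-1}S_{md}$ lives on $\ZN^2$. It has mean zero, covariance $d\,B^{-1}\Gamma B^{-\top}$, and a finite second moment.
\item I then need $Y$ to be irreducible and aperiodic on $\ZN^2$. This follows from the definition of $d$ together with the fact that $\ZN_0^2$ is generated by the support of the $d$-step distributions.
\item Applying the aperiodic LCLT to $Y$ at the point $B^{-1}x$, the Gaussian term becomes $\frac{1}{2\pi m\,d\sqrt{\det\Gamma}/|\det B|}e^{-Q(x)/(2md)}$. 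Since $|\det B|=d$, this equals $\frac{d}{2\pi\sqrt{\det\Gamma}\,n}e^{-Q(x)/(2n)}$, which is exactly $\frac{c_*}{n}e^{-Q(x)/(2n)}$.
\item The error bound transfers to the original coordinates because $|B^{-1}x|\asymp|x|$.
\item For general $n=md+r$ with $0<r<d$, condition on the first $r$ steps, which land in $\ZN^2_r$. For a finite-range first segment one could simply apply the result to $x-S_r$, but $S_r$ is unbounded. Instead I would write $P(S_n=x)=\sum_y P(S_r=y)\,P(S_{md}=x-y)$ and split the sum.
  \begin{itemize}
  \item For $|y|\le\varepsilon\sqrt n$, the Gaussian changes by $o(1)/n$ in the $1/n$ regime.
  \item The tail $\sum_{|y|>\varepsilon\sqrt n}$ is handled using $E|S_r|^2<\infty$.
  \end{itemize}
\end{itemize}

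\emph{Main obstacle.} The delicate point is preserving the $|x|^{-2}$ error in the shifted case. In the regime $|x|^2\ge n$, the contribution of large $y$ must be controlled by $\rho/|x|^2$. This uses $P(|S_r|>|x|/2)\le 4E|S_r|^2/|x|^2$ combined with the crude bound $P(S_{md}=\cdot)\lesssim 1/n$. That combination is too weak by itself, so I would instead use the second-moment bound
\[
\sum_y P(S_r=y)\,|y|^2\,\mathbf 1_{\{|y|>|x|/2\}}=o(1)\quad\text{as }|x|\to\infty.
\]
This yields $o(1)/|x|^2$, which is uniform for $|x|^2\ge n\to\infty$.

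Finally, \eqref{eq:LCLT 2} follows at once from \eqref{eq:LCLT 1}. We have $e^{-Q(x)/(2n)}/n\lesssim 1/(n\vee|x|^2)$ by \eqref{eq:Q asymp} and $\sup_u ue^{-u}<\infty$, and $P(S_n=x)=0$ for $x\notin\ZN^2_n$.
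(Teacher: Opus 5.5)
Your proposal is correct and follows the paper's proof. The aperiodic case is quoted from Lawler--Limic (Theorems~2.3.9 and~2.3.10). Times in $d\mathbb{Z}$ are handled through the aperiodic walk $B^{-1}S_{dk}$ with $|\det B|=d$. The remaining residue classes go through $P(S_{dk+i}=x)=\sum_y P(S_i=y)P(S_{dk}=x-y)$ together with the finite second moment of $S_i$.

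The only difference is bookkeeping in that last step. Rather than splitting the $y$-sum, the paper uses, with $p_s(z)=s^{-1}e^{-Q(z)/(2s)}$, the weighted bounds $(s\vee|x|^2)|p_s(x-y)-p_s(x)|\lesssim |y|/\sqrt{s}+|y|^2/s$, $(s\vee|x|^2)|p_{s+h}(x)-p_s(x)|\lesssim h/s$ and $(dk\vee|x|^2)/(dk\vee|x-y|^2)\le 2+2|y|^2/(dk)$, which treat all $y$ at once. Incidentally, the Chebyshev-plus-$1/n$ bound you dismiss as too weak already gives $O(n^{-1})\,|x|^{-2}$, which is enough.
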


{
\begin{proof}
    When the jump distribution is aperiodic,
    \eqref{eq:LCLT 1} follows from \cite[Theorems~2.3.9 and~2.3.10]{Lawler_Limic_10_Random}.
    The periodic case follows from the standard reduction to the aperiodic case, as in \cite[Proof of Theorem~2.1.3]{Lawler_Limic_10_Random}.
    Indeed, 
    let $B$ be the $2\times2$ matrix as in Lemma~\ref{lem:lattice transform}.
    Then the definition of $\ZN^2_0$ implies that $(B^{-1}S_{dk})_{k \geq 0}$ is an aperiodic random walk on $\ZN^2$.
    Thus, the result at times that are multiples of $d$ follows from the aperiodic case and Lemma~\ref{lem:lattice transform}.
    For the remaining residue classes, the Markov property gives, for any $x \in \ZN^2$ and $k \geq 1$,
    \begin{equation} \label{eq:Markov decomp}
        P(S_{dk+i}=x)
        =
        \sum_{y\in\ZN^2}
        P(S_{dk}=x-y)P(S_i=y),
        \qquad \forall i\in\{0,\ldots,d-1\}.
    \end{equation}
    Write $p_s(z)\coloneqq s^{-1}e^{-Q(z)/(2s)}$ for $s>0$ and $z\in\RN^2$.
    The aperiodic estimate gives
    \begin{equation} \label{eq:aperiodic LCLT}
        |P(S_{dk}=z)-c_* p_{dk}(z)|\leq\frac{\tilde{\rho}_k}{dk\vee|z|^2},
        \qquad z\in\ZN_0^2
    \end{equation}
    for some decreasing sequence $(\tilde{\rho}_k)_{k \geq 1}$ of positive numbers converging to $0$ as $k\to \infty$.
    Recall that $Q(z) =\langle z, \Gamma^{-1}z \rangle$.
    Since $\nabla Q(z) = 2\Gamma^{-1}z$,
    it follows that 
    \begin{equation}
        \nabla p_s(z)
         =
        -\frac{1}{2}s^{-2}e^{-Q(z)/(2s)} \cdot \nabla Q(z)
        =
        -s^{-2} e^{-Q(z)/(2s)} \cdot \Gamma^{-1}z.
    \end{equation}
    Using that $Q(z) \gtrsim |z|^2$ from \eqref{eq:Q asymp}, $|\Gamma^{-1}z| \lesssim |z|$,  and $re^{-r^2} \lesssim  e^{-r^2/2}$ for all $r \geq 0$ ,
    we obtain, for some $c > 0$,
    \begin{equation} \label{eq:spatial deriv est}
        |\nabla p_s(z)|
        \lesssim s^{-3/2}
        e^{-c |z|^2/s},
        \qquad \forall s>0,\, z \in \RN^2.
    \end{equation}
    Similarly, we have 
    \begin{equation} 
    \partial_s p_s(z)
    =
    s^{-2}
    \left(\frac{Q(z)}{2s}-1\right)
    e^{-Q(z)/(2s)}.
    \end{equation}
    Using $Q(z)\gtrsim  |z|^2$ again and $r e^{-r} \lesssim e^{-r/2}$ for all $r \geq 0$,
    we obtain, for some $c' > 0$,
    \begin{equation} \label{eq:temporal deriv est}
        |\partial_s p_s(z)|
        \lesssim 
        s^{-2}
        e^{-c' |z|^2/s},
        \qquad \forall s>0,\, z \in \RN^2.
    \end{equation}
    These bounds imply the following continuity estimates for all
    $s>0$, $x,y\in\RN^2$, and $0\leq h\leq s$:
    \begin{align}
        (s\vee|x|^2)|p_s(x-y)-p_s(x)|
        &\lesssim \frac{|y|}{\sqrt{s}}+\frac{|y|^2}{s},
        \label{eq:spatial conti}\\
        (s\vee|x|^2)|p_{s+h}(x)-p_s(x)|
        &\lesssim \frac{h}{s}.
        \label{eq:temporal conti}
    \end{align}
    Indeed, if $|y|\leq\frac12\sqrt{s\vee|x|^2}$, the mean value theorem and \eqref{eq:spatial deriv est} yield
    \begin{equation}
        (s\vee|x|^2)|p_s(x-y)-p_s(x)|
        \leq |y|(s\vee|x|^2)
        \sup_{0\leq\theta\leq1}|\nabla p_s(x-\theta y)|
        \lesssim\frac{|y|}{\sqrt{s}},
    \end{equation}
    where we use $\inf_{0 \leq \theta \leq 1} |x-\theta y|\geq|x|/2$ when $|x|>\sqrt{s}$
    and use $\sup_z |\nabla p_s(z)| \lesssim s^{-3/2}$ when $|x| \leq \sqrt{s}$.
    On the other hand, if $|y| > \frac12\sqrt{s\vee|x|^2}$, then the bound $\sup_z p_s(z)\lesssim s^{-1}$ gives $(s\vee|x|^2)|p_s(x-y)-p_s(x)|\lesssim |y|^2/s$.
    Hence, we obtain \eqref{eq:spatial conti}.
    The temporal estimate~\eqref{eq:temporal conti} follows from a similar argument using the mean value theorem and \eqref{eq:temporal deriv est}.

    For any $x,y \in \ZN^2$ and $k\geq1$, since $|x|^2 \leq 2|x-y|^2 + 2|y|^2$, we have
    \begin{equation}
        \frac{dk\vee|x|^2}{dk\vee|x-y|^2}
        \leq 2+\frac{2|y|^2}{dk}.
    \end{equation}
    Combining this with \eqref{eq:aperiodic LCLT}, \eqref{eq:spatial conti}, \eqref{eq:temporal conti}, and the triangle inequality, we obtain, for any $k \geq 1$, $i \in \{0,\ldots,d-1\}$, $x \in \ZN^2_{dk+i}$, and $y \in \ZN^2_i$,
    \begin{equation}
        (dk\vee|x|^2)|P(S_{dk}=x-y)-c_*p_{dk+i}(x)|
        \lesssim \tilde{\rho}_k\left(1+\frac{|y|^2}{dk}\right)
        +\frac{|y|}{\sqrt{dk}}+\frac{|y|^2+i}{dk}.
    \end{equation}
    From this and \eqref{eq:Markov decomp}, it follows that
    \begin{equation}
        (dk\vee|x|^2)|P(S_{dk+i}=x) - c_* p_{dk+i}(x)|
        \lesssim \tilde{\rho}_k\left(1+\frac{E[|S_i|^2]}{dk}\right) + \frac{E[|S_i|]}{\sqrt{dk}} + \frac{E[|S_i|^2]+i}{dk}.
    \end{equation}
    Let $\rho'_k$ denote the supremum of the right-hand side over $i\in\{0,\ldots,d-1\}$.
    Since $S_i$ has a finite second moment for each $i$, we have that $\rho'_k \to 0$ as $k \to \infty$.
    In particular, if we set $\rho''_k \coloneqq \sup_{m \geq k} \rho'_m$, then $\rho''_k$ is non-increasing, converges to $0$ as $k \to \infty$, and satisfies
    \begin{equation}
        |P(S_{dk+i}=x)-c_*p_{dk+i}(x)|
        \lesssim \frac{\rho''_k}{dk\vee|x|^2},
        \qquad \forall x \in \ZN^2_{dk+i},\, i \in \{0,\ldots,d-1\}.  
    \end{equation}
    Since $(dk+i)\vee|x|^2\asymp dk\vee|x|^2$, by setting $\rho_{dk+i} \coloneqq C\rho''_k$ for a sufficiently large constant $C$, we obtain \eqref{eq:LCLT 1} for $n\geq d$.
    For $1\leq n<d$, the estimate \eqref{eq:LCLT 1} holds by taking $\rho_n$ sufficiently large.
    This completes the proof. 
\end{proof}
}

\subsection{Moment estimates for the number of collisions}
\label{sec:number col}

Using the LCLT, we give an upper bound for the moment of the number of collisions in Lemma~\ref{lem:col est} below.
Recall that $S^1$ and $S^2$ are independent copies of $S$.

\begin{lem} \label{lem:col est}
    There exists a constant $c_1 \in (0,\infty)$ such that,
    for any integers $0 = t_0 < t_1 < \cdots < t_m < \infty$,
    \begin{equation}\label{eq:col est 1}
        \sup_{x,y \in \ZN^2}
        E_{(x,y)}\!
        \left[
            \prod_{i=1}^m
            \mathbf{1}_{\{S^1_{t_i} = S^2_{t_i}\}}
        \right]
        \leq     
        c_1^m
        \prod_{i=1}^m 
        \frac{1}{t_i - t_{i-1}}.
    \end{equation}
    As a consequence, 
    there exists a constant $c_2 \in (0,\infty)$ such that, 
    for any integers $N \geq 2$ and $m \leq N$,
    \begin{equation} \label{eq:col est 2}
        \sum_{0 < t_1 < \cdots < t_m \leq N} 
        \sup_{x,y \in \ZN^2}
        E_{(x,y)}\!
        \left[
            \prod_{i=1}^m
            \mathbf{1}_{\{S^1_{t_i} = S^2_{t_i}\}}
        \right]
        \leq    
        (c_2 \log N)^m.
    \end{equation}
\end{lem}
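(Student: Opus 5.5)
We prove \eqref{eq:col est 1} by reducing to the difference walk and then applying the Markov property repeatedly. Set $Z_n \coloneqq S^1_n - S^2_n$. Under $P_{(x,y)}$, $Z$ is a random walk started at $x-y$. Its increments are differences of two independent copies of $S_1-S_0$, so they are i.i.d., have mean zero, and have finite second moment. On the event $\{S^1_{t_i}=S^2_{t_i}\}$ we have $Z_{t_i}=0$. Hence the expectation in \eqref{eq:col est 1} equals
\begin{equation}
    P_{x-y}\bigl(Z_{t_1}=0,\ldots,Z_{t_m}=0\bigr)
    =
    P_{x-y}(Z_{t_1}=0)\prod_{i=2}^m P_0\bigl(Z_{t_i-t_{i-1}}=0\bigr),
\end{equation}
by the Markov property of $Z$ at the times $t_1,\ldots,t_{m-1}$.

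Next we bound each factor uniformly in the starting point, rather than applying Lemma~\ref{lem:LCLT} to $Z$ itself: $Z$ need not be irreducible on $\ZN^2$, so the LCLT is not directly available for it. For any $z\in\ZN^2$ and $n\geq1$, independence gives
\begin{equation}
    P_z(Z_n=0)
    =
    \sum_{w\in\ZN^2} P(S^1_n = w - z_1)\,P(S^2_n=w-z_2),
\end{equation}
where $z=z_1-z_2$ is any decomposition of the starting point. We bound one factor by $\sup_u P(S_n=u)\lesssim 1/n$, using \eqref{eq:LCLT 2} with $n\vee|u|^2\geq n$. The remaining factor sums to $1$. Thus $\sup_z P_z(Z_n=0)\leq c_1/n$ for an absolute constant $c_1$, and inserting this bound into the product yields \eqref{eq:col est 1}. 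Note that the supremum over $(x,y)$ is harmless: the first factor is bounded uniformly in the starting point, and the later factors start from $0$.

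For \eqref{eq:col est 2}, the supremum of the product is at most the product of the bounds from \eqref{eq:col est 1}. We sum over ordered times and substitute $s_i=t_i-t_{i-1}\geq1$. The constraint $t_m\leq N$ forces each $s_i\leq N$, so
\begin{equation}
    \sum_{0<t_1<\cdots<t_m\leq N}\prod_{i=1}^m\frac{c_1}{t_i-t_{i-1}}
    \leq
    c_1^m\Bigl(\sum_{s=1}^N\frac1s\Bigr)^m
    \leq
    \bigl(c_1(1+\log N)\bigr)^m
    \leq
    (c_2\log N)^m,
\end{equation}
with $c_2 \coloneqq c_1(1+1/\log 2)$, using $N\geq2$. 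The hypothesis $m\leq N$ is not needed beyond guaranteeing that the sum is nonempty.

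None of these steps is a serious obstacle. The only point needing care is the uniform bound $\sup_z P_z(Z_n=0)\lesssim 1/n$. It must not depend on irreducibility or periodicity of $Z$, which is why we apply the convolution identity with the uniform bound \eqref{eq:LCLT 2} for $S$ instead of the LCLT for $Z$.
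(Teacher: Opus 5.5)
Your proof is correct and follows essentially the same route as the paper: the convolution bound $P_{(x,y)}(S^1_n=S^2_n)=\sum_{w}P(S_n=w-x)P(S_n=w-y)\le\sup_u P(S_n=u)\lesssim 1/n$ from \eqref{eq:LCLT 2}, iterated via the Markov property, and then the substitution $s_i=t_i-t_{i-1}$ to bound the ordered sum by $c_1^m\bigl(\sum_{s\le N}1/s\bigr)^m$. Routing the Markov step through the difference walk $Z$ instead of the pair $\bm{S}$ is only a cosmetic difference.
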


\begin{proof}
    By independence, translation invariance, and Lemma~\ref{lem:LCLT},
    for any $n \geq 1$ and $x,y \in \ZN^2$,
    \begin{align}
        P_{(x,y)}(S^1_n = S^2_n)
        &=
        \sum_{z \in \ZN^2} P(S_n = z-x)P(S_n = z-y)\\
        &\leq
        \sup_{u \in \ZN^2} P(S_n = u)\\
        &\lesssim \frac{1}{n}.
    \end{align}
    Using the Markov property and applying the above bound successively, we obtain \eqref{eq:col est 1}.
    The second result~\eqref{eq:col est 2} follows  from \eqref{eq:col est 1} 
    combined with
    $\sum_{t=1}^N t^{-1} \lesssim \log N$ and 
    \begin{equation} 
        \sum_{0 = t_0 < t_1 < \cdots < t_m \leq N} 
        \prod_{i=1}^m 
        \frac{1}{t_i - t_{i-1}}    
        \leq 
        \left(
            \sum_{t=1}^N \frac{1}{t}
        \right)^m
        \leq   
        (1+\log N)^m.
    \end{equation}
    This completes the proof.
\end{proof}

\subsection{Approximate independence at separated times}
\label{sec:approx indep}

At well-separated times, a random walk should look almost independent of its past. 
In this subsection, we give a quantitative estimate of this heuristic for collisions of the two walks;
see Proposition~\ref{prop:asymp indep} below.

Define $\bm{S}_n \coloneqq (S^1_n, S^2_n)$ for $n \geq 0$.
Then $\bm{S} = (\bm{S}_n)_{n \geq 0}$ is a random walk on $\ZN^4$. Define
\begin{equation} \label{eq: dfn of D}
    D \coloneqq \{(x, x) \in \ZN^4 \mid x \in \ZN^2\}.
\end{equation}
Note that a collision of $S^{1}$ and $S^{2}$ is equivalent to a visit to $D$ by $\bm{S}$.

\begin{prop} \label{prop:asymp indep}
Recall the sequence  $(\rho_N)_{N \geq 1}$ from Lemma~\ref{lem:LCLT}.    Let $r_0 \in (1, \infty)$.  
    For any $M, N \in \NN$ with $M > 2N$ and any bounded function $f\colon \ZN^4 \to \RN$ supported on $D$,
    almost surely on the event $\{|\bm{S}_N| \leq r_0\sqrt{N}\}$,
    \begin{equation}
        \left|
            E\!\left[ f(\bm{S}_M)\, \mathbf 1_{\{|\bm{S}_M| \leq r_0 \sqrt{M}\}}\, |\, \bm{S}_N \right]
            -
            E\!\left[ f(\bm{S}_M)\, \mathbf 1_{\{|\bm{S}_M| \leq r_0\sqrt{M}\}}\right]
        \right|
        \lesssim_{r_0}
        \frac{ \|f\|_\infty}{M}
        \left(
            \sqrt{\frac{N}{M}} + \rho_N
        \right).
        \end{equation}
\end{prop}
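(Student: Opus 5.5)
By the Markov property, on $\{\bm S_N=(a,b)\}$ with $|(a,b)|\le r_0\sqrt N$ the conditional expectation equals
\[
\sum_{z}f(z,z)\,\mathbf 1_{\{|(z,z)|\le r_0\sqrt M\}}\,P(S_{M-N}=z-a)\,P(S_{M-N}=z-b).
\]
The unconditional expectation is the same sum with $P(S_M=z)^2$. So it suffices to bound
\[
\sum_{|z|\lesssim r_0\sqrt M}\bigl|P(S_{M-N}=z-a)P(S_{M-N}=z-b)-P(S_M=z)^2\bigr|
\]
by a constant (depending on $r_0$) times $M^{-1}(\sqrt{N/M}+\rho_N)$, and then multiply by $\|f\|_\infty$. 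The periodicity needs a little care. A term is nonzero only if $z-a\in\ZN^2_{M-N}$, $z\in\ZN^2_M$, and so on. Since $P(S_N=a)>0$ forces $a\in\ZN^2_N$, the supports match, and the LCLT applies to every nonzero term. For $z$ outside the relevant sublattice both products vanish simultaneously.

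Write $n:=M-N\ge M/2$. By Lemma~\ref{lem:LCLT}, for $z-a\in\ZN^2_n$,
\[
P(S_n=z-a)=c_*p_n(z-a)+O\bigl(\rho_n/(n\vee|z-a|^2)\bigr)=c_*p_n(z-a)+O(\rho_N/M),
\]
using that $\rho$ is non-increasing and $n\ge N$. Similarly $P(S_M=z)=c_*p_M(z)+O(\rho_N/M)$. Both probabilities are $\lesssim 1/M$, so each product differs from $c_*^2p_n(z-a)p_n(z-b)$, respectively $c_*^2p_M(z)^2$, by $O(\rho_N/M^2)$. The sum has $\lesssim r_0^2M$ relevant terms, since the domain is a ball of radius $\lesssim r_0\sqrt M$. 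This gives an error $\lesssim_{r_0}\rho_N/M$.

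The Gaussian comparison is the main term and uses \eqref{eq:spatial conti} and \eqref{eq:temporal conti}. We have $|a|,|b|\le r_0\sqrt N$ and $N/n\le 2N/M$, so
\[
|p_n(z-a)-p_n(z)|\lesssim n^{-1}\Bigl(\tfrac{|a|}{\sqrt n}+\tfrac{|a|^2}{n}\Bigr)\lesssim_{r_0}M^{-1}\sqrt{N/M}.
\]
Here I use $N/M\le\sqrt{N/M}$ because $N<M$. Also
\[
|p_n(z)-p_M(z)|\lesssim n^{-1}\cdot N/n\lesssim M^{-1}\,N/M.
\]
With $p\lesssim 1/M$, each product difference is $\lesssim_{r_0}M^{-2}\sqrt{N/M}$. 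Summing over $\lesssim r_0^2M$ sites gives $\lesssim_{r_0}M^{-1}\sqrt{N/M}$.

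Combining the two steps yields the claim. The only delicate point is the lattice bookkeeping: checking that the LCLT is applied only at points of $\ZN^2_n$ and $\ZN^2_M$, and that sites where one product vanishes and the other does not cannot occur. The rest is routine. The restriction to $|\bm S_M|\le r_0\sqrt M$ is what keeps the site count at $O(M)$. Without it one would need the $|x|^{-2}$ tails, which would give a logarithmic loss.
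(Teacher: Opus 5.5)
Your argument is correct and is essentially the paper's proof: the Markov decomposition over $z\in\ZN^2_M$ with matching periodicity classes, the LCLT replacement at cost $O(\rho_N/M)$, the Gaussian comparison giving $M^{-1}\sqrt{N/M}$, and summation over the $O_{r_0}(M)$ lattice sites in the ball. The only difference is the order of operations. The paper first splits the product via $|ab-c^2|\le b|a-c|+c|b-c|$ and then bounds the single-walk differences $|P(S_{M-N}=y-x_i)-P(S_M=y)|$ through Lemmas~\ref{lem:HK diff} and~\ref{lem:RW HK diff}, whereas you pass to Gaussians first and compare the Gaussian products using the spatial and temporal continuity estimates from the proof of Lemma~\ref{lem:LCLT}.
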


To prove the above proposition,
we present two lemmas.

\begin{lem}\label{lem:HK diff}
    For all $r_0 > 0$, $M, N \in \NN$, and $x,y\in\mathbb{Z}^2$ satisfying
    $M> 2N$ and $|x|\leq r_0 \sqrt{N}$, one has
    \begin{equation}
    \left|
    (M-N)^{-1}e^{-Q(y-x)/(2(M-N))}
    -
    M^{-1}e^{-Q(y)/(2M)}
    \right|
    \lesssim_{r_0}
    M^{-1}
    \sqrt{\frac{N}{M}}.
    \end{equation}
\end{lem}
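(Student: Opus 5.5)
The natural plan is to use the notation $p_s(z)=s^{-1}e^{-Q(z)/(2s)}$ from the proof of Lemma~\ref{lem:LCLT} and to split the difference into a temporal part and a spatial part:
\begin{equation}
    |p_{M-N}(y-x)-p_M(y)|
    \leq
    |p_{M-N}(y-x)-p_{M}(y-x)|
    +
    |p_M(y-x)-p_M(y)|.
\end{equation}
We then bound each part by $M^{-1}\sqrt{N/M}$. This is possible because $M>2N$ gives $M-N\geq M/2$ and $N/M<1/2$, so all scales involved are comparable to $M$.

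For the temporal part we take $s=M-N$ and $h=N$. Then $h\leq s$, and \eqref{eq:temporal conti} gives
\begin{equation}
    |p_{M-N}(z)-p_M(z)|\lesssim \frac{N}{(M-N)\bigl((M-N)\vee|z|^2\bigr)}\leq \frac{4N}{M^2}.
\end{equation}
Alternatively, one can apply the mean value theorem with \eqref{eq:temporal deriv est} directly: $|\partial_s p_s|\lesssim s^{-2}\lesssim M^{-2}$ on $[M-N,M]$. Since $N/M\leq\sqrt{N/M}$, this part is at most a constant times $M^{-1}\sqrt{N/M}$.

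For the spatial part we use the gradient bound $\sup_z|\nabla p_M(z)|\lesssim M^{-3/2}$ from \eqref{eq:spatial deriv est}. The mean value theorem then gives
\begin{equation}
    |p_M(y-x)-p_M(y)|\lesssim |x|M^{-3/2}\leq r_0\sqrt{N}\,M^{-3/2}=r_0 M^{-1}\sqrt{N/M}.
\end{equation}
Equivalently, \eqref{eq:spatial conti} gives a bound of order $M^{-1}(|x|/\sqrt M+|x|^2/M)$, and $|x|^2/M\leq r_0^2 N/M\leq r_0^2\sqrt{N/M}$. The dependence on $r_0$ enters only here.

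There is no real obstacle. The one point needing care is to use the uniform-in-$z$ bounds, namely $\sup_z$ of the derivatives, rather than the weighted $(s\vee|x|^2)$ form. This matters because $y$ is arbitrary and the target bound carries no spatial decay in $y$. Adding the two parts gives the claim with implicit constant $\lesssim 1+r_0+r_0^2$.
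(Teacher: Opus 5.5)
Your proof is correct and follows essentially the paper's argument: a triangle inequality, then the mean value theorem with the derivative bounds \eqref{eq:spatial deriv est} and \eqref{eq:temporal deriv est}, then $M-N\geq M/2$ and $N/M\leq\sqrt{N/M}$. The only difference is that you pass through the intermediate term $p_M(y-x)$ instead of the paper's $p_{M-N}(y)$, which changes nothing; your caveat about avoiding the weighted forms is also unnecessary, since dividing by $s\vee|z|^2\geq s$ recovers the uniform bounds.
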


\begin{proof}
    Recall $p_s(z) = s^{-1}e^{-Q(z)/(2s)}$ from the proof of Lemma~\ref{lem:LCLT}.
    The triangle inequality yields
    \begin{align}
    \left|p_{ M-N}(y-x)-p_M(y)\right|
    &\leq
    \left|p_{ M-N}(y-x)-p_{ M-N}(y)\right|
    +
    \left|p_{ M-N}(y)-p_M(y)\right|.
    \label{eq:HK triangle}
    \end{align}
    By \eqref{eq:spatial deriv est} and the mean value theorem,
    \begin{align}
        \left|p_{ M-N}(y-x)-p_{ M-N}(y)\right|
        &\lesssim
        |x|(M-N)^{-3/2}
        \sup_{0\leq \theta\leq 1}
        \exp\!\left(-c \frac{|y-\theta x|^2}{ M-N}\right)\\
        &\leq     
        |x| (M-N)^{-3/2}.
    \end{align}
    Using $ M-N > M/2$ and $|x|\leq r_0\sqrt{N}$, we obtain
    \begin{equation}\label{eq: spacial diff}
        \left|p_{ M-N}(y-x)-p_{ M-N}(y)\right|
        \lesssim_{r_0}
        \frac{1}{M} \sqrt{\frac{N}{M}}.
    \end{equation}
    By \eqref{eq:temporal deriv est} and the mean value theorem, 
    \begin{equation}
        \left|p_{ M-N}(y)-p_M(y)\right|
        \lesssim
        N 
        \sup_{M-N \leq s \leq M}
        s^{-2}
        e^{-c' |y|^2/s}
        \leq    
        N (M-N)^{-2}.
    \end{equation}
    Using $M-N \geq M/2$ again, we obtain
    \begin{equation}\label{eq: time diff}
        \left|p_{M-N}(y)-p_M(y)\right|
        \lesssim
        N M^{-2}
        \leq
        \frac{1}{M} \sqrt{\frac{N}{M}},
    \end{equation}
    where the last inequality follows from $N/M\leq \sqrt{N/M}$.
    Combining \eqref{eq:HK triangle} with \eqref{eq: spacial diff} and \eqref{eq: time diff} completes the proof.
\end{proof}

\begin{lem} \label{lem:RW HK diff}
    For all $r_0>0$ and $N, M \in \NN$ with $M > 2N$,
    if $x \in \ZN^2_N$ with $|x|\leq r_0\sqrt N$ and $y \in \ZN^2_M$,
    then
    \begin{equation}
        \left| P(S_{M-N} = y - x) - P(S_M = y) \right|
        \lesssim_{r_0}
        M^{-1}
        \sqrt{\frac{N}{M}}
        + 
        \frac{\rho_N}{M}.
    \end{equation}
\end{lem}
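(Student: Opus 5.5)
Triangle inequality through the Gaussian kernel, with LCLT errors and Lemma~\ref{lem:HK diff} handling the pieces:
\begin{align}
    \left| P(S_{M-N} = y - x) - P(S_M = y) \right|
    &\leq
    \left| P(S_{M-N} = y-x) - \tfrac{c_*}{M-N} e^{-Q(y-x)/(2(M-N))} \right| \\
    &\quad+ c_*\left| p_{M-N}(y-x) - p_M(y) \right|
    + \left| \tfrac{c_*}{M} e^{-Q(y)/(2M)} - P(S_M = y) \right|,
\end{align}
with $p_s(z)=s^{-1}e^{-Q(z)/(2s)}$ as in the proof of Lemma~\ref{lem:LCLT}.

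First, check that the LCLT \eqref{eq:LCLT 1} applies to the first and third terms. For the third term, $y\in\ZN^2_M$ is given. For the first, we need $y-x\in\ZN^2_{M-N}$. Since $x\in\ZN^2_N$, $y\in\ZN^2_M$, and the classes $\ZN^2_i$ are cosets of the lattice $\ZN^2_0$ with $\ZN^2_i+\ZN^2_j=\ZN^2_{i+j}$, we get $y-x\in\ZN^2_{M-N}$. (If one is wary of this coset structure, the Markov property gives it: $P(S_M=y)>0$-type reachability shows that the class of $y-x$ is determined by $M-N \bmod d$.)

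Then the LCLT bounds the third term by $\rho_M/M\le\rho_N/M$, using that $(\rho_n)$ is non-increasing and $M>N$. It bounds the first term by $\rho_{M-N}/(M-N)$. Since $M-N>M/2$ and $M-N>N$, this is at most $2\rho_N/M$.

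The middle term is bounded directly by Lemma~\ref{lem:HK diff}, since $|x|\le r_0\sqrt N$ and $M>2N$; it contributes $\lesssim_{r_0}M^{-1}\sqrt{N/M}$.

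Summing the three bounds gives the claim. The only mildly delicate point is the lattice-class bookkeeping for $y-x$; everything else is immediate from earlier results.
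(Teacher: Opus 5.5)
Your proof is correct and follows the paper's argument: the same three-term triangle inequality through the Gaussian kernel, the LCLT for the two outer terms, Lemma~\ref{lem:HK diff} for the middle term, and then $M-N>M/2$ together with $\rho_{M-N}\le\rho_N$ and $\rho_M\le\rho_N$ to reach the stated bound. You also justify $y-x\in\ZN^2_{M-N}$ via the coset structure of the classes $\ZN^2_i$, which the paper only asserts.
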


\begin{proof}
    Since $x\in\ZN_N^2$ and $y\in\ZN_M^2$, 
    we have $y-x\in\ZN_{M-N}^2$.
    Thus, from the LCLT, Lemma~\ref{lem:HK diff}, and the triangle inequality,
    it follows that
    \begin{equation}
        \left| P(S_{M-N} = y - x) - P(S_M = y) \right|
        \lesssim_{r_0}
        \frac{1}{M}
        \sqrt{\frac{N}{M}}
        + 
         \frac{\rho_{M-N}}{M-N}
        +
        \frac{\rho_M}{M}.
    \end{equation}
    Using $M > 2N$ and $\rho_{M-N} \leq \rho_N$, we obtain the desired result.
\end{proof}

We are now ready to prove Proposition~\ref{prop:asymp indep}.

\begin{proof}[Proof of Proposition~\ref{prop:asymp indep}]
    Fix $r_0 \in (1, \infty)$, $M, N \in \NN$ with $M > 2N$.
    Let $\bm{x} = (x_1, x_2) \in \ZN^2_N \times \ZN^2_N$ be such that 
    $P(\bm{S}_N = \bm{x}) > 0$ and $|\bm{x}| \leq r_0 \sqrt{N}$.
    Set $A_M \coloneqq \left\{y\in\ZN_M^2:\, |(y,y)|\le r_0\sqrt M\right\}$.
    By the Markov property and independence of \(S^1\) and \(S^2\),
    \begin{align}
        &E\!\left[ f(\bm{S}_M)\mathbf 1_{\{|\bm{S}_M|\le r_0\sqrt M\}}\,\middle|\,\bm{S}_N=(x_1,x_2)\right] \\
        &=
        \sum_{y\in A_M}
        f(y,y)P(S_{M-N}=y-x_1)P(S_{M-N}=y-x_2),
    \end{align}
    whereas
    \begin{equation}
        E\!\left[ f(\bm{S}_M)\mathbf 1_{\{|\bm{S}_M|\le r_0\sqrt M\}} \right]
        =
        \sum_{y\in A_M}f(y,y)P(S_M=y)^2.
    \end{equation}
    Since $M > 2N$,
    the LCLT implies that
    \begin{equation}   
        \max_{n\geq M-N,\,x\in \ZN^2}P(S_{n}=x) \lesssim \frac{1}{M}.
    \end{equation}
    Hence, 
    using \(|ab-c^2|\le b|a-c|+c|b-c|\) for $a,b,c \geq 0$,
    we deduce that
    \begin{align}
        &\left|
            E\!\left[ f(\bm{S}_M)\mathbf 1_{\{|\bm{S}_M|\le r_0\sqrt M\}} \,\middle|\,\bm{S}_N = (x_1, x_2) \right]
            -
            E\!\left[ f(\bm{S}_M)\mathbf 1_{\{|\bm{S}_M|\le r_0\sqrt M\}} \right]
        \right|\\
         &= \left| \sum_{y\in A_M}f(y,y)\bigl( P(S_{M-N}=y-x_1)P(S_{M-N}=y-x_2)-P(S_M=y)^2 \bigr) \right|\\
        &\lesssim 
        \frac{\|f\|_\infty}{M}
        \sum_{i=1}^2
        \sum_{y\in A_M}
        \left|P(S_{M-N}=y-x_i)-P(S_M=y)\right|. 
        \label{eq:proof-main}
    \end{align}
    Lemma~\ref{lem:RW HK diff} then yields that, for each $i=1,2$,
    \begin{equation}
        \sum_{y\in A_M}
        \left|P(S_{M-N}=y-x_i)-P(S_M=y)\right|
        \lesssim_{r_0}
        M^{-1}\sqrt{\frac NM} \sum_{y \in A_M} 1
        +
        \frac{\rho_N}{M}\sum_{y \in A_M} 1.
    \end{equation}
    Since $\sum_{y \in A_M} 1 \lesssim_{r_0} M$,
    the desired result follows.
\end{proof}

\section{Scaling limit of the spatial distribution at collision times} \label{sec:sum integral}

In this section,
using the LCLT,
we identify the limiting spatial distribution of a walk conditioned on a collision.

\begin{prop} \label{prop:IP on col}
    Let $f$ be a bounded continuous function on $\RN^2$.
    Then 
    \begin{equation}
        E\bigl[ f\bigl( S^1_n/\sqrt{n} \bigr) \mid S^1_n = S^2_n \bigr]
        = 
        \int_{\RN^2} f(x)\, \nu_{\Gamma/2}(dx) +   o_f(1;n\to \infty),
    \end{equation}
    where we recall 
    that $\nu_{\Gamma/2}$ denotes the two-dimensional Gaussian distribution with mean $0$ and covariance matrix $\Gamma/2$. 
\end{prop}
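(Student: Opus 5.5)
We write the conditional expectation as a ratio and apply the LCLT to both the numerator and the denominator. By independence,
\begin{equation}
    E\bigl[ f(S^1_n/\sqrt{n})\,\mathbf 1_{\{S^1_n=S^2_n\}}\bigr]
    = \sum_{x\in\ZN^2_n} f(x/\sqrt n)\,P(S_n=x)^2 ,
\end{equation}
and the denominator $P(S^1_n=S^2_n)$ is the same sum with $f\equiv 1$. It therefore suffices to show that, for every bounded continuous $f$,
\begin{equation}
    n\sum_{x\in\ZN^2_n} f(x/\sqrt n)\,P(S_n=x)^2
    \longrightarrow c_*^2\cdot\frac{\pi\sqrt{\det\Gamma}}{d}\int_{\RN^2} f\,d\nu_{\Gamma/2} .
\end{equation}
The constant is positive, so taking $f\equiv1$ and dividing gives the claim.

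\textbf{Step 1: replace $P(S_n=x)$ by the Gaussian kernel.} By \eqref{eq:LCLT 1}, for $x\in\ZN^2_n$ we have $P(S_n=x)=c_*n^{-1}e^{-Q(x)/(2n)}+\varepsilon_n(x)$ with $|\varepsilon_n(x)|\le\rho_n/(n\vee|x|^2)$. Using $|a^2-b^2|\le|a-b|(|a|+|b|)$ together with \eqref{eq:LCLT 2}, the error in the sum is bounded by
\begin{equation}
    \|f\|_\infty\, n\sum_{x}\frac{\rho_n}{(n\vee|x|^2)^2}\lesssim\|f\|_\infty\,\rho_n .
\end{equation}
This bound holds because $\sum_x (n\vee|x|^2)^{-2}\lesssim n^{-1}$: the ball $|x|\le\sqrt n$ contributes $n\cdot n^{-2}$, and the tail contributes $\sum_{|x|>\sqrt n}|x|^{-4}\lesssim n^{-1}$. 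Hence the error tends to $0$.

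\textbf{Step 2: Riemann sum over the sublattice.} After Step 1 the main term is
\begin{equation}
    c_*^2\, n^{-1}\sum_{x\in\ZN^2_n} f(x/\sqrt n)\,e^{-Q(x)/n}.
\end{equation}
Now $\ZN^2_n$ is a translate $a_n+B\ZN^2$ of the lattice in Lemma~\ref{lem:lattice transform}. This holds because $\ZN^2_i$ is a coset of $\ZN^2_0$, which follows from the group structure and the index being $d$. Each point of the rescaled lattice $n^{-1/2}\ZN^2_n$ therefore carries cell volume $d/n$, so
\begin{equation}
    n^{-1}\sum_{x\in\ZN^2_n} g(x/\sqrt n)\to d^{-1}\int_{\RN^2} g(u)\,du ,
\end{equation}
where $g(u)=f(u)e^{-Q(u)}$. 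This convergence is the main technical point, since $f$ is only bounded and continuous, not uniformly continuous. I would handle it by truncation. On $\{|u|\le R\}$, $g$ is uniformly continuous and the Riemann sum converges, uniformly over the shift $a_n$ since $a_n/\sqrt n$ lies in a bounded cell of side $O(n^{-1/2})$. Outside this ball, both the sum and the integral are bounded by $\|f\|_\infty\int_{|u|>R-1}e^{-c|u|^2}du$, using \eqref{eq:Q asymp}, and this tends to $0$ as $R\to\infty$.

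\textbf{Step 3: identify the Gaussian.} The weight $e^{-Q(u)}=e^{-\langle u,(\Gamma/2)^{-1}u\rangle/2}$ is, up to normalization, the density of $\nu_{\Gamma/2}$. Since $\int e^{-Q(u)}du=2\pi\sqrt{\det(\Gamma/2)}=\pi\sqrt{\det\Gamma}$, we get
\begin{equation}
    d^{-1}\int f(u)e^{-Q(u)}\,du=\frac{\pi\sqrt{\det\Gamma}}{d}\int f\,d\nu_{\Gamma/2},
\end{equation}
which is the limit claimed above. Dividing by the $f\equiv1$ case yields the proposition.
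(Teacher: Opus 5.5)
Your proof is correct, but it takes a different route from the paper's. The paper also writes the conditional expectation as a ratio, but it handles the denominator separately, by applying the LCLT to the difference walk $\tilde S=S^1-S^2$ (covariance $2\Gamma$) to get $P(S^1_n=S^2_n)=\frac{c_*}{2n}+o(1/n)$. In the numerator it replaces only \emph{one} factor $P(S_n=x)$ by $c_*n^{-1}e^{-Q(x)/(2n)}$. The sum then becomes $2E[f(S_n/\sqrt n)e^{-Q(S_n)/(2n)}]+o_f(1)$, and the ordinary CLT for $S_n/\sqrt n$, applied to the bounded continuous function $u\mapsto f(u)e^{-Q(u)/2}$, finishes the argument.

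The advantage of the paper's route is that the probability weights $P(S_n=x)$ do the Riemann-sum work for free. It needs no coset description of $\ZN^2_n$, no cell-volume computation, and no truncation argument for non-uniformly continuous $f$. Its cost is that the denominator needs an LCLT for $\tilde S$. That walk is confined to the sublattice $\ZN^2_0$, so the constant $c_*/2$ implicitly rests on the reduction via $B$ from Lemma~\ref{lem:lattice transform}.

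Your approach uses only the LCLT for $S$ itself and obtains the denominator as the case $f\equiv1$. Since $c_*^2\pi\sqrt{\det\Gamma}/d=c_*/2$, you in fact recover \eqref{eq:IP on col1} as a by-product, which is what Corollary~\ref{cor:sum to int} later needs. The price is the lattice bookkeeping in Step 2 and the truncation. The bookkeeping consists of showing that $\ZN^2_n$ is a coset of $B\ZN^2$, hence has cell area $d$; this does follow from irreducibility and the definition of the period, as you indicate. You handle both adequately.
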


\begin{proof}
    By the independence of $S^1$ and $S^2$,
    we have 
    \begin{equation}
        E\bigl[ f\bigl( S^1_n/\sqrt{n} \bigr) \mid S^1_n = S^2_n \bigr]
        =
        P(S^1_n = S^2_n)^{-1}
        \sum_{x \in \ZN^2_n}
        f(x/\sqrt{n}) P(S_n = x)^2.
    \end{equation}
    Since the covariance matrix of the jump distribution of  $\tilde{S} \coloneqq S^1 - S^2$ is $2 \Gamma$,
    applying the LCLT to $\tilde{S}$ gives
    \begin{equation} \label{eq:IP on col1}
        P(S^1_n = S^2_n) 
        = 
        P(\tilde{S}_n = (0,0))
        =
        \frac{c_*}{2n}
        +
        \frac{o(1; n \to \infty)}{n}.
    \end{equation}
    Combining this with the LCLT for $S$,
    we deduce that 
    \begin{align}
        E\bigl[ f\bigl( S^1_n/\sqrt{n} \bigr) \mid S^1_n = S^2_n \bigr]
        &=
        2 \sum_{x \in \ZN^2_n} f(x/\sqrt{n}) e^{-Q(x)/(2n)} P(S_n = x)
        + (\|f\|_\infty+1) o(1;n \to \infty)\\
        &=
        2 E\bigl[ f\bigl( S_n/\sqrt{n} \bigr) e^{-Q(S_n)/(2n)} \bigr]
        + o_f(1;n\to\infty).
    \end{align}
    By the central limit theorem for $S_n/\sqrt{n}$,
    we obtain 
    \begin{equation}
        \lim_{n \to \infty}
        E\bigl[ f\bigl( S^1_n/\sqrt{n} \bigr) \mid S^1_n = S^2_n \bigr]
        =
        \frac{1}{\pi\sqrt{\det \Gamma}}
        \int_{\RN^2} f(x) e^{-Q(x)}\, dx.
    \end{equation}
    Finally, the definition of $\nu_{\Gamma/2}$ implies that 
    the right-hand side coincides with $\int_{\RN^2} f(x)\, \nu_{\Gamma/2}(dx)$,
    which completes the proof.
\end{proof}

For later use,
we record a consequence of Proposition~\ref{prop:IP on col}.

\begin{cor} \label{cor:sum to int}
    Let $\{f_\alpha\}_{\alpha \in \mathcal{A}}$ be a family of functions from $\RN^2$ to $\RN$.
    Assume that $\{f_\alpha\}_{\alpha \in \mathcal{A}}$ is uniformly bounded and equicontinuous.
    Then 
    \begin{equation}
        \sup_{\alpha \in \mathcal{A}}
        \left|
            E\!\left[ f_\alpha( S^1_n/\sqrt{n}) \mathbf{1}_{\{S^1_n = S^2_n\}} \right] 
            -
            \frac{c_*}{2n} 
             \int_{\RN^2} f_\alpha(x)\, \nu_{\Gamma/2}(dx)
        \right|
        =
        \frac{o_{\{f_\alpha\}_{\alpha \in \mathcal{A}}}(1; n \to \infty)}{n}.
    \end{equation}
\end{cor}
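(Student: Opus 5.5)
The plan is to go through the same identity as in the proof of Proposition~\ref{prop:IP on col} and to keep every error bound uniform in $\alpha$. By independence,
\begin{equation}
    E\!\left[ f_\alpha( S^1_n/\sqrt{n}) \mathbf{1}_{\{S^1_n = S^2_n\}} \right]
    =
    \sum_{x \in \ZN^2_n} f_\alpha(x/\sqrt{n})\, P(S_n = x)^2 .
\end{equation}
Let $K\coloneqq\sup_\alpha\|f_\alpha\|_\infty$. The first step replaces one factor $P(S_n=x)$ by $\frac{c_*}{n}e^{-Q(x)/(2n)}$ using \eqref{eq:LCLT 1}. The error is at most
\begin{equation}
    K\sum_x \frac{\rho_n}{n\vee|x|^2}\,P(S_n=x)\le K\rho_n/n,
\end{equation}
which is $o(1)/n$ uniformly in $\alpha$. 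This leaves
\begin{equation}
    \frac{c_*}{n}\,E\bigl[f_\alpha(S_n/\sqrt n)\,e^{-Q(S_n)/(2n)}\bigr].
\end{equation}
Writing $g_\alpha(y)\coloneqq f_\alpha(y)e^{-Q(y)/2}$, the family $\{g_\alpha\}$ is again uniformly bounded and equicontinuous, since $e^{-Q/2}$ is bounded and uniformly continuous. By the computation at the end of the proof of Proposition~\ref{prop:IP on col}, if $Z$ denotes the Gaussian limit of $S_n/\sqrt n$ (covariance $\Gamma$), then $E[g_\alpha(Z)]=\frac12\int f_\alpha\,d\nu_{\Gamma/2}$. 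It therefore suffices to prove
\begin{equation}
    \sup_\alpha\bigl|E[g_\alpha(S_n/\sqrt n)]-E[g_\alpha(Z)]\bigr|\to0 .
\end{equation}

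The main obstacle is this uniformity: the central limit theorem applied to one test function gives only pointwise convergence. I would get uniform convergence over a uniformly bounded, equicontinuous class from the standard fact that weak convergence is uniform over such classes. Concretely, fix $\varepsilon>0$ and choose $R$ with $e^{-Q(y)/2}\le\varepsilon$ for $|y|\ge R$, so that each $g_\alpha$ is within $K\varepsilon$ of its restriction to the ball $\{|y|\le R\}$ (the Gaussian factor gives the tail control directly, so tightness is not even needed). On the ball, Arzel\`a--Ascoli makes $\{g_\alpha\}$ totally bounded in sup norm, so finitely many $g_{\alpha_1},\dots,g_{\alpha_k}$ form an $\varepsilon$-net there. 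The hard cutoff at $|y|=R$ is awkward, so I would instead multiply by a continuous cutoff $\chi_R$ equal to $1$ on $\{|y|\le R\}$ and vanishing outside $\{|y|\le R+1\}$, and work with $g_\alpha\chi_R$. These functions form a compactly supported equicontinuous family, so a finite $\varepsilon$-net in sup norm exists. The central limit theorem applied to the finitely many net elements, together with the triangle inequality, then gives
\begin{equation}
    \limsup_n\sup_\alpha\bigl|E[g_\alpha(S_n/\sqrt n)]-E[g_\alpha(Z)]\bigr|\lesssim(K+1)\varepsilon .
\end{equation}
Letting $\varepsilon\to0$ finishes this step.

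Finally, combine the pieces: the main term equals
\begin{equation}
    \frac{c_*}{n}\cdot\frac12\int f_\alpha\,d\nu_{\Gamma/2}+\frac{o(1)}{n},
\end{equation}
and the LCLT error from the first step is $K\rho_n/n$. Both errors are uniform in $\alpha$ and depend only on the family $\{f_\alpha\}$ through $K$, $\varepsilon$, and the chosen net. This gives the claim.
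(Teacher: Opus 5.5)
Your proof is correct and follows essentially the paper's route. The paper combines Proposition~\ref{prop:IP on col} (LCLT plus CLT) with Ranga Rao's theorem that weak convergence is uniform over uniformly bounded equicontinuous classes, and then multiplies by $P(S^1_n=S^2_n)=\frac{c_*}{2n}+\frac{o(1)}{n}$; you instead work directly with the unconditional expectation and prove the needed uniformity by hand (continuous cutoff, Arzel\`a--Ascoli finite net, with the weight $e^{-Q/2}$ supplying the tail control), which makes the argument self-contained but otherwise equivalent.
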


\begin{proof}
    By Proposition~\ref{prop:IP on col} and~\cite[Theorem~3.1]{RangaRao_62_Relations},
    we have 
    \begin{equation}
        \sup_{\alpha \in \mathcal{A}}
        \left|
            E\bigl[ f_\alpha\bigl( S^1_n/\sqrt{n} \bigr) \mid S^1_n = S^2_n \bigr]
            -
             \int_{\RN^2} f_\alpha(x)\, \nu_{\Gamma/2}(dx)
        \right|
        = o_{\{f_\alpha\}_{\alpha \in \mathcal{A}}}(1; n \to \infty).
    \end{equation}
    Combining this with \eqref{eq:IP on col1} and the uniform boundedness of $\{f_\alpha\}_{\alpha \in \mathcal{A}}$,
    we obtain the desired result.
\end{proof}

\section{Scaling limit of the number of collisions}
\label{sec:limit num col}

In this section, we show that the number of collisions converges
under logarithmic scaling to an exponential random variable;
see Theorem~\ref{thm:col num conv}.
The number of collisions is the local time at the origin
of the difference walk $\tilde{S}\coloneqq S^1-S^2$.
Such scaling limits go back to the Erd{\H{o}}s--Taylor theorem
\cite[Theorem~1]{Erdos_Taylor_60_Some};
see also \cite{Bohun_Marynych_21_Local,Lygkonis_Zygouras_23_Moments}
for generalizations.
Since $\tilde{S}$ may be confined to a proper sublattice of $\ZN^2$,
applying the existing results directly requires some care.
We therefore include a short proof adapted to our setting. 
Recall the constant $\tilde{C}$ from \eqref{eq:Charact}.

\begin{thm} \label{thm:col num conv}
    For each $\lambda \geq 0$,
    \begin{equation}
        \lim_{T \to \infty}
        E\!\left[
            \exp\!\left( -\frac{\lambda}{\log T} \sum_{n=1}^{T} \mathbf{1}\{S^1_n = S^2_n\} \right)
        \right]
        = 
        \frac{\Charact}{\lambda + \Charact}.
    \end{equation}
    Moreover, this convergence is uniform in $\lambda$ on every compact subset of $[0,\infty)$.
\end{thm}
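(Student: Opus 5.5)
We prove the Laplace-transform limit by the method of moments applied to the local time $L_T \coloneqq \sum_{n=1}^T \mathbf{1}_{\{S^1_n=S^2_n\}}$. The target limit $\Charact/(\lambda+\Charact)$ is the Laplace transform of an exponential variable with mean $1/\Charact$, whose $m$-th moment is $m!\,\Charact^{-m}$. So it suffices to show
\begin{equation}
    \lim_{T\to\infty} \frac{E[L_T^m]}{(\log T)^m} = \frac{m!}{\Charact^{m}}, \qquad \forall m\geq 1,
\end{equation}
together with a uniform exponential-moment bound. The exponential law is determined by its moments, and the moments grow only like $m!$. Hence convergence of moments gives convergence in distribution of $L_T/\log T$, and therefore pointwise convergence of the Laplace transforms for $\lambda \geq 0$, since $x\mapsto e^{-\lambda x}$ is bounded and continuous on $[0,\infty)$. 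Uniformity on compacts then follows at once: the functions $\lambda\mapsto E[e^{-\lambda L_T/\log T}]$ are monotone in $\lambda$ and the limit is continuous, so a Dini/P\'olya-type argument applies. Alternatively, one can use equicontinuity from $|\partial_\lambda E[e^{-\lambda L_T/\log T}]| \le E[L_T]/\log T \lesssim 1$.

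For the moments, write
\begin{equation}
    E[L_T^m] = \sum_{k=1}^m \binom{m}{k}_{\!\text{surj}} \sum_{0<t_1<\cdots<t_k\le T} \prod_{i=1}^k p(t_i-t_{i-1}),
\end{equation}
where $p(n)\coloneqq P(S^1_n=S^2_n)$ and the coefficient counts surjections from an $m$-set onto a $k$-set. The factorisation uses the Markov property and translation invariance of the difference walk $\tilde S$: after a collision, the pair restarts from a point of $D$. By Lemma~\ref{lem:col est}, the terms with $k<m$ are $O((\log T)^{k})$ and therefore negligible. The main term $k=m$ carries coefficient $m!$. By \eqref{eq:IP on col1}, $p(n)=\frac{c_*}{2n}(1+o(1))$ when $n \equiv 0 \pmod d$ and $0$ otherwise. (Here one should check that $\tilde S$ returns only at multiples of $d$; this holds because the period of $\tilde S$ equals $d$.) Summing, $\sum_{n\le T} p(n) = \frac{c_*}{2d}\log T\,(1+o(1)) = \Charact^{-1}\log T\,(1+o(1))$, using $\frac{c_*}{2d} = \frac{1}{4\pi\sqrt{\det\Gamma}} = \frac{1}{\Charact}$.

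The main obstacle is the $m$-fold ordered convolution. We need
\begin{equation}
    \sum_{0<t_1<\cdots<t_m\le T}\prod_{i=1}^m p(t_i-t_{i-1}) \sim \bigl(\Charact^{-1}\log T\bigr)^m.
\end{equation}
This is not immediate, because the constraint $t_m\le T$ couples the increments. For the upper bound, drop the ordering constraint in favour of all increments $\le T$; this gives at most $(\sum_{n\le T}p(n))^m$. For the lower bound, restrict to increments $u_i\le T/m$ in each coordinate. The total is then $\le T$ automatically, which yields $(\sum_{n\le T/m}p(n))^m = (\Charact^{-1}\log T)^m(1+o(1))$, since $\log(T/m)\sim\log T$ for fixed $m$. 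The two bounds match.

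Finally, Lemma~\ref{lem:col est} gives $E[L_T^m]\le m!\,(c_2\log T)^m$ for $m \le T$. This is exactly the Carleman-type control needed for moment convergence to imply distributional convergence; it also bounds the exponential moment for small arguments, which makes the moment method legitimate.
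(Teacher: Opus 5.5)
Your moment-method route differs from the paper's and can be made to work, but the step computing $\sum_{n\le T}p(n)$ is wrong as written. The claim that $\tilde S=S^1-S^2$ has period $d$ and returns to the origin only at multiples of $d$ is false. In fact $p(n)=\sum_{x}P(S_n=x)^2>0$ for every $n\geq1$, so $\tilde S$ is aperiodic on $\ZN^2_0$; the paper notes this in the proof of Lemma~\ref{lem:col time tail}. For example, two simple random walks have $d=2$, yet they collide at time $1$ with probability $1/4$.

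The correct input is \eqref{eq:IP on col1} for all $n$, namely $p(n)=\frac{c_*}{2n}(1+o(1))$. Since $2/c_*=\Charact$, this gives $\sum_{n\le T}p(n)\sim\frac{c_*}{2}\log T=\Charact^{-1}\log T$. Your identity $\frac{c_*}{2d}=\Charact^{-1}$ is also false for $d>1$; in fact $\frac{c_*}{2d}=\frac{1}{d\Charact}$. The two errors cancel, which is the only reason your constant comes out right. Fixing just one of them would leave you off by a factor $d$.

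Once this is repaired, the rest is sound:
\begin{itemize}
\item the surjection expansion and the Markov factorisation are exact;
\item the $k<m$ terms are negligible by Lemma~\ref{lem:col est};
\item your upper and lower bounds on the ordered convolution match;
\item the exponential law is moment-determinate;
\item P\'olya's theorem, or equicontinuity, gives uniformity on compacts.
\end{itemize}
A minor point: your closing bound $E[L_T^m]\le m!\,(c_2\log T)^m$ does not follow literally, because the surjection numbers sum to the Fubini number rather than $m!$. You get $m!\,(C\log T)^m$ with a larger $C$, and the bound is not needed anyway, since convergence of all moments to those of a moment-determinate limit suffices.

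The paper instead uses the renewal structure. The inter-collision gaps are i.i.d.\ with $P(\xi>x)\sim\Charact/\log x$ (Lemma~\ref{lem:col time tail}, obtained from Lawler--Limic after the lattice change $B^{-1}$). A renewal limit theorem for slowly varying tails (Kabluchko--Marynych) then gives $\mathcal{N}(T)/(\Charact^{-1}\log T)\to\mathrm{Exp}(1)$ directly, and uniformity is quoted from Kallenberg.

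Your argument needs only the Green's-function asymptotics $p(n)\sim c_*/(2n)$ and the moment bound of Lemma~\ref{lem:col est}. It therefore avoids both the first-return tail asymptotic, which is a Tauberian-type input, and the external renewal theorem, at the price of the combinatorial moment computation. It is in the spirit of classical moment proofs of the Erd\H{o}s--Taylor theorem. The paper's route is shorter given its citations, and it makes explicit the renewal picture behind the interpretation of $\Pi_*$.
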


For later use, we record the following version of the above theorem.

\begin{cor} \label{cor:col num conv}
    The following convergence is uniform for $(\lambda,t)$ 
    in every compact subset of $[0,\infty)\times(0,\infty)$:
    \begin{equation}
        \lim_{N \to \infty}
        E\!\left[
            \exp\!\left( -\frac{\lambda}{\log N} \sum_{n=1}^{\lfloor N^t \rfloor} \mathbf{1}_{\{S^1_n=S^2_n\}} \right)
        \right]
        =
        \frac{\Charact}{\lambda\, t + \Charact}.
    \end{equation}
\end{cor}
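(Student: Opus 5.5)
We deduce Corollary~\ref{cor:col num conv} from Theorem~\ref{thm:col num conv} by a change of variables, using that a shift of $T$ by a bounded factor in the logarithm costs nothing. Fix a compact set $K\subset[0,\infty)\times(0,\infty)$, so that $t\in[t_-,t_+]$ with $0<t_-\le t_+<\infty$ and $\lambda\le\lambda_+$ on $K$. For $(\lambda,t)\in K$ set $T\coloneqq\lfloor N^t\rfloor$ and $L_T\coloneqq\sum_{n=1}^T\mathbf 1_{\{S^1_n=S^2_n\}}$. Then
\begin{equation}
    \frac{\lambda}{\log N}L_T=\frac{\lambda_N}{\log T}L_T,
    \qquad
    \lambda_N\coloneqq\lambda\,\frac{\log T}{\log N}.
\end{equation}
Since $\log\lfloor N^t\rfloor=t\log N+O(N^{-t})$, we have $\lambda_N=\lambda t+o(1)$ uniformly on $K$. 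Moreover, $T\ge\lfloor N^{t_-}\rfloor\to\infty$ uniformly.

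We then split the error into two pieces:
\begin{multline}
    \left|E\!\left[e^{-\lambda_N L_T/\log T}\right]-\frac{\Charact}{\lambda t+\Charact}\right|
    \le
    \left|E\!\left[e^{-\lambda_N L_T/\log T}\right]-\frac{\Charact}{\lambda_N+\Charact}\right|\\
    +\left|\frac{\Charact}{\lambda_N+\Charact}-\frac{\Charact}{\lambda t+\Charact}\right|.
\end{multline}

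For the first term, note that $\lambda_N\le 2\lambda_+t_+$ for large $N$. The uniform-in-$\lambda$ convergence in Theorem~\ref{thm:col num conv} on the compact set $[0,2\lambda_+t_+]$ therefore shows that
\begin{equation}
    \sup_{\mu\le 2\lambda_+t_+}\left|E\!\left[e^{-\mu L_T/\log T}\right]-\frac{\Charact}{\mu+\Charact}\right|\to0
    \qquad\text{as } T\to\infty.
\end{equation}
Since $T\ge\lfloor N^{t_-}\rfloor$, this supremum tends to $0$ uniformly in $(\lambda,t)\in K$.

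For the second term, we use that $\mu\mapsto\Charact/(\mu+\Charact)$ is $1/\Charact$-Lipschitz on $[0,\infty)$. Together with $\lambda_N-\lambda t\to0$ uniformly on $K$, this makes the second term vanish uniformly as well.

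The only point needing care is that Theorem~\ref{thm:col num conv} is stated for integer $T\to\infty$ along all integers, with uniformity in $\lambda$. We use exactly that uniformity, with $T=T(N,t)$ ranging over integers that are at least $\lfloor N^{t_-}\rfloor$. There is no genuine obstacle: the argument is purely a reparametrization combined with equicontinuity of the limit.
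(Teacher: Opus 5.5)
Your proof is correct and matches the paper's, which applies Theorem~\ref{thm:col num conv} with $T=\lfloor N^t\rfloor$ and uses $\log\lfloor N^t\rfloor/\log N\to t$. You have written out the uniformity details the paper leaves to the reader: reparametrizing $\lambda$ as $\lambda_N$, using the uniform-in-$\lambda$ convergence of the theorem with $T\ge\lfloor N^{t_-}\rfloor$, and using Lipschitz continuity of the limit.
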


\begin{proof}
    Applying Theorem~\ref{thm:col num conv} with $\lfloor N^t\rfloor$ in place of $T$ 
    and using $\log\lfloor N^t\rfloor/\log N\to t$,
    one can readily verify the result. 
\end{proof}

To establish Theorem~\ref{thm:col num conv},
we first identify the tail asymptotic of the first collision time $\xi$,
defined by
\begin{equation}
    \xi \coloneqq \inf\{n \geq 1 \mid S^1_n = S^2_n\}.
\end{equation}
Below, given functions $f$ and $g$,
we write $f(x) \sim g(x)$ as $x \to \infty$
if $f(x)/g(x) \to 1$ as $x \to \infty$.

\begin{lem} \label{lem:col time tail}
    As $x\to\infty$,
    \begin{equation}
        P(\xi > x) \sim \frac{\Charact}{\log x}.
    \end{equation}
\end{lem}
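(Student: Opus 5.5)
The plan is to use the classical last-exit decomposition for the difference walk $\tilde S \coloneqq S^1-S^2$, combined with a Tauberian argument for slowly varying functions. Write $u_n \coloneqq P(\tilde S_n=0)=P(S^1_n=S^2_n)$ for $n\ge1$ and $u_0\coloneqq1$. By \eqref{eq:IP on col1}, $u_n=\frac{c_*}{2n}(1+o(1))$ as $n\to\infty$, while $u_n=0$ unless $d\mid n$ up to the lattice structure. Only the averaged behaviour of $u_n$ will matter, so periodicity causes no trouble.

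Set $U(T)\coloneqq\sum_{n=0}^T u_n$. Summing $\frac{c_*}{2n}$ gives
\begin{equation}
U(T)\sim \frac{c_*}{2}\log T = \frac{d}{4\pi\sqrt{\det\Gamma}}\log T=\frac{\log T}{\Charact}.
\end{equation}

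The first step is the last-exit identity. For each $T\ge 0$, split according to the last time $k\le T$ at which $\tilde S_k=0$. By the Markov property,
\begin{equation}
1=\sum_{k=0}^{T} u_k\, P(\xi>T-k).
\end{equation}
Since $m\mapsto P(\xi>m)$ is non-increasing, this gives two bounds. First, $1\ge U(T)P(\xi>T)$, hence $P(\xi>T)\le 1/U(T)\sim \Charact/\log T$.

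For the matching lower bound, fix $\varepsilon\in(0,1)$ and split the sum at $k=T-T^{1-\varepsilon}$. Monotonicity yields
\begin{equation}
1\le U(T)\,P(\xi>T^{1-\varepsilon})+\bigl(U(T)-U(T-T^{1-\varepsilon})\bigr).
\end{equation}
The second term is at most $\sum_{T-T^{1-\varepsilon}<n\le T} \frac{C}{n}\lesssim T^{-\varepsilon}\to0$. Hence
\begin{equation}
P(\xi>T^{1-\varepsilon})\ge \frac{1-o(1)}{U(T)}.
\end{equation}
Now substitute $x=T^{1-\varepsilon}$. Since $\log T=\log x/(1-\varepsilon)$, this gives $\liminf_{x\to\infty} \log x\,P(\xi>x)\ge (1-\varepsilon)\Charact$. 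Letting $\varepsilon\downarrow0$ and combining with the upper bound proves $P(\xi>x)\sim\Charact/\log x$. For non-integer $x$, use $\lfloor x\rfloor$; $\log\lfloor x\rfloor\sim\log x$.

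The only genuinely delicate input is the precise asymptotic $u_n\sim c_*/(2n)$ on the relevant residue class. This requires care because $\tilde S$ lives on a possibly proper sublattice and its periodicity may differ from that of $S$. I would rely directly on \eqref{eq:IP on col1} as stated in the paper, so it is used as given. If needed, I would double-check it by writing $u_n=\sum_x P(S_n=x)^2$ and applying Lemma~\ref{lem:LCLT} directly: the main term is $\sum_{x\in\ZN^2_n}\frac{c_*^2}{n^2}e^{-Q(x)/n}$. A Riemann sum over the lattice $\ZN^2_n$, which has covolume $d$ by Lemma~\ref{lem:lattice transform}, gives $\frac{c_*^2}{n^2}\cdot\frac{\pi n\sqrt{\det\Gamma}}{d}=\frac{c_*}{2n}$. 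The error term $\sum_x \frac{\rho_n}{n\vee|x|^2}\cdot\frac{1}{n\vee|x|^2}=O(\rho_n/n)$. This confirms the constant, after which the argument above is routine.
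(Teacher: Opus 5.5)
Your proof is correct, but it takes a genuinely different route from the paper. The paper passes to $S'=B^{-1}\tilde S$, an irreducible aperiodic walk on $\ZN^2$, and computes $\det\Gamma'=4d^{-2}\det\Gamma$. It then quotes the first-return tail asymptotic \cite[Proposition~4.2.4]{Lawler_Limic_10_Random} as a black box.

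You instead prove that asymptotic directly:
\begin{itemize}
\item The last-exit identity $1=\sum_{k=0}^T u_kP(\xi>T-k)$, together with the monotonicity of $m\mapsto P(\xi>m)$, gives $P(\xi>T)\le 1/U(T)$.
\item The same identity gives $P(\xi>T^{1-\varepsilon})\ge(1-O(T^{-\varepsilon}))/U(T)$.
\item Since $\log(T^{1-\varepsilon})=(1-\varepsilon)\log T$, the loss disappears as $\varepsilon\downarrow0$.
\end{itemize}

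Your route is self-contained: its only input is the on-diagonal asymptotic $u_n\sim c_*/(2n)$. Your check of that asymptotic writes $u_n=\sum_x P(S_n=x)^2$ and applies Lemma~\ref{lem:LCLT} to $S$ itself, as a Riemann sum over the coset $\ZN^2_n$ of covolume $d$. This avoids needing an LCLT for $\tilde S$ on a sublattice, which the paper's derivation of \eqref{eq:IP on col1} tacitly uses. It also shows directly why $\Charact=2/c_*$, the identity used at the end of the proof of Theorem~\ref{thm:lap conv}. The paper's route is shorter, but it relies on an external theorem plus the determinant bookkeeping.

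One correction: delete the side remark that $u_n=0$ unless $d\mid n$. It is false, since $u_n=\sum_x P(S_n=x)^2>0$ for every $n\ge1$. Hence $\tilde S$ is aperiodic, as the paper notes, and \eqref{eq:IP on col1} holds for all $n$. If $u_n$ really vanished off multiples of $d$, your summation would give $U(T)\sim\frac{c_*}{2d}\log T$, and the constant would be off by a factor $d$. Your computation in fact uses $u_n\sim c_*/(2n)$ for every $n$, so the argument itself is unaffected.

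A trivial detail remains. To pass from $x=T^{1-\varepsilon}$ with $T\in\NN$ to all real $x$, take $T=\lceil x^{1/(1-\varepsilon)}\rceil$ and use monotonicity.
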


\begin{proof}
Recall the notation $\ZN_i$ from \eqref{def: ZNi}.     Define $\tilde{S} = (\tilde{S}_n)_{n \geq 0}$
    by $\tilde{S}_n \coloneqq S^1_n - S^2_n$.
    We then have $\xi = \inf\{n \geq 1 \mid \tilde{S}_n = (0,0) \}$.
    The subgroup generated by the support of $\tilde{S}_1$ is $\ZN_0^2$.
    Moreover, $\tilde{S}$ is irreducible and aperiodic on $\ZN_0^2$.
    Let $B$ be the matrix in Lemma~\ref{lem:lattice transform}.
    Then $S'\coloneqq B^{-1}\tilde{S}$ is an irreducible and
    aperiodic random walk on $\ZN^2$.
    Let $\tilde{\Gamma}$ and $\Gamma'$ denote the covariance matrices of $\tilde{S}$ and $S'$,
    respectively.
    The independence of $S^1$ and $S^2$ implies $\tilde{\Gamma} = 2\Gamma$.
    Moreover, since $|\det B| = d$,
    it follows that $\det \Gamma' = d^{-2} \det \tilde{\Gamma}$.
    Thus, $\det \Gamma' = 4d^{-2} \det \Gamma$.
    Applying \cite[Proposition~4.2.4]{Lawler_Limic_10_Random} to $S'$,
    we obtain 
    \begin{equation}
        P(\xi > n)
        \sim
        \frac{2\pi\sqrt{\det\Gamma'}}{\log n}
        =
        \frac{4\pi\sqrt{\det\Gamma}}{d\log n}
        =
        \frac{\Charact}{\log n}.
    \end{equation}
    Finally, since $P(\xi>x)=P(\xi>\lfloor x\rfloor)$, the same
    asymptotic holds for real $x$.
\end{proof}

\begin{proof} [Proof of Theorem~\ref{thm:col num conv}]
    Put $T_0\coloneqq0$ and
    $T_{n+1}\coloneqq\inf\{m>T_n \mid S^1_m=S^2_m\}$ for $n\geq0$, 
    and set $\xi_{n+1}\coloneqq T_{n+1}-T_n$.  
    Then $\xi_1=\xi$ and $(\xi_n)_{n\geq1}$ is an i.i.d.\ sequence.
    Define the corresponding first-passage process by
    $\mathcal{N}(t) \coloneqq \inf\{n\geq1 \mid T_n>t\}$ for $t \geq 0$.
    By definition,
    $\sum_{n=1}^T \mathbf{1}_{\{S^1_n=S^2_n\}} = \mathcal{N}(T)-1$.
    Set $L(x)\coloneqq\Charact^{-1}\log x$ for $x \geq 1$.
    By Lemma~\ref{lem:col time tail},
    $P(\xi > x) \sim L(x)^{-1}$ as $x \to \infty$.
    Hence, by \cite[Corollary~2.3]{Kabluchko_Marynych_16_Renewal}, as $T\to \infty$, 
    $\mathcal{N}(T)/L(T)$ converges in distribution to the standard exponential distribution.
    Recalling that $L(T) = \Charact^{-1} \log T$,
    we obtain the convergence of the Laplace transforms.
    It is well known that 
    the convergence is uniform on every compact subset 
    (cf.\ \cite[Theorem~6.3(ii)]{Kallenberg_21_Foundations}).
    This completes the proof.
\end{proof}

\section{Convergence of Laplace transforms} \label{sec:conv of Lap}

Distributional convergence of random measures can be verified by convergence of relevant Laplace transforms~\cite{Kallenberg_17_Random}.
The aim of this section is to prove the convergence of Laplace transforms for the scaled collision measures $\Pi_N$;
see Theorem~\ref{thm:lap conv} below.

We impose the following conditions on the test functions.

\begin{assum}[The class of test functions] \label{assum:test}
    The test function $f\colon \RN^2 \times [0,\infty) \to [0,\infty)$ 
    is uniformly continuous
    and satisfies the following condition:
    there exists $\kappa = \kappa(f)\in (1,\infty)$ 
    such that $\supp(f) \subseteq \RN^2 \times [0,\kappa-1]$ and
    \begin{equation} \label{eq:norm of f}
        c_2 \kappa \|f\|_\infty < 1,
    \end{equation}
    where $\supp(f)$ denotes the support of $f$ and 
    the constant $c_2$ is recalled from Lemma~\ref{lem:col est}.
\end{assum}

Throughout this section,
we fix a test function $f$ satisfying Assumption~\ref{assum:test}.
The following is the main theorem of this section.  

\begin{thm}\label{thm:lap conv}
    We have
    \begin{align}
        \lim_{N \to \infty}
        E\!\left[e^{-\Pi_N(f)}\right]
        = 
        \exp\!\left(
            -
            \int_0^\infty \int_{\RN^2} 
            \frac{f(x, t)}{f(x, t)\,t + \Charact}\, \nu_{\Gamma/2}(dx)\,dt
        \right).
        \label{eq:conv of lap}
    \end{align}
\end{thm}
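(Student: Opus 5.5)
We expand $e^{-\Pi_N(f)}$ and organise the collisions into clusters. Write $a_n \coloneqq \frac{1}{\log N} f(\phi_N(S^1_n,n))\mathbf 1_{\{S^1_n=S^2_n\}}$, so that $\Pi_N(f)=\sum_n a_n$, where only $n\le N^{\kappa-1}$ contributes. Writing $e^{-a_n}=1+(e^{-a_n}-1)$ and expanding the product gives
\[
E[e^{-\Pi_N(f)}]=\sum_{m\ge0}\sum_{0<t_1<\cdots<t_m\le N^{\kappa-1}}E\Bigl[\prod_{i=1}^m(e^{-a_{t_i}}-1)\Bigr].
\]
Since $|e^{-a}-1|\le a$, Lemma~\ref{lem:col est} (with $N^{\kappa}$ as the time horizon) bounds the $m$-th term by $(c_2\kappa\|f\|_\infty)^m$. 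By \eqref{eq:norm of f} the series therefore converges absolutely, uniformly in $N$, and we may truncate at $m\le m_0$ and pass to the limit termwise.

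Inside each term we group the times $t_1<\dots<t_m$ into clusters on the logarithmic scale. Fix a large $K$. Two consecutive times $t_i<t_{i+1}$ are placed in the same cluster if $t_{i+1}\le K t_i$, and in different clusters if $t_{i+1}>Kt_i$. Two approximations are then made.

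\textbf{Within a cluster.} The first time $s$ of the cluster has $\phi_N$-image $(S^1_s/\sqrt s,\log s/\log N)$. Every later time $t$ in the cluster satisfies $\log t/\log N=\log s/\log N+O(\log K\cdot\#/\log N)$. For the spatial coordinate we use that the relevant collisions are typically at times $t\le s\,(\log N)^{o(1)}$. Heuristically this holds because the mass of collisions after $s$ within a cluster is dominated by $t\le s\cdot N^{\epsilon}$, while $S_t/\sqrt t$ changes by an amount that does not vanish. I therefore expect to have to replace ``$t_{i+1}\le Kt_i$'' by a cut-off ``$t_{i+1}\le t_i N^{\epsilon}$'' and to handle the spatial coordinate more carefully. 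The positions are not constant within a cluster, but the limit sees only $\int f\,d\nu$ at the cluster's starting time, so the averaging must come from the cluster start rather than from the later points.

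\textbf{Between clusters.} Consecutive clusters are separated by large ratios. By Proposition~\ref{prop:asymp indep} together with tightness (which controls $|\bm S_N|\le r_0\sqrt N$), the expectation then factorises over clusters up to an error $\sqrt{N/M}+\rho_N$, which is summable against the bounds of Lemma~\ref{lem:col est}.

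Granting these approximations, the contribution of a single cluster starting at time $s$ is computed as follows. By Corollary~\ref{cor:sum to int}, the first collision contributes $\frac{c_*}{2s}\int\nu_{\Gamma/2}(dx)$. Conditioned on the spatial mark $x$ and the time $u=\log s/\log N$, the remaining collisions in the cluster behave like a fresh collision count run for time $N^{\epsilon}$. Its Laplace transform at $\lambda=f(x,u)$ is evaluated by Corollary~\ref{cor:col num conv}. The result is the one-cluster kernel
\[
\frac{c_*}{2}\,\frac{dt}{\log N}\cdot\log N\cdot\Bigl(e^{-f/\log N}\cdot\text{(rest)}-1\Bigr)\approx -\frac{f(x,t)}{f(x,t)t+\Charact}\,dt\,\nu_{\Gamma/2}(dx).
\]
Here the factor $t$ in the denominator comes from the renewal structure: the collisions preceding time $s$ have total size of order $t\log N$, and the identity $\frac{\Charact}{\lambda t+\Charact}$ of Corollary~\ref{cor:col num conv} encodes exactly this. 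Summing over ordered families of $k$ well-separated clusters gives $\frac1{k!}(-\int\!\!\int\cdots)^k$, using the logarithmic Riemann-sum lemma from the appendix to turn $\sum_s\frac{1}{s\log N}g(\log s/\log N)$ into $\int g\,dt$. Summing over $k$ yields the exponential.

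The main obstacle is the combinatorial bookkeeping. We must show that the expansion terms reorganise exactly into a product of cluster weights, with the correct constants, and in particular produce $\frac{f}{ft+\Charact}$ rather than $\frac{f}{\Charact}$. A cleaner route would be to first prove that $\sum_n(\ldots)$ over clusters converges and then use the renewal decomposition at cluster starts. I would first try to establish directly the recursion $\Phi_N(t)\approx 1-\int_0^t \frac{f}{\Charact}\Phi\,$ for the restricted Laplace transform $\Phi_N(t)=E[e^{-\Pi_N(f\mathbf 1_{[0,t]})}]$, and then solve it.
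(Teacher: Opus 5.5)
\textbf{Gap in the one-cluster kernel.} The decisive gap is in the one-cluster kernel, which is exactly where the factor $ft$ in $\frac{f}{ft+\Charact}$ must come from.

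You state that, after the cluster start $s=N^{u}$, the remaining collisions behave like a fresh collision count run for time $N^{\epsilon}$. Corollary~\ref{cor:col num conv} then gives $\Charact/(\lambda\epsilon+\Charact)\to 1$, so the kernel you would actually obtain is $-f/\Charact$. Your recursion $\Phi_N(t)\approx 1-\int_0^t\frac{f}{\Charact}\Phi$ has exactly this kernel. It solves to $\exp\bigl(-\Charact^{-1}\int\!\!\int f\,d\nu_{\Gamma/2}\,dt\bigr)$, the Laplace functional of the deterministic measure $\Charact^{-1}dt\,\nu_{\Gamma/2}(dx)$, which is the wrong limit.

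Your explanation of the $t$ via ``the collisions preceding time $s$'' is also incorrect. Those collisions belong to earlier clusters and are accounted for by the other factors of the product. The $t$ comes from the size of the cluster itself:
\begin{itemize}
\item Starting from a collision at time $s=N^{t}$, lags in $[s^{1-\varepsilon},sN^{\epsilon}]$ carry only $O(\varepsilon+\epsilon)$ of the normalized mass $\frac{1}{\log N}\sum\frac{1}{\text{lag}}$.
\item By the Markov property at $s$, the collisions at lags $\le s^{1-\varepsilon}$ form a fresh collision count run for time $\lfloor s^{1-\varepsilon}\rfloor=N^{(1-\varepsilon)t+o(1)}$. This count is of order $(1-\varepsilon)t\log N$.
\item Corollary~\ref{cor:col num conv} with time parameter $(1-\varepsilon)t$ gives $\psi(f,(1-\varepsilon)t)=\Charact/((1-\varepsilon)tf+\Charact)$.
\item The cluster weight in the expansion is $\func_N(S^1_s,s)\mathbf 1_D(\bm S_s)\,h^{(\varepsilon)}_{N,s}(\bm S\circ\theta_s)\approx-\frac{f}{\log N}\,\psi$. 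That is, the first factor is linearized and multiplied by the Laplace transform of the rest.
\item Summing $\frac{c_*}{2s}$ times this over $s$, and using $\frac{c_*}{2}=\Charact^{-1}$, gives $-\int\!\!\int\frac{f}{ft+\Charact}\,d\nu_{\Gamma/2}\,dt$ (Lemma~\ref{lem:clust Lap approx} and the lemmas following it).
\end{itemize}

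\textbf{Missing structural idea: the cluster scale.} This is also what resolves the ``combinatorial bookkeeping'' you flag. Clusters defined by chaining ratios ($t_{i+1}\le Kt_i$ or $t_{i+1}\le t_iN^{\epsilon}$) have an extent that depends on their internal points, and they are not product sets. So the inner sums neither factor over clusters nor resum into a single exponential.

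The paper anchors each cluster at its start $a$ and lets it consist of all expansion times in the sublinear window $(a,a+\lfloor a^{1-\varepsilon}\rfloor]$. The next start is any time beyond this window. Summing over all subsets of each window then gives exactly $E\bigl[\prod_l\func_N(S^1_{a_l},a_l)\mathbf 1_D(\bm S_{a_l})h^{(\varepsilon)}_{N,a_l}(\bm S\circ\theta_{a_l})\bigr]$, summed over $(a_l)\in\CutTimes_L(-\varepsilon)$ (Proposition~\ref{prop:chaos}).

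The same window makes the remaining steps work:
\begin{itemize}
\item On the window the walk moves $O(a^{1/2-\varepsilon/4})=o(\sqrt a)$ with high probability. So $\phi_N$ is frozen, and $f_N(S^1_k,k)$ may be replaced by $f_N(S^1_a,a)$ (Proposition~\ref{prop:time approx}). Contrary to your claim, positions \emph{are} essentially constant within a cluster, and the mark is simply the rescaled position at its start.
\item Configurations in which some gap $a_i-a_{i-1}$ lies in $(a_{i-1}^{1-\varepsilon},a_{i-1}^{1+\varepsilon}]$ cost $o(1)$ as $\varepsilon\to0$ (Proposition~\ref{prop:small gaps}).
\item Once those are discarded, the end $b=a+\lfloor a^{1-\varepsilon}\rfloor$ of a cluster and the next start $a'$ satisfy $b/a'\to0$. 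So Proposition~\ref{prop:asymp indep} decouples consecutive clusters with vanishing error.
\end{itemize}

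Finally, your proposal omits the early cutoff at $N^{\delta}$ (Proposition~\ref{prop:early col}). It is needed so that $\rho_b$, $a^{-\varepsilon}$ and the window lengths are controlled uniformly.
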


We prove the above theorem through a sequence of approximation steps developed in the following subsections.

\subsection{Approximation by an early-time cutoff} 
\label{sec:cutoff early}

In this subsection,
we introduce an early-time cutoff of the collision measure 
and show that the resulting Laplace transform approximates that of $\Pi_N$;
see Proposition~\ref{prop:early col}.
  
For each $\delta \in (0,1)$ and $N \geq 2$,
we define a random measure $\Pi_N^{(\delta)}$ on $\RN^2 \times [0,\infty)$ by 
\begin{equation} \label{eq:tilde Pi truncated}
    \Pi_N^{(\delta)}(dx\, dt)
    \coloneqq 
    \frac{1}{\log N} 
    \sum_{n = \lceil N^{\delta} \rceil}^\infty  
    \delta_{\phi_N(S^1_n, n)}(dx\, dt)\, 
    \mathbf{1}_D(\bm{S}_n),
\end{equation}
where we recall the scaling map $\phi_N$ from \eqref{eq:phi} and
$D=\{(x,x)\in\ZN^4\mid x\in\ZN^2\}$.
Compared to $\Pi_N$,
the random measure $\Pi_N^{(\delta)}$ ignores early collisions before time $N^\delta$.
The following proposition shows that the error caused by this cutoff
vanishes as $\delta\downarrow0$ after $N\to\infty$.

\begin{prop}
\label{prop:early col}
    We have
    \begin{align}
        \limsup_{\delta \to 0}
        \limsup_{N \to \infty}
        \left|
            E\!\left[e^{-\Pi_N^{(\delta)}(f)}\right]
            -
            E\!\left[e^{-\Pi_N(f)}\right]
        \right|
        = 0.
        \label{eq:conv of lap truncated}
    \end{align}
\end{prop}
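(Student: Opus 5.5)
The difference $\Pi_N(f)-\Pi_N^{(\delta)}(f)$ is nonnegative and only collects collisions at times $n<N^\delta$. Since $|e^{-a}-e^{-b}|\le |a-b|\wedge 1$ for $a,b\ge0$, we have
\begin{equation}
    \left|E\!\left[e^{-\Pi_N^{(\delta)}(f)}\right]-E\!\left[e^{-\Pi_N(f)}\right]\right|
    \le E\!\left[\Pi_N(f)-\Pi_N^{(\delta)}(f)\right]
    \le \frac{\|f\|_\infty}{\log N}\sum_{n=1}^{\lceil N^\delta\rceil-1} P(S^1_n=S^2_n).
\end{equation}
So it suffices to bound the expected number of collisions before time $N^\delta$. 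The plan is therefore a pure first-moment estimate; no structure of $f$ beyond boundedness is needed.

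To bound the first moment, I use Lemma~\ref{lem:col est}, either \eqref{eq:col est 1} with $m=1$ or \eqref{eq:col est 2} with $m=1$. This gives $P(S^1_n=S^2_n)\le c_1/n$, and hence
\begin{equation}
    \sum_{n=1}^{\lceil N^\delta\rceil-1} P(S^1_n=S^2_n)\le c_1\bigl(1+\log N^\delta\bigr)=c_1(1+\delta\log N).
\end{equation}
Dividing by $\log N$, the right-hand side above is at most $c_1\|f\|_\infty\bigl(\delta+1/\log N\bigr)$. Letting $N\to\infty$ leaves $c_1\|f\|_\infty\delta$, which tends to $0$ as $\delta\to0$. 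This proves \eqref{eq:conv of lap truncated}.

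There is essentially no obstacle here. The only points to check are that both Laplace transforms are finite and that the elementary inequality $|e^{-a}-e^{-b}|\le|a-b|$ applies. Both follow because $f\ge0$ makes $\Pi_N(f)$ and $\Pi_N^{(\delta)}(f)$ nonnegative, and these quantities are almost surely finite since $f$ has compact time support $[0,\kappa-1]$ and the collision count up to $N^{\kappa-1}$ has finite mean, again by Lemma~\ref{lem:col est}.
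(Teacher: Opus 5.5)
Your proposal is correct and follows essentially the same route as the paper: the Lipschitz bound $|e^{-x}-e^{-y}|\le|x-y|$ for $x,y\ge0$, followed by the first-moment estimate $P(S^1_n=S^2_n)\lesssim 1/n$ from Lemma~\ref{lem:col est}, which gives a bound of order $\|f\|_\infty\log\lceil N^\delta\rceil/\log N\to\|f\|_\infty\delta$ up to a constant. Your indexing of the excluded collisions ($n\le\lceil N^\delta\rceil-1$) is slightly sharper than the paper's, but this makes no difference.
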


\begin{proof}
    By the mean value theorem,
    we have 
    \begin{equation} \label{eq:e^-x conti}
        |e^{-x} - e^{-y}| \leq |x-y|, \qquad \forall x,y \geq 0.
    \end{equation}
    This yields 
    \begin{align}
         \left|
            E\!\left[e^{-\Pi_N^{(\delta)}(f)}\right]
            -
            E\!\left[e^{-\Pi_N(f)}\right]
        \right|
    &\leq 
    E\bigl[ |\Pi_N^{(\delta)}(f)- \Pi_N(f)| \bigr]\\
    &\leq 
    \frac{\|f\|_\infty}{\log N} 
    \sum_{n =1}^{ \lceil N^{\delta} \rceil}  
    P(S^1_n = S^2_n)\\
    &\lesssim 
    \frac{\|f\|_\infty}{\log N} 
    \log \lceil N^{\delta} \rceil,
    \end{align}
    where we used Lemma~\ref{lem:col est} to obtain the last inequality.
    Since $\log\lceil N^\delta\rceil/\log N \to \delta$ as $N \to \infty$, 
    we obtain the desired result.
\end{proof}

\subsection{Cluster decomposition}
\label{sec:clust decomp}

By Proposition~\ref{prop:early col},
it remains to identify, for each fixed small $\delta>0$,
the limit of the Laplace transform of $\Pi_N^{(\delta)}(f)$ as $N\to\infty$.
To this end, we rewrite the Laplace transform by grouping nearby collision times into clusters.
For each $N\geq2$, $x \in \ZN^2$, and $t \in \NN$,
set 
\begin{equation}
    f_N(x, t) 
    \coloneqq 
    \frac{1}{\log N}\,f\bigl( \phi_N(x, t) \bigr),
    \qquad
    \func_N(x, t) 
    \coloneqq 
    e^{-f_N(x, t)} - 1.
\end{equation}
For each $\varepsilon \in (0,1/2)$, $N \geq 2$, $a \in \NN$, 
and $\bm{X}=(X^1, X^2)$ with $X^1,X^2\colon \NN \cup \{0\} \to \ZN^2$,
we write
\begin{gather}
    h_{N, a}^{(\varepsilon)}(\bm{X}) 
    \coloneqq 
    \exp
    \left(
        {-} \sum_{k=1}^{\lfloor a^{1-\varepsilon} \rfloor} 
        f_N(X^1_k, k+a)
        \mathbf{1}_D\!\left(\bm{X}_k\right)
    \right),
\end{gather}
where we denote $\bm{X}_k\coloneqq (X_k^1,X_k^2)$. 
We also define, for each $N\geq2$, $L \geq 1$, $\delta\in(0,1)$, and $u\in(-1,1)$,
\begin{align} 
    \CutTimes_L(u)
    &\coloneqq
    \CutTimes_L(u,\delta,N)\\
    &\coloneqq 
    \{(a_1, \ldots, a_L) \in [N^\delta, N^{\kappa }]^L \cap \ZN^L
        \mid 
        a_l > a_{l-1} + \lfloor a_{l-1}^{1+u} \rfloor\quad  
        \forall  l \geq 2
    \}.
    \label{eq:Cut time}
\end{align}

The following is the cluster decomposition.

\begin{prop} 
\label{prop:chaos}
    Write $\bm{S}_\cdot \coloneqq (S^1_\cdot, S^2_\cdot)$ and consider it as a random map from $\NN \cup \{0\}$ to $\RN^4$.
    Let $(\theta_n)_{n \geq 0}$ be the time-shift operator of $(\bm{S}_n)_{n \geq 1}$, i.e., a family of maps satisfying $\bm{S}_k \circ \theta_n = \bm{S}_{k+n}$ for each $n,k \geq 0$. 
    Then, for each $N\geq2$, $\delta\in(0,1)$, and $\varepsilon\in(0,1/2)$,
    \begin{align}
        E\!\left[e^{-\Pi_N^{(\delta)}(f)}\right]
        &= 
        1
        + 
        \sum_{L=1}^{\infty}
        \sum_{(a_l)_{l=1}^L \in \CutTimes_L(-\varepsilon)}\!
        E\!
        \left[
            \prod_{l=1}^{L} 
            \func_N(S^1_{a_l}, a_l) 
            \mathbf{1}_D(\bm{S}_{a_l})
            h^{(\varepsilon)}_{N,a_l}(\bm{S} \circ \theta_{a_l})
        \right].
    \end{align}
\end{prop}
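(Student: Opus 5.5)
The identity is purely algebraic: I would prove it pathwise, before taking expectations. Write $x_n \coloneqq \func_N(S^1_n,n)\mathbf{1}_D(\bm{S}_n)$. Since $e^{-f_N(S^1_n,n)\mathbf{1}_D(\bm{S}_n)} = 1 + x_n$, we have
\begin{equation}
    e^{-\Pi_N^{(\delta)}(f)} = \prod_{n \geq \lceil N^\delta\rceil} (1+x_n).
\end{equation}
By Assumption~\ref{assum:test}, $f(\cdot,t)=0$ for $t>\kappa-1$. Hence $x_n = 0$ whenever $\log n/\log N > \kappa-1$, that is, for $n > N^{\kappa-1}$. So the product is effectively finite, with all nontrivial indices in $[N^\delta, N^{\kappa-1}] \subseteq [N^\delta,N^\kappa]$. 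Every sum appearing below is therefore a finite sum of bounded terms, and expectations can be exchanged with the sums freely.

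The key step is a ``first chosen index, then freeze a window'' expansion. For any finite family and any starting index $m$, expanding the product and sorting terms by the smallest index that is chosen gives
\begin{equation}
    \prod_{n\geq m}(1+x_n) = 1 + \sum_{a \geq m} x_a \prod_{n > a}(1+x_n).
\end{equation}
Given such an $a$, I would split the remaining product into the window $a < n \leq a + \lfloor a^{1-\varepsilon}\rfloor$ and the tail $n > a+\lfloor a^{1-\varepsilon}\rfloor$. The window factor is kept unexpanded:
\begin{equation}
    \prod_{a<n\leq a+\lfloor a^{1-\varepsilon}\rfloor}(1+x_n) = \exp\Bigl(-\sum_{k=1}^{\lfloor a^{1-\varepsilon}\rfloor} f_N(S^1_{a+k},a+k)\mathbf{1}_D(\bm{S}_{a+k})\Bigr) = h^{(\varepsilon)}_{N,a}(\bm{S}\circ\theta_a).
\end{equation}
This uses $\bm{S}_k\circ\theta_a = \bm{S}_{a+k}$ and the definition of $h^{(\varepsilon)}_{N,a}$ with $X^1_k = S^1_{a+k}$. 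The tail factor is the same type of product, now started at $m' = a+\lfloor a^{1-\varepsilon}\rfloor+1$. I would then apply the same expansion to it again.

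Iterating yields $1 + \sum_{L\geq1}\sum_{a_1<\cdots<a_L}\prod_{l=1}^L x_{a_l}\, h^{(\varepsilon)}_{N,a_l}(\bm{S}\circ\theta_{a_l})$. The constraints on the indices are exactly $a_1\geq\lceil N^\delta\rceil$ and $a_l > a_{l-1}+\lfloor a_{l-1}^{1-\varepsilon}\rfloor$, which is the definition of $\CutTimes_L(-\varepsilon)$. The upper restriction $a_l\leq N^{\kappa}$ costs nothing, because $x_a=0$ for $a>N^{\kappa-1}$. To make this rigorous, I would argue by induction on the number of indices in $[m, N^{\kappa-1}]$, i.e.\ a backward induction in $m$. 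The statement to prove is that $\prod_{n\geq m}(1+x_n)$ equals the corresponding cluster sum with $a_1\geq m$. This holds trivially (both sides equal $1$) when $m>N^{\kappa-1}$. The recursion also terminates, since each step moves $m$ strictly forward; consequently $L$ is bounded by roughly $N^{\kappa}$ and only finitely many terms are nonzero.

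Finally, taking expectations and using linearity over the finite sums gives the stated formula. I expect no real obstacle. The only delicate point is bookkeeping: the window factor must be matched exactly with $h^{(\varepsilon)}_{N,a}$, including the shift $k+a$ in its time argument, and the strict inequality in the definition of $\CutTimes_L$ must be checked to correspond to starting the next expansion at $a_{l-1}+\lfloor a_{l-1}^{1-\varepsilon}\rfloor+1$.
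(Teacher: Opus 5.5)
Your proof is correct. It establishes the same pathwise identity as the paper, but it organizes the combinatorics differently.

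The paper's route has three steps:
\begin{enumerate}
  \item It first expands $\prod_n(1+x_n)$ completely into a sum over all ordered sets $t_1<\cdots<t_m$ in $[\lceil N^\delta\rceil,\lfloor N^\kappa\rfloor]$.
  \item It then associates each such set uniquely with cluster starting times $(a_l)_{l=1}^L\in\CutTimes_L(-\varepsilon)$, together with the points lying in the windows $(a_l,\,a_l+\lfloor a_l^{1-\varepsilon}\rfloor]$. It rearranges the sum accordingly, taking this unique association as evident.
  \item Finally, it re-sums the within-window terms with the same product expansion to recover $h^{(\varepsilon)}_{N,a_l}(\bm{S}\circ\theta_{a_l})$.
\end{enumerate}

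Your route applies the telescoping identity $\prod_{n\ge m}(1+x_n)=1+\sum_{a\ge m}x_a\prod_{n>a}(1+x_n)$ recursively, and never expands the window factor. This produces the cluster sum directly, with no full expansion and no re-summation. The uniqueness of the cluster decomposition is built into the recursion, since the next cluster start is simply the first chosen index after the current window. The backward induction on $m$, together with $x_n=0$ for $n>N^{\kappa-1}$, also handles finiteness and the interchange with expectation cleanly.

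What the paper's route buys in exchange is an explicit bijection between arbitrary sets of collision times and cluster configurations. That bijection is the structural picture (clusters of collisions) motivating the decomposition and the estimates that follow it.
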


Before proving Proposition~\ref{prop:chaos},
we explain the interpretation of the decomposition above.
Each ordered collection of times appearing in the expansion is partitioned into clusters, 
where $L$ is the number of clusters and $a_l$ is the starting time of the $l$-th cluster.
The sum over $(a_l)_{l=1}^L$ therefore runs over all possible choices of cluster starting times.
For each $l$, 
the factor $\func_N(S^1_{a_l},a_l)\mathbf{1}_D(\bm{S}_{a_l})$
represents the contribution at the beginning of the cluster, while
$h_{N,a_l}^{(\varepsilon)}$ collects the contributions from the subsequent collisions within that cluster. 
Thus, 
the proposition reorganizes the Laplace transform according to the cluster structure of collision times.

\begin{proof} [Proof of Proposition~\ref{prop:chaos}]
    By the definition of $\Pi_N^{(\delta)}$
    and the support assumption on $f$, 
    we have 
    \begin{equation} \label{eq:chaos1}
        E\!\left[e^{-\Pi_N^{(\delta)}(f)}\right]
        = 
        E\!\left[
            \exp\!\left(
                -\sum_{n = \lceil N^{\delta} \rceil}^{\lfloor N^\kappa \rfloor}  
                f_N(S^1_n, n) 
                \mathbf{1}_D(\bm{S}_n)
            \right)
        \right].
    \end{equation}
    We then apply the following expansion to the right-hand side: 
    for a function $Q\colon [0,\infty) \to \RN$ and a finite subset $I$ of $[0,\infty)$,
    \begin{equation} \label{eq:simple chaos}
        e^{\sum_{t \in I} Q(t)} = \prod_{t \in I} \left( 1+
(e^{ Q(t)} -1)     \right)  = 1 + \sum_{m=1}^\infty 
        \sum_{\substack{t_1,\ldots, t_m \in I\\ t_1 < \cdots < t_m}}
        \prod_{i=1}^m \left(e^{Q(t_i)} - 1\right).
    \end{equation} 
    We apply this identity pointwise with $Q(t)=-f_N(S^1_t,t)\mathbf{1}_D(\bm{S}_t)$. 
    Since $e^{Q(t)}-1=\func_N(S^1_t,t)\mathbf{1}_D(\bm{S}_t)$, we have
\begin{equation} \label{eq:chaos2}
        E\!\left[e^{-\Pi_N^{(\delta)}(f)}\right]
        = 
        1 + 
        \sum_{m=1}^\infty    
        \sum_{\lceil N^\delta \rceil \leq t_1 < \cdots < t_m \leq \lfloor N^{\kappa} \rfloor}
        E\!\left[
            \prod_{i=1}^m
            \func_N(S^1_{t_i}, t_i) 
            \mathbf{1}_D(\bm{S}_{t_i})
        \right].
    \end{equation}
    We set, for each $a \in \NN$ and $k \in \NN \cup \{0\}$,
    \begin{equation} \label{eq:chaos3}
        \Cluster_k^a(-\varepsilon) 
        \coloneqq 
        \{(s_0, \ldots, s_k) \in \ZN^{k+1} 
            \mid     
            a = s_0 < s_1 < \ldots < s_k \leq a + \lfloor a^{1-\varepsilon} \rfloor
        \}.
    \end{equation}
   Given $\lceil N^\delta \rceil\leq t_1 < \cdots < t_m\leq \lfloor N^{\kappa}\rfloor$, we uniquely associate $(a_l)_{l=1}^L \in \CutTimes_L(-\varepsilon)$ and for each $l$, $(s_{l,j})_{j=0}^{k_l} \in \Cluster_{k_l}^{a_l}(-\varepsilon)$ such that $(t_i)_{i=1}^m = ((s_{l,j})_{j=0}^{k_l})_{l=1}^L$. Following this correspondence, by rearranging the sum on the right-hand side of \eqref{eq:chaos2}, 
    we obtain
    \begin{align} \label{eq: want to de-coupling 1}
        E\!\left[e^{-\Pi_N^{(\delta)}(f)}\right]
        &= 
        1
        + 
        \sum_{L=1}^{\infty}
        \sum_{(a_l)_{l=1}^L \in \CutTimes_L(-\varepsilon)}
        \sum_{\substack{0 \leq k_l < \infty\\ l = 1, \ldots, L}}
        \sum_{(s_{l,j})_{j=0}^{k_l} \in \Cluster_{k_l}^{a_l}(-\varepsilon)}
        \hspace{-2mm}
        E\!\left[
            \prod_{l=1}^{L} 
            \prod_{j=0}^{k_l}
            \func_N(S^1_{s_{l, j}}, s_{l, j}) 
            \mathbf{1}_D(\bm{S}_{s_{l,j}})
        \right].
    \end{align}
    Using the expansion \eqref{eq:simple chaos} again, 
    we deduce that, for each $(a_l)_{l=1}^L \in \CutTimes_L(-\varepsilon)$,
    \begin{align}
        &\sum_{\substack{0 \leq k_l < \infty\\ l = 1, \ldots, L}}
        \sum_{(s_{l,j})_{j=0}^{k_l} \in \Cluster_{k_l}^{a_l}(-\varepsilon)}
        E\!\left[
            \prod_{l=1}^{L} 
            \prod_{j=0}^{k_l}
            \func_N(S^1_{s_{l, j}}, s_{l, j}) 
            \mathbf{1}_D(\bm{S}_{s_{l,j}})
        \right]\\
        &=
        E\!\left[
            \prod_{l=1}^{L} 
            \sum_{k_l = 0}^\infty
            \sum_{(s_{l,j})_{j=0}^{k_l} \in \Cluster_{k_l}^{a_l}(-\varepsilon)}
            \prod_{j=0}^{k_l}
            \func_N(S^1_{s_{l, j}}, s_{l, j}) 
            \mathbf{1}_D(\bm{S}_{s_{l,j}})
        \right]\\
        &= 
        E\!\left[
            \prod_{l=1}^{L} 
            \func_N(S^1_{a_l}, a_l) 
            \mathbf{1}_D(\bm{S}_{a_l})
            \exp\biggl(
                - \sum_{k=a_l+1}^{a_l + \lfloor a_l^{1-\varepsilon} \rfloor} 
                f_N(S^1_k, k) 
                \mathbf{1}_D(\bm{S}_k)
            \biggr)
        \right].
    \end{align}
    Moreover, using the shift operator, we have
    \begin{equation}
        \exp\biggl(
            - \sum_{k=a_l+1}^{a_l + \lfloor a_l^{1-\varepsilon} \rfloor} 
            f_N(S^1_k, k) 
            \mathbf{1}_D(\bm{S}_k)
        \biggr)
        =
        h^{(\varepsilon)}_{N,a_l}(\bm{S} \circ \theta_{a_l}),
    \end{equation}
    which completes the proof.
\end{proof}

\subsection{Negligible cluster configurations}
\label{sec:neg config}
In this subsection, we show that configurations with excessively many clusters, clusters that are too close to each other, or clusters lying outside a diffusive annulus give negligible contributions to the cluster expansion; see Propositions~\ref{prop:many clust}, \ref{prop:small gaps}, and~\ref{prop:annulus}. 

In the arguments below, 
we often use the following elementary bounds: 
\begin{equation} \label{eq:bound f,g,h}
    \|f_N\|_\infty \vee \|\func_N\|_\infty 
    \leq 
    \frac{\|f\|_{\infty}}{\log N},
    \qquad
    \|h_{N, a}^{(\varepsilon)}\|_\infty \leq 1.
\end{equation}

We first show that the cluster expansion can be truncated at a finite number
of clusters, uniformly in $N$, $\delta$, and $\varepsilon$.

\begin{prop} [Ignoring excessively many clusters]
    \label{prop:many clust}
    For each $N \geq 2$, $\varepsilon \in (0,1/2)$, and $\delta \in (0,1)$,
    \begin{align}
        \sum_{L=M+1}^{\infty}
        \sum_{(a_l)_{l=1}^L \in \CutTimes_L(-\varepsilon)}
        E\!\left[
            \prod_{l=1}^{L} 
            \func_N(S^1_{a_l}, a_l) 
            \mathbf{1}_D(\bm{S}_{a_l})
            h_{N,a_l}^{(\varepsilon)}(\bm{S} \circ \theta_{a_l})
        \right]
        = o_f(1;M \to \infty).
    \end{align}
\end{prop}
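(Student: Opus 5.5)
Since $|\func_N|\le \|f\|_\infty/\log N$ and $0\le h^{(\varepsilon)}_{N,a}\le 1$ by \eqref{eq:bound f,g,h}, I will bound the tail of the series by absolute values and reduce to a geometric series. For fixed $L$, the absolute value of the $L$-th term is at most
\begin{equation}
    \Bigl(\frac{\|f\|_\infty}{\log N}\Bigr)^{L}
    \sum_{(a_l)_{l=1}^L \in \CutTimes_L(-\varepsilon)}
    E\!\left[\prod_{l=1}^L \mathbf{1}_D(\bm{S}_{a_l})\right].
\end{equation}
Every element of $\CutTimes_L(-\varepsilon)$ is a strictly increasing sequence $\lceil N^\delta\rceil\le a_1<\cdots<a_L\le\lfloor N^\kappa\rfloor$. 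I will enlarge the sum to all such increasing sequences in $(0,\lfloor N^\kappa\rfloor]$ and apply \eqref{eq:col est 2} from Lemma~\ref{lem:col est} with $N^\kappa$ in place of $N$. This gives a bound $(c_2\log N^\kappa)^L=(c_2\kappa\log N)^L$. The hypothesis $m\le N$ in Lemma~\ref{lem:col est} is harmless: the sum is empty when $L>\lfloor N^\kappa\rfloor$, and otherwise the lemma applies directly. Alternatively, one can apply \eqref{eq:col est 1} together with $\sum_{t\le N^\kappa}t^{-1}\le 1+\kappa\log N$, which works for all $L$.

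Combining the two bounds, the $L$-th term is at most $q^L$ with $q\coloneqq c_2\kappa\|f\|_\infty$. This is uniform in $N$, $\delta$ and $\varepsilon$, and \eqref{eq:norm of f} gives $q<1$. Hence the tail over $L\ge M+1$ is at most $q^{M+1}/(1-q)$. This depends only on $f$ and tends to zero as $M\to\infty$, which is the claimed $o_f(1;M\to\infty)$.

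The only delicate point is the constant bookkeeping. If one uses the $(1+\kappa\log N)^L$ form, the extra $1$ must be absorbed. This is exactly why Assumption~\ref{assum:test} is phrased with $c_2\kappa$: from $\log N\ge\log 2$ one gets $1+\kappa\log N\le C\kappa\log N$ for some constant $C$ depending only on the jump distribution. I take $c_2$ in Lemma~\ref{lem:col est} to already include such constants, so that the bound $(c_2\kappa\log N)^L$ holds for the times in $[1,N^\kappa]$. With that, the argument is a direct dominated geometric-series estimate and I expect no real obstacle.
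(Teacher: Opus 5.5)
Your proposal is correct and follows essentially the same route as the paper: bound each term using \eqref{eq:bound f,g,h}, apply \eqref{eq:col est 2} on $[1,N^\kappa]$ to get $(c_2\kappa\log N)^L$, and sum the geometric series with ratio $c_2\kappa\|f\|_\infty<1$ from \eqref{eq:norm of f}. Your remarks on the condition $m\le N$ and the $(1+\kappa\log N)$ bookkeeping are harmless clarifications. The direct application of \eqref{eq:col est 2} with $\lfloor N^\kappa\rfloor$ in place of $N$ already gives the bound without any adjustment of $c_2$.
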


\begin{proof}
    By \eqref{eq:bound f,g,h}, 
    we deduce that for any $N\geq 2$,
    \begin{align}
        &\Biggl|
        \sum_{L=M+1}^{\infty}
        \sum_{(a_l)_{l=1}^L \in \CutTimes_L(-\varepsilon)}
        E\!
        \left[
            \prod_{l=1}^{L} 
            \func_N(S^1_{a_l}, a_l) 
            \mathbf{1}_D(\bm{S}_{a_l})
            h_{N,a_l}^{(\varepsilon)}(\bm{S} \circ \theta_{a_l})
        \right]
        \Biggr|
        \\
        &\leq
        \sum_{L=M+1}^{\infty}
        \Bigl(\frac{\|f\|_\infty}{\log N}\Bigr)^{L}
        \sum_{(a_l)_{l=1}^L \in \CutTimes_L(-\varepsilon)}
        E\!
        \left[
            \prod_{l=1}^{L} 
            \mathbf{1}_D(\bm{S}_{a_l})
        \right]\\
        &\leq
        \sum_{L=M+1}^{\infty}
        (c_2 \kappa \|f\|_\infty)^L
        \\
        &\lesssim_f  (c_2 \kappa \|f\|_\infty)^{M+1},
    \end{align}
    where we used \eqref{eq:col est 2} and \eqref{eq:norm of f} to obtain the second and third inequalities, respectively. 
    Thus, the desired result follows.
\end{proof}

Recall that, in the cluster decomposition, $(a_l)_{l=1}^L$ are the cluster-starting times. 
We next show that the sum may be restricted to configurations with sufficiently well-separated clusters, 
namely, those satisfying $a_l-a_{l-1}>\lfloor a_{l-1}^{1+\varepsilon}\rfloor$ for every $l=2,\ldots,L$.

\begin{prop} [Ignoring close clusters] \label{prop:small gaps}
    Recall the set $\CutTimes_L(\varepsilon)$ from \eqref{eq:Cut time}:
    \begin{equation}
        \CutTimes_L(\varepsilon)
        \coloneqq 
        \{(a_1, \ldots, a_L) \in [N^\delta, N^{\kappa }]^L \cap \ZN^L
            \mid 
            a_l > a_{l-1} + \lfloor a_{l-1}^{1+\varepsilon} \rfloor\quad \forall l \geq 2
        \}.
    \end{equation}
    Then, for each $\delta \in (0,1)$, $\varepsilon \in (0,1/2)$, and $L \geq 1$,
    \begin{align}
        &\sum_{(a_l)_{l=1}^L \in \CutTimes_L(-\varepsilon)}
        E\!\left[
            \prod_{l=1}^{L} 
            \func_N(S^1_{a_l}, a_l) 
            \mathbf{1}_D(\bm{S}_{a_l})
            h_{N,a_l}^{(\varepsilon)}(\bm{S} \circ \theta_{a_l})
        \right]
        \\
        &=
        \sum_{(a_l)_{l=1}^L \in \CutTimes_L({\varepsilon})}
        E\!\left[
            \prod_{l=1}^{L} 
            \func_N(S^1_{a_l}, a_l) 
            \mathbf{1}_D(\bm{S}_{a_l})
            h_{N,a_l}^{(\varepsilon)}(\bm{S} \circ \theta_{a_l})
        \right]
        +o_{f,\delta,L}(1;N \to \infty\, \triangleright\, \varepsilon \to 0).
    \end{align}
\end{prop}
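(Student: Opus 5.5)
Since $\CutTimes_L(\varepsilon)\subseteq\CutTimes_L(-\varepsilon)$, the difference of the two sums runs over $(a_l)\in\CutTimes_L(-\varepsilon)\setminus\CutTimes_L(\varepsilon)$. These are the configurations for which some index $l\in\{2,\ldots,L\}$ satisfies
\begin{equation}
    a_{l-1}+\lfloor a_{l-1}^{1-\varepsilon}\rfloor < a_l \leq a_{l-1}+\lfloor a_{l-1}^{1+\varepsilon}\rfloor .
\end{equation}
By a union bound over the at most $L-1$ choices of such $l$, it suffices to fix $l$ and bound the sum over configurations with this property at index $l$.

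The first step bounds each summand in absolute value. Using \eqref{eq:bound f,g,h}, we have $|\func_N|\le\|f\|_\infty/\log N$ and $h\le1$. Each term is therefore at most $(\|f\|_\infty/\log N)^L\,E[\prod_{l'}\mathbf 1_D(\bm S_{a_{l'}})]$. By \eqref{eq:col est 1}, this is at most
\begin{equation}
    \Bigl(\frac{c_1\|f\|_\infty}{\log N}\Bigr)^L\prod_{l'=1}^L\frac{1}{a_{l'}-a_{l'-1}}, \qquad a_0\coloneqq 0 .
\end{equation}

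The second step carries out the summation. For indices $l'\neq l$ we drop the constraints beyond $N^\delta\le a_{l'}\le N^\kappa$ and increasing order. Each free sum $\sum_{a_{l'}}(a_{l'}-a_{l'-1})^{-1}$ over a gap in $[1,N^\kappa]$ is then at most $1+\kappa\log N$. For the special index $l$, with $a_{l-1}=a$ fixed and $a\ge N^\delta$, write $g=a_l-a$. The sum over $g$ is bounded by
\begin{equation}
    \sum_{g=\lfloor a^{1-\varepsilon}\rfloor+1}^{\lfloor a^{1+\varepsilon}\rfloor}\frac1g\le \log\frac{a^{1+\varepsilon}}{a^{1-\varepsilon}}+O(1)=2\varepsilon\log a+O(1)\le 2\varepsilon\kappa\log N+O(1).
\end{equation}
Summing iteratively from $a_L$ down to $a_1$ (the later indices $a_{l'}$ with $l'>l$ are summed first, each contributing at most $1+\kappa\log N$ regardless of the preceding value), the total contribution is at most
\begin{equation}
    (c_1\|f\|_\infty)^L(\log N)^{-L}(1+\kappa\log N)^{L-1}\bigl(2\varepsilon\kappa\log N+O(1)\bigr)\lesssim_{f,L}\varepsilon+\frac{1}{\log N},
\end{equation}
with the implicit constant depending on $\kappa=\kappa(f)$.

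Finally we take limits: first $\limsup_{N\to\infty}$, which gives a bound $\lesssim_{f,L}\varepsilon$, and then $\varepsilon\to0$. This gives the claimed error $o_{f,\delta,L}(1;N\to\infty\triangleright\varepsilon\to0)$; the estimate is even uniform in $\delta$. There is no real obstacle. The only care needed is in the iterated summation: \eqref{eq:col est 1} gives the bound in the form $\prod 1/(a_{l'}-a_{l'-1})$, so the harmonic-sum bound must be applied variable by variable in the correct order, and the special gap must be controlled using only the lower bound $a_{l-1}\ge N^\delta$ and the upper bound $a_{l-1}\le N^\kappa$.
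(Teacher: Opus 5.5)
Your proposal is correct and follows essentially the same route as the paper: bound each summand on the difference set using \eqref{eq:bound f,g,h} and \eqref{eq:col est 1}, isolate the offending gap $a_l-a_{l-1}\in(\lfloor a_{l-1}^{1-\varepsilon}\rfloor,\lfloor a_{l-1}^{1+\varepsilon}\rfloor]$, whose harmonic sum contributes only $O(\varepsilon\log N+1)$, and bound each of the remaining $L-1$ harmonic sums by $O(\log N)$. Because you use the strict lower bound $g>\lfloor a^{1-\varepsilon}\rfloor$ instead of the paper's $\lfloor a^{1-\varepsilon}\rfloor-1$ in the denominator, your estimate does not need $a\geq N^\delta$ with $N$ large, so it is uniform in $\delta$; this is a minor tidying of the paper's argument.
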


\begin{proof}
    For $L=1$, 
    we have $\CutTimes_1(\varepsilon)=\CutTimes_1(-\varepsilon)$, 
    so the claim is immediate.  
    Hence, we assume $L\geq2$ below.
    By definition, $\CutTimes_L(\varepsilon) \subseteq \CutTimes_L(-\varepsilon)$, 
    and the difference between the two sets consists of $(a_l)_{l=1}^{L} \in \CutTimes_L(-\varepsilon)$ such that there exists an $i \in \{2,\ldots, L\}$ satisfying $a_i \leq a_{i-1} + \lfloor a_{i-1}^{1+\varepsilon} \rfloor$.
    Thus, if we write, for each $(a_l)_{l = 1}^L \in \CutTimes_L(-\varepsilon)$,
    \begin{equation}
        H_N((a_l)_{l=1}^L) \coloneqq
        \prod_{l=1}^{L} 
            \func_N(S^1_{a_l}, a_l) 
            \mathbf{1}_D(\bm{S}_{a_l})
            h^{(\varepsilon)}_{N,a_l}(\bm{S} \circ \theta_{a_l}),
    \end{equation}
    then 
    \begin{equation}
        \left|
            \sum_{(a_l)_{l=1}^L \in \CutTimes_L(-\varepsilon)}\hspace{-1em}
            E\bigl[ H_N((a_l)_{l=1}^L) \bigr]
            -\hspace{-0.8em}
            \sum_{(a_l)_{l=1}^L \in \CutTimes_L(\varepsilon)}\hspace{-1em}
            E\bigl[ H_N((a_l)_{l=1}^L) \bigr]
        \right|
        \leq   
        \sum_{i=2}^{L}
        \sum_{\substack{(a_l)_{l=1}^{L} \in \CutTimes_L(-\varepsilon)\\
        a_i \leq a_{i-1} + \lfloor a_{i-1}^{1+\varepsilon} \rfloor}}\!
        E\bigl[ |H_N((a_l)_{l=1}^L)| \bigr].
    \end{equation}
    Hence, using \eqref{eq:bound f,g,h} and a trivial bound $e^{-x} \leq 1$ for any $x \geq 0$,
    the right-hand side of the above inequality is bounded by 
    \begin{align}
        &
        \sum_{i=2}^{L}
        \sum_{\substack{(a_l)_{l=1}^{L} \in \CutTimes_L(-\varepsilon)\\
        a_i \leq a_{i-1} + \lfloor a_{i-1}^{1+\varepsilon} \rfloor}}\!
        \Bigl(\frac{\|f\|_\infty}{\log N}\Bigr)^{L}
        E\!
        \left[
            \prod_{l=1}^{L}
            \mathbf{1}_D(\bm{S}_{a_l})
        \right]
        \leq
        \sum_{i=2}^{L}
        \sum_{\substack{(a_l)_{l=1}^{L} \in \CutTimes_L(-\varepsilon)\\
        a_i \leq a_{i-1} + \lfloor a_{i-1}^{1+\varepsilon} \rfloor}}\!
        \Bigl(\frac{c_1\|f\|_\infty}{\log N}\Bigr)^L
        \prod_{l=1}^{L}
        \frac{1}{a_l-a_{l-1}},
        \label{eq:bound on atypically small gaps}
    \end{align}
    where we used Lemma~\ref{lem:col est} to obtain the inequality and set $a_0=0$. 
    We now bound the second sum on the right-hand side of the above inequality.
    Fix $i \in \{2,\ldots,L\}$.
    For each fixed natural number $a_{i-1} \in [N^\delta, N^\kappa]$,
    we have
    \begin{equation}
        \frac{1}{\log N}
        \sum_{a_i = a_{i-1} + \lfloor a_{i-1}^{1-\varepsilon} \rfloor}^{a_{i-1} + \lfloor a_{i-1}^{1+\varepsilon} \rfloor}
        \frac{1}{a_i - a_{i-1}}
        \leq
        \frac{1}{\log N}
        \log\!\left(\frac{\lfloor a_{i-1}^{1+\varepsilon} \rfloor}{\lfloor a_{i-1}^{1-\varepsilon} \rfloor-1}\right)
        =
        o_{f,\delta}(1;N \to \infty\, \triangleright\, \varepsilon \to 0).
    \end{equation}
    Hence,
    for each $i \in \{2,\ldots,L\}$,
    we deduce that 
    \begin{align}
        \frac{1}{(\log N)^L}
        \sum_{\substack{(a_l)_{l=1}^{L} \in \CutTimes_L(-\varepsilon)\\
        a_i \leq a_{i-1} + \lfloor a_{i-1}^{1+\varepsilon} \rfloor}}
        \prod_{l=1}^{L}
        \frac{1}{a_l-a_{l-1}}
        &\leq  
        \frac{o_{f,\delta}(1;N \to \infty\, \triangleright\, \varepsilon \to 0)}{(\log N)^{L-1}}
        \left( \sum_{m = 2}^{\lfloor N^\kappa \rfloor} \frac{1}{m}  \right)^{L-1}\\
        &=
        o_{f,\delta,L}(1;N \to \infty\, \triangleright\, \varepsilon \to 0).
    \end{align}
    This, combined with~\eqref{eq:bound on atypically small gaps}, yields the desired result.
\end{proof}

We finally show that the cluster decomposition may be restricted, up to a negligible error,
to configurations in which the collision at each cluster-starting time $a_l$
occurs at a distance of order $\sqrt{a_l}$ from the origin.
To formulate this restriction, we introduce the following notation.
For each $R > 1$, define
\begin{equation}
    D_R
    \coloneqq 
    \{(x,x) \in \RN^2 \times \RN^2 \mid R^{-1} \leq |x| \leq R\}.
\end{equation}
For each $n \geq 1$,
we write 
\begin{equation}
    \bm{S}^{\dagger}_n \coloneqq n^{-1/2}(S^1_n, S^2_n).
\end{equation}
Note that 
\begin{equation}
    \bm{S}^{\dagger}_n \in D_R
    \quad \Longleftrightarrow \quad   
    S^1_n = S^2_n\quad \text{and}\quad |S^1_n| \in [R^{-1}\sqrt{n}, R\sqrt{n}].
\end{equation}

\begin{prop} [Annulus restriction for clusters]
    \label{prop:annulus}
    For each $L \geq 1$, $\delta \in (0,1)$, and $\varepsilon \in (0,1/2)$,
    \begin{align}
        &\sum_{(a_l)_{l=1}^L \in \CutTimes_L(\varepsilon)}
        E\!
        \left[
            \prod_{l=1}^{L} 
            \func_N(S^1_{a_l}, a_l) 
            \mathbf{1}_D(\bm{S}_{a_l})
            h_{N,a_l}^{(\varepsilon)}(\bm{S} \circ \theta_{a_l})
        \right]
        \\
        &=
        \sum_{(a_l)_{l=1}^L \in \CutTimes_L(\varepsilon)}
        E\!
        \left[  \prod_{l=1}^{L} 
            \func_N(S^1_{a_l}, a_l) 
            \mathbf{1}_{D_R}(\bm{S}^\dagger_{a_l})
            h_{N,a_l}^{(\varepsilon)}(\bm{S} \circ \theta_{a_l})
        \right]
        +
        o_{f,L}(1; N \to \infty\, \triangleright\, R \to \infty).
    \end{align}
\end{prop}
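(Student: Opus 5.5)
The approach is a telescoping/union bound over the cluster index at which the annulus condition fails. Write $\mathbf{1}_D(\bm{S}_{a_l}) = \mathbf{1}_{D_R}(\bm{S}^\dagger_{a_l}) + \mathbf{1}_{D\setminus}(a_l)$, where $\mathbf{1}_{D\setminus}(a_l)$ is the indicator of $\{S^1_{a_l}=S^2_{a_l},\ |S^1_{a_l}|\notin[R^{-1}\sqrt{a_l},R\sqrt{a_l}]\}$.

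Expanding the product and using \eqref{eq:bound f,g,h} ($|\func_N|\le \|f\|_\infty/\log N$, $|h|\le1$), the difference between the two sides is bounded by
\begin{equation}
\sum_{i=1}^L \Bigl(\frac{\|f\|_\infty}{\log N}\Bigr)^L \sum_{(a_l)\in\CutTimes_L(\varepsilon)} E\Bigl[\prod_{l\ne i}\mathbf{1}_D(\bm{S}_{a_l})\,\mathbf{1}_D(\bm{S}_{a_i})\mathbf{1}_{\{|S^1_{a_i}|\notin[R^{-1}\sqrt{a_i},R\sqrt{a_i}]\}}\Bigr].
\end{equation}
So it suffices, for fixed $i$, to show this is $o_{f,L}(1;N\to\infty\triangleright R\to\infty)$.

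For fixed $i$, I would apply the Markov property at times $a_1<\dots<a_L$. For the factors $l\ne i$ I use the uniform bound $\sup_{x,y}P_{(x,y)}(S^1_n=S^2_n)\lesssim 1/n$ from Lemma~\ref{lem:col est}. The key step is a uniform-in-starting-point bound for the bad event at step $i$. Given $\bm{S}_{a_{i-1}}=(x_1,x_2)$ with $s=a_i-a_{i-1}$, the probability of a collision at a point $z$ with $|z|\notin[R^{-1}\sqrt{a_i},R\sqrt{a_i}]$ is $\sum_z P(S_s=z-x_1)P(S_s=z-x_2)$.

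The two pieces of the bad set are handled differently.

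\emph{Inner disc $|z|<R^{-1}\sqrt{a_i}$.} Bound one factor by $\lesssim 1/s$ via \eqref{eq:LCLT 2}, so the sum is $\lesssim \#\{z\}\cdot s^{-2}\lesssim R^{-2}a_i/s^2$. For $i\ge2$ in $\CutTimes_L(\varepsilon)$ we have $s\ge a_{i-1}^{1+\varepsilon}$, hence $a_i\le s+a_{i-1}\le 2s$; for $i=1$, $s=a_1=a_i$. In both cases the bound is $\lesssim R^{-2}/s$.

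\emph{Outer region $|z|>R\sqrt{a_i}$.} Here I cannot use the starting point freely, since $x_1$ may be far out. Instead I bound by $P(|S^1_{a_i}|>R\sqrt{a_i},\ S^1_{a_i}=S^2_{a_i}\mid\cdot)$ and bring in the whole past. Concretely, I would split the event $|S^1_{a_i}|>R\sqrt{a_i}$ into two cases. Either $|S^1_{a_{i-1}}|>\tfrac R2\sqrt{a_i}$, which is an earlier large deviation, or the increment over $[a_{i-1},a_i]$ exceeds $\tfrac R2\sqrt{a_i}\ge\tfrac R2\sqrt{s}$. In the latter case $\sum_{|w|>R\sqrt s/2}P(S_s=w)\,\sup_u P(S_s=u)\lesssim o(1;R\to\infty)/s$, using the CLT tail together with \eqref{eq:LCLT 2}. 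The former case, on the collision event at $a_{i-1}$ (if $i\ge2$), is controlled by iterating the same argument backward: displacement over each gap $a_l-a_{l-1}$ is at most of order $R\sqrt{a_l-a_{l-1}}$ except on events of conditional probability $o(1;R\to\infty)$ that still carry the $1/(a_l-a_{l-1})$ collision factor. Since $\sum_l (a_l-a_{l-1})\le a_i$, the total displacement stays $\lesssim LR\sqrt{a_i}$, so the argument is run with $R/L$ in place of $R$. Each such event contributes a bound of the form $o_L(1;R\to\infty)\prod_l (a_l-a_{l-1})^{-1}$.

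Handling this outer region is the main obstacle, because conditioning only on $\bm{S}_{a_{i-1}}$ is too weak. The obstacle is resolved by replacing the single-increment estimate with the Chebyshev bound $P(|S_s|>r)\le E|S_1|^2 s/r^2$ applied to each gap, combined with \eqref{eq:LCLT 2} for the collision probability at the end of that gap. This requires an estimate of the form $\sup_{x,y}P_{(x,y)}\bigl(S^1_s=S^2_s,\ |S^1_s-x|>r\bigr)\lesssim (s/r^2)^{1/2}\cdot s^{-1}$ or similar. To prove it, I would split at time $s/2$: one walk must move far during one of the two halves while the other half supplies the $1/s$ collision factor through \eqref{eq:LCLT 2}.

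With all factors bounded by $c_1/(a_l-a_{l-1})$ and one factor carrying an extra $o_L(1;R\to\infty)$, summing over $\CutTimes_L(\varepsilon)$ and using \eqref{eq:col est 2}-type estimates gives
\[
\bigl(c\kappa\|f\|_\infty\bigr)^L\, o_L(1;R\to\infty),
\]
uniformly in $N$. This proves the claim (it is even uniform in $N$).
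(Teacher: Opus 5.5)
Your argument is correct, but it takes a different route from the paper's.

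\textbf{Common start.} Both proofs begin with the same telescoping reduction, via Lemma~\ref{lem:prod diff} and \eqref{eq:bound f,g,h}, to a single bad index.

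\textbf{The paper's route.} It uses the Markov property at $a_k$ to drop the later clusters and then invokes Lemma~\ref{lem:annu neg}. The key trick there is $p_n(x)^2\lesssim n^{-1}p_n(x)$ followed by Chapman--Kolmogorov. This collapses the chain of collisions into $\prod_l (t_l-t_{l-1})^{-1}$ times the probability that a single walk started at the origin has $S_{t_L}$ outside the annulus. So the ``far-out starting point'' issue you identify never arises. The inner disc costs $1/t_L+1/R^2$ by a lattice-point count, and the outer region costs $1/R^2$ by Chebyshev. This works for arbitrary ordered tuples with no separation, which is why the same lemma is reused with $L=1$ in Lemma~\ref{lem:remove annu}.

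\textbf{Your route.} You keep uniform-in-starting-point, gap-by-gap bounds throughout.
\begin{itemize}
\item For the inner disc you use the separation $a_i\le 2(a_i-a_{i-1})$ built into $\CutTimes_L(\varepsilon)$.
\item For the outer region, in its cleanest form, you take a union bound over the gap $j\le i$ on which $|S^1_{a_j}-S^1_{a_{j-1}}|>(R/L)\sqrt{a_i}$. Together with the collision at $a_j$, this event has conditional probability $\lesssim L^2/(R^2(a_j-a_{j-1}))$. This holds because, by independence of $S^1$ and $S^2$, $\sup_{x,y}P_{(x,y)}(S^1_s=S^2_s,\,|S^1_s-x|>r)\le\sup_uP(S_s=u)\,P(|S_s|>r)\lesssim r^{-2}$.
\end{itemize}
This is valid and elementary. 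However, it relies on the cluster separation and carries $L$-dependent constants, whereas the paper's collapse is cleaner and applies to unseparated tuples.

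\textbf{Two small remarks.}
\begin{itemize}
\item The split at time $s/2$ in your last paragraph is unnecessary. The independence bound displayed above already gives the required single-gap estimate directly.
\item The lattice count for the inner disc is $\lesssim 1+a_i/R^2$, not $a_i/R^2$. The extra $s^{-2}$ term contributes $O(1/\log N)$ after summation. Your final bound is therefore $O_{f,L}(1/\log N+1/R^2)$, as in the paper, rather than uniform in $N$. This is harmless for the iterated limit.
\end{itemize}
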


To prove the above proposition,
we prepare two lemmas below.

\begin{lem} \label{lem:prod diff}
    Fix $L \in \NN$.
    For real numbers $a_1,\ldots,a_L,b_1,\ldots,b_L$, 
    \begin{equation}
        \prod_{i=1}^L a_i-\prod_{i=1}^L b_i 
        = 
        \sum_{\ell=1}^L  (a_\ell-b_\ell) \Big(\prod_{i=1}^{\ell-1} a_i\Big)
        \Big( \prod_{i=\ell+1}^{L } b_i\Big),
    \end{equation}
    where the product over the empty set is defined to be $1$.
\end{lem}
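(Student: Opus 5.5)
This is the standard telescoping identity, and I would prove it by induction on $L$. For $L=1$ both sides equal $a_1-b_1$, since both empty products are $1$.

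For the inductive step, assume the identity for $L-1$ and split
\begin{equation}
\prod_{i=1}^L a_i-\prod_{i=1}^L b_i
=
\Big(\prod_{i=1}^{L-1} a_i-\prod_{i=1}^{L-1} b_i\Big)b_L
+(a_L-b_L)\prod_{i=1}^{L-1}a_i .
\end{equation}
Apply the induction hypothesis to the bracket. Multiplying each term $(a_\ell-b_\ell)\bigl(\prod_{i<\ell}a_i\bigr)\bigl(\prod_{i=\ell+1}^{L-1}b_i\bigr)$ by $b_L$ extends its last product up to $i=L$. This yields the summands $\ell=1,\dots,L-1$ of the claimed formula. The remaining term $(a_L-b_L)\prod_{i<L}a_i$ is exactly the summand $\ell=L$, whose last product is empty.

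An equivalent direct route is to set $P_\ell\coloneqq\bigl(\prod_{i\le \ell}a_i\bigr)\bigl(\prod_{i>\ell}b_i\bigr)$ for $0\le\ell\le L$. Then $P_L-P_0$ is the left-hand side, it telescopes as $\sum_{\ell=1}^{L}(P_\ell-P_{\ell-1})$, and each increment factors as $(a_\ell-b_\ell)\bigl(\prod_{i<\ell}a_i\bigr)\bigl(\prod_{i>\ell}b_i\bigr)$.

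There is no real obstacle here. The only care needed is the bookkeeping of the empty-product conventions at $\ell=1$ and $\ell=L$.
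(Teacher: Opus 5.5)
Your proof is correct: both the induction (with the split $\bigl(\prod_{i<L}a_i-\prod_{i<L}b_i\bigr)b_L+(a_L-b_L)\prod_{i<L}a_i$) and the telescoping over the interpolating products $P_\ell$ establish the identity, with the empty-product conventions handled properly. The paper omits the proof as elementary, and your telescoping argument is exactly the standard verification it leaves to the reader.
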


\begin{proof}
    This is elementary and thus we omit the proof.
\end{proof}

\begin{lem} \label{lem:annu neg}
    For each $\kappa>0$, $L\in\NN$, $N\geq2$, and $R>1$,
    \begin{align}
        &\frac{1}{(\log N)^L}
        \sum_{1\leq t_1<\cdots<t_L\leq\lfloor N^\kappa\rfloor}\hspace{-1em}
        P\!\left(
            \bm{S}_{t_l}\in D \text{ for all } l\in \{1,\ldots,L\},
            \text{ and }
            \bm{S}^\dagger_{t_L}\notin D_R
        \right)
        \\
        &\lesssim_{\kappa,L}
        \frac{1}{\log N}+\frac{1}{R^2}.
    \end{align}
\end{lem}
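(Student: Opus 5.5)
We condition on the first $L-1$ collisions and estimate only the last one, at time $t_L$. By the Markov property at time $t_{L-1}$ (with $t_0\coloneqq 0$), the summand factorizes. The first factor is $P(\bm{S}_{t_l}\in D,\ l\le L-1)$, which Lemma~\ref{lem:col est} bounds by $c_1^{L-1}\prod_{l=1}^{L-1}(t_l-t_{l-1})^{-1}$. The second factor is a supremum over starting points $(x,x)\in D$ of
\begin{equation}
P_{(x,x)}\bigl(S^1_{s}=S^2_{s},\ |S^1_s|\notin[R^{-1}\sqrt{t_L},R\sqrt{t_L}]\bigr),\qquad s\coloneqq t_L-t_{L-1}.
\end{equation}
Here $x$ is the collision site at time $t_{L-1}$, so we cannot use a supremum over $x$ blindly. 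The start point matters, and we keep track of it.

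\textbf{Inner region.} The event $|S^1_{t_L}|<R^{-1}\sqrt{t_L}$ is handled uniformly in $x$. Since $P(S_s=z)\lesssim 1/s$, we get
\begin{equation}
\sum_{|z|<R^{-1}\sqrt{t_L}}P_x(S_s=z)^2\lesssim \frac{1}{s^2}\cdot\frac{t_L}{R^2}\wedge\frac1s .
\end{equation}
When $s\ge t_L/2$ this is $\lesssim 1/(sR^2)$. When $s<t_L/2$ we instead use the bound $\frac1s\wedge \frac{t_L}{s^2R^2}$ and sum over $s$: the sum of $1/s$ over $s<t_L/R^2$ is $\log(t_L/R^2)$, which is not small. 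So we must restrict to $s\gtrsim t_L$. To do this, we sum over $t_L$ first with $t_{L-1}$ fixed. For $s\le t_{L-1}$ we have $t_L\le 2t_{L-1}$, and the summed contribution over such $s$ is at most $\sum_{s}1/s\lesssim\log N$. That is too crude, so we sum in the other order. For fixed $t_L$, the inner probability with $s$ arbitrary is $\lesssim \min(1/s,\,t_L/(s^2R^2))$. Summing over $s\ge1$ gives $\lesssim 1+\log(1\vee t_L/R^2)$, which is still order $\log N$. The extra decay must therefore come from the earlier factor.

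\textbf{Cleaner route: drop to $L=1$ effectively.} Write $\sum_{t_L}$ over $t_L\in(t_{L-1},N^\kappa]$ and split according to $t_L\ge t_{L-1}^{1+\eta}$ versus $t_L< t_{L-1}^{1+\eta}$, hmm — the precise scale needs care. The simplest route is to split at $t_L\le 2t_{L-1}$ versus $t_L>2t_{L-1}$.

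For $t_L>2t_{L-1}$ we have $s\ge t_L/2$. For the outer region we use $|x|$, which is typically $\lesssim\sqrt{t_{L-1}}$, together with Chebyshev: $P(|S^1_{t_L}|>R\sqrt{t_L},\ \text{collision})$. It is easier to bound the joint event directly, without conditioning. We have
\begin{equation}
P(\cdots,|S^1_{t_L}|>R\sqrt{t_L})\le \sum_{|z|>R\sqrt{t_L}}P(\bm S_{t_l}\in D, l<L,\ \bm S_{t_L}=(z,z)).
\end{equation}
Conditioning at $t_{L-1}$ and using $P_x(S_s=z)\lesssim 1/(s\vee|z-x|^2)$ when $|x|\le R\sqrt{t_L}/2$, we get a contribution $\lesssim\sum_{|w|>R\sqrt{t_L}/2}(s\vee|w|^2)^{-2}\lesssim 1/(sR^2)$. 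The case $|x|>R\sqrt{t_L}/2\ge R\sqrt{t_{L-1}}/2$ is handled inductively: there the previous collision itself lies outside the annulus. Combined with the inner bound $1/(sR^2)$, the $t_L>2t_{L-1}$ part contributes $\lesssim R^{-2}\sum_s 1/s\lesssim R^{-2}\log N$ times the first factor.

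For $t_L\le 2t_{L-1}$, we sum over $t_{L-1}<t_L\le 2t_{L-1}$ with weight $1/s$. This costs $\log t_{L-1}$, so it is still not small. The way out is the following: by the Markov property, the conditional probability given a start at $x$ summed over $t_L\in(t_{L-1},2t_{L-1}]$ is bounded by the expected number of collisions. Near a collision we cannot gain directly, and I would instead treat this via the annulus defect at time $t_{L-1}$ and induction on $L$.

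\textbf{Induction and normalization.} The resulting recursion is
\begin{equation}
\text{error}_L\lesssim \log N\cdot\text{error}_{L-1}+(\log N)^{L}R^{-2}+(\log N)^{L-1}.
\end{equation}
The base case $L=1$ is immediate: $\sum_{t\le N^\kappa}P(\bm S_t\in D,\bm S^\dagger_t\notin D_R)\lesssim \sum_t \frac{1}{t}\bigl(R^{-2}+e^{-cR^2}\bigr)$, using the inner-region estimate and a Gaussian tail from the LCLT. The main obstacle is the regime $t_L\le 2t_{L-1}$ with $|S_{t_{L-1}}|$ already in the annulus. The term $(\log N)^{L-1}$ is meant to absorb it, but only if one shows that summing over $t_L\in(t_{L-1},2t_{L-1}]$ where the walk exits the annulus gives $O(1)$ rather than $O(\log N)$. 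I would try to obtain this from the bound $P_x(S_s=z)^2$ summed over $|z|\notin[\cdot]$ with $|x|\in[R^{-1}\sqrt{t_{L-1}},R\sqrt{t_{L-1}}]$, which requires displacement $\gtrsim\sqrt{t}/R$ and so gives $\lesssim 1/(s\vee t/R^2)$. Summing over $s\le t$ then gives $\log R^2\lesssim_R 1$. This is not uniform in $R$, so one should replace it by an intermediate annulus of radius $2R$ to get the required $O(1)$, up to constants depending on $L,\kappa$.
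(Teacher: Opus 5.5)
Your proposal has a genuine gap in the inductive step, in exactly the regime you flag yourself: $t_{L-1}<t_L\le 2t_{L-1}$ with the collision site $x$ at time $t_{L-1}$ inside the annulus. The only estimate you give there is the displacement bound $\lesssim 1/(s\vee t_L/R^2)$ at $s=t_L-t_{L-1}$. Summed over $s\le t_{L-1}$, this is of order $1+\log R$, not $O(1)$. Fed into your recursion it gives a term $(\log N)^{L-1}\log R$, i.e.\ $\log R/\log N$ after normalization. That is not $\lesssim_{\kappa,L}1/\log N+1/R^2$ uniformly in $R$ (take $R^2=\log N$). Such a weaker bound would still be enough for Proposition~\ref{prop:annulus}, where $N\to\infty$ first, but it is not the stated lemma.

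Moving to an intermediate annulus does not remove this $\log R$. For $x$ near the inner edge of the smaller annulus, the displacement needed is still only of order $\sqrt{t_L}/R$, so any displacement bound uniform in $x$ cannot beat $\log R$. What the intermediate annulus does repair is a second problem. Your claim that exiting requires displacement $\gtrsim\sqrt{t}/R$ is false for $x$ near the inner boundary at time $t_{L-1}$: the inner radius $\sqrt{t_L}/R$ has grown, so no displacement at all may be needed. (Also, Lemma~\ref{lem:LCLT} gives no Gaussian tail under a finite second moment. In the base case use $\sum_{|z|>R\sqrt t}P(S_t=z)^2\lesssim t^{-1}P(|S_t|>R\sqrt t)\lesssim (tR^2)^{-1}$ instead.) As written, the lemma is proved only for $L=1$.

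The idea you are missing is that the earlier collision sites never need to be tracked. With $p_n(y)\coloneqq P(S_n=y)$ and $x_0=0$, the summand equals $\sum_{x_1,\dots,x_L}\prod_{l=1}^L p_{t_l-t_{l-1}}(x_l-x_{l-1})^2$, summed over $|x_L|\notin[\sqrt{t_L}/R,R\sqrt{t_L}]$. Apply $p_n(y)^2\lesssim n^{-1}p_n(y)$ (from \eqref{eq:LCLT 2}) to every factor, then Chapman--Kolmogorov in $x_1,\dots,x_{L-1}$. This bounds the summand by $\lesssim_L\prod_{l=1}^L(t_l-t_{l-1})^{-1}\,P\bigl(|S_{t_L}|\notin[\sqrt{t_L}/R,R\sqrt{t_L}]\bigr)$. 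The annulus defect is now that of a single walk from the origin at time $t_L$. It is $\lesssim 1/t_L+1/R^2\le 1/(t_L-t_{L-1})+1/R^2$, by a lattice-point count inside and Chebyshev outside. Summing over the times gives $(\log N)^{L-1}\bigl(O(1)+R^{-2}\log N\bigr)$, with no case split and no induction; this is the paper's proof.

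If you prefer to keep your conditioning route, the bad term can be controlled by the \emph{area} of the inner target disc rather than by a displacement. For $|x|\ge 2\sqrt{t_{L-1}}/R$ one has $|z-x|\gtrsim|x|$ whenever $|z|<\sqrt{2t_{L-1}}/R$. Hence $\sum_{s\ge1}\sum_{|z|<\sqrt{2t_{L-1}}/R}P(S_s=z-x)^2\lesssim(1+t_{L-1}/R^2)|x|^{-2}\lesssim 1$ uniformly in $R$. The remaining case $\bm{S}^\dagger_{t_{L-1}}\notin D_{R/2}$ goes to the induction hypothesis at parameter $R/2$.
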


\begin{proof}
    Set $t_0=0$ and write $p_n(x)\coloneqq P(S_n=x)$.
    By the independence of $S^1$ and $S^2$ and the Markov property,
    the probability in the summand is equal to
    \begin{equation}
        \sum_{\substack{x_1,\ldots,x_L\in\ZN^2\\
        |x_L|\notin[\sqrt{t_L}/R,R\sqrt{t_L}]}}
        \prod_{l=1}^L
        p_{t_l-t_{l-1}}(x_l-x_{l-1})^2,
    \end{equation}
    where we set $x_0 \coloneqq (0,0)$.
    By Lemma~\ref{lem:LCLT},
    \begin{equation}
        p_n(x)^2
        \lesssim
        \frac{1}{n}p_n(x),
        \qquad n\geq1,\, x\in\ZN^2.
    \end{equation}
    Thus, summing first over $x_1,\ldots,x_{L-1}$ 
    and using the Chapman--Kolmogorov equation,
    we obtain
    \begin{equation}
        P\!\left(
            \bm{S}_{t_l}\in D \text{ for all } l\in \{1,\ldots,L\},
            \text{ and }
            \bm{S}^\dagger_{t_L}\notin D_R
        \right)
        \lesssim_L
        \prod_{l=1}^L\frac{1}{t_l-t_{l-1}}
        \sum_{|x|\notin[\sqrt{t_L}/R,R\sqrt{t_L}]}
        p_{t_L}(x).
    \end{equation}
    For the inner region, Lemma~\ref{lem:LCLT} gives
    \begin{equation}
        \sum_{|x|<\sqrt{t_L}/R} p_{t_L}(x)
        \lesssim
        \frac{1}{t_L} 
        \sum_{|x|<\sqrt{t_L}/R} 1
        \lesssim
        \frac{1}{t_L}+\frac{1}{R^2}.
    \end{equation}
   For the outer region, 
   Chebyshev's inequality with the finite second moment of $S_1$ yields
    \begin{equation}
        \sum_{|x|>R\sqrt{t_L}}p_{t_L}(x)
        = 
        P\bigl( |S_{t_L}| > R\sqrt{t_L} \bigr)
        \lesssim
        \frac{1}{R^2}.
    \end{equation}
    It follows that
    \begin{align}
        &P\!\left(
            \bm{S}_{t_l}\in D \text{ for all } l\in \{1,\ldots,L\},
            \text{ and }
            \bm{S}^\dagger_{t_L}\notin D_R
        \right)
     \\
        &\lesssim_L
        \left(
            \frac{1}{(t_L-t_{L-1})^2}
            +\frac{1}{(t_L-t_{L-1})R^2}
        \right)
        \prod_{l=1}^{L-1}\frac{1}{t_l-t_{l-1}}.
    \end{align}
    Consequently,
    \begin{align}
        &\frac{1}{(\log N)^L}
        \sum_{1\leq t_1<\cdots<t_L\leq\lfloor N^\kappa\rfloor}\hspace{-1em}
        P\!\left(
            \bm{S}_{t_l}\in D \text{ for all } l\in \{1,\ldots,L\},
            \text{ and }
            \bm{S}^\dagger_{t_L}\notin D_R
        \right)
     \\
        &\lesssim_L
        \frac{1}{(\log N)^L}
        \left(\sum_{n=1}^{\lfloor N^\kappa\rfloor}\frac{1}{n}\right)^{L-1}
        \sum_{n=1}^{\lfloor N^\kappa\rfloor}
        \left(\frac{1}{n^2}+\frac{1}{nR^2}\right)\\
        &\lesssim_{\kappa,L}
        \frac{1}{\log N}+\frac{1}{R^2},
    \end{align}
    which completes the proof.
\end{proof}

We now complete the proof of Proposition~\ref{prop:annulus}.

\begin{proof} [Proof of Proposition~\ref{prop:annulus}]
    By \eqref{eq:bound f,g,h} and Lemma~\ref{lem:prod diff},
    it suffices to show that for any $k\in \{1,\ldots,L\}$, 
    \begin{align}
        &(\log N)^{-L}\hspace{-1em}
        \sum_{(a_l)_{l=1}^L \in \CutTimes_L(\varepsilon)}
        E\!
        \left[
            \mathbf{1}_D(\bm{S}_{a_k})
            \mathbf{1}_{D_R^c}(\bm{S}^{\dagger}_{a_k})        
            \prod_{l=1}^{k-1} \mathbf{1}_D(\bm{S}_{a_l})            
            \prod_{l=k+1}^{L} \mathbf{1}_D(\bm{S}_{a_l})
        \right]\\
        &=    
        o_{f,L}(1; N \to \infty\, \triangleright\, R \to \infty).
    \end{align}
    By the Markov property at time $a_k$ and Lemmas~\ref{lem:col est} and~\ref{lem:annu neg},
    the left-hand side of the preceding display is bounded by
    \begin{align}
        &(\log N)^{-L} \hspace{-1em}
        \sum_{(a_l)_{l=1}^L \in \CutTimes_L(\varepsilon)}\hspace{-0.5em}
        E\!
        \left[
            \mathbf{1}_D(\bm{S}_{a_k})
            \mathbf{1}_{D_R^c}(\bm{S}^{\dagger}_{a_k})        
            \prod_{l=1}^{k-1} \mathbf{1}_D(\bm{S}_{a_l})            
        \right]
        \sup_{x \in \ZN^2} 
        E_{(x,x)}\!\left[
            \prod_{l=k+1}^{L} \mathbf{1}_D(\bm{S}_{a_l-a_k})
        \right]\\
        &\lesssim_{f,L}
        (\log N)^{-k} \hspace{-1em}
        \sum_{(a_l)_{l=1}^k \in \CutTimes_k(\varepsilon)}\hspace{-1em}
        P\!\left(
            \bm{S}_{a_l}\in D \text{ for all } l \in \{1,\ldots,k\},
            \text{ and } 
            \bm{S}^\dagger_{a_k}\notin D_R
        \right)\\
        &= 
        o_{f,L}(1; N \to \infty\, \triangleright\, R \to \infty),
    \end{align}
    which completes the proof.
\end{proof}

Combining Propositions~\ref{prop:many clust}--\ref{prop:annulus} yields the following corollary. 

\begin{cor} \label{cor:neg col}
    For each $\delta \in (0,1)$,
    \begin{align}
        &\sum_{L=1}^{\infty}
        \sum_{(a_l)_{l=1}^L \in \CutTimes_L(-\varepsilon)}\!
        E\!
        \left[
            \prod_{l=1}^{L} 
            \func_N(S^1_{a_l}, a_l) 
            \mathbf{1}_D(\bm{S}_{a_l})
            h^{(\varepsilon)}_{N,a_l}(\bm{S} \circ \theta_{a_l})
        \right]\\
        &= 
        \sum_{L=1}^M
        \sum_{(a_l)_{l=1}^L \in \CutTimes_L(\varepsilon)}
        E\!
        \left[  \prod_{l=1}^{L} 
            \func_N(S^1_{a_l}, a_l) 
            \mathbf{1}_{D_R}(\bm{S}^\dagger_{a_l})
            h_{N,a_l}^{(\varepsilon)}(\bm{S} \circ \theta_{a_l})
        \right]\\
        &\qquad
        +
        o_{f,\delta}(1; N \to \infty\, \triangleright\, R \to \infty\, \triangleright\, \varepsilon \to 0\, \triangleright\, M \to \infty).
    \end{align}
\end{cor}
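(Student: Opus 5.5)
The argument is a chaining of the three preceding propositions, with attention to the order of limits and to which error depends on which parameter. Fix $\delta$ and write $T_L(\mathcal{B},\mathbf{1}_\ast)$ for the inner $L$-th term of the cluster expansion, where the cut-time set is $\mathcal{B}\in\{\CutTimes_L(-\varepsilon),\CutTimes_L(\varepsilon)\}$ and the indicator $\mathbf{1}_\ast$ is either $\mathbf{1}_D(\bm{S}_{a_l})$ or $\mathbf{1}_{D_R}(\bm{S}^\dagger_{a_l})$.

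First, Proposition~\ref{prop:many clust} replaces $\sum_{L=1}^\infty$ by $\sum_{L=1}^M$. Its error is bounded by $C_f(c_2\kappa\|f\|_\infty)^{M+1}$, uniformly in $N$, $\delta$, $\varepsilon$ and $R$ (it does not involve $R$ at all). It is therefore an $o_f(1;M\to\infty)$ quantity, and it survives any inner limits taken in $N$, $R$, $\varepsilon$.

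For each fixed $L\le M$, Proposition~\ref{prop:small gaps} replaces $\CutTimes_L(-\varepsilon)$ by $\CutTimes_L(\varepsilon)$ with error $o_{f,\delta,L}(1;N\to\infty\triangleright\varepsilon\to0)$. Proposition~\ref{prop:annulus} then replaces $\mathbf{1}_D(\bm{S}_{a_l})$ by $\mathbf{1}_{D_R}(\bm{S}^\dagger_{a_l})$ with error $o_{f,L}(1;N\to\infty\triangleright R\to\infty)$, uniformly in $\varepsilon$ and $\delta$. This uniformity needs checking: the bound in its proof came from Lemma~\ref{lem:annu neg} and Lemma~\ref{lem:col est}, with constants depending only on $\kappa$, $L$, $f$, and the sum over $\CutTimes_L(\varepsilon)$ was dominated by the sum over all increasing tuples in $[1,N^\kappa]$. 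Summing these finitely many errors over $L\le M$ gives an error depending on $M$. Such an error is still $o_{f,\delta,M}(1;N\to\infty\triangleright R\to\infty\triangleright\varepsilon\to0)$, because a finite sum of quantities each vanishing under the iterated limit also vanishes under it.

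Putting the pieces together, the total error is
\begin{equation}
    r^{(1)}_{M} + r^{(2)}_{N,R,\varepsilon,M},
\end{equation}
where $|r^{(1)}_M|\le C_f(c_2\kappa\|f\|_\infty)^{M+1}$ and $r^{(2)}$ satisfies
\begin{equation}
    \limsup_{\varepsilon\to0}\limsup_{R\to\infty}\limsup_{N\to\infty}|r^{(2)}|=0
\end{equation}
for every fixed $M$.

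One point about the order of limits needs care. The $\varepsilon$-error from Proposition~\ref{prop:small gaps} appears only after $N\to\infty$, and it does not depend on $R$. A term of the form $o(1;N\to\infty\triangleright\varepsilon\to0)$ that is independent of $R$ is also $o(1;N\to\infty\triangleright R\to\infty\triangleright\varepsilon\to0)$, since taking $\limsup_R$ of an $R$-independent quantity changes nothing. With this observation, taking $\limsup_N$, then $\limsup_R$, then $\limsup_\varepsilon$, then $\limsup_M$ of the total error gives $0$, which is the claimed statement.

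The only potential obstacle is bookkeeping: confirming that each proposition's error is uniform in the parameters that are limited later (for instance, that the annulus error does not depend on $\varepsilon$ or $M$ in a bad way). The proofs above show this uniformity directly.
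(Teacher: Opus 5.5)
Your proposal is correct and is exactly the argument the paper intends: the paper states the corollary as an immediate combination of Propositions~\ref{prop:many clust}, \ref{prop:small gaps} and~\ref{prop:annulus}. Your bookkeeping is the omitted routine verification, namely the tail bound $(c_2\kappa\|f\|_\infty)^{M+1}$, uniform in $N,R,\varepsilon$, together with finitely many $L\le M$ errors that vanish under $N\to\infty\triangleright R\to\infty\triangleright\varepsilon\to0$. The uniformity in $\varepsilon$ of the annulus error that you check does hold, but it is not needed: for each fixed $\varepsilon$, the iterated limit in $N$ and then $R$ already vanishes.
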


\subsection{Asymptotic independence of well-separated clusters}
\label{sec:asymp indep}

The main result of this subsection is Proposition~\ref{prop:indep decomp} below,
which shows that 
the contributions from well-separated clusters are asymptotically independent.

\begin{prop} [{Asymptotic independence of clusters}] 
\label{prop:indep decomp}
    For each $L \geq 1$, $\delta \in (0,1)$, and $\varepsilon \in (0,1/2)$,
    \begin{align}
        &\sum_{(a_l)_{l=1}^L \in \CutTimes_L(\varepsilon)}
        E\!
        \left[
            \prod_{l=1}^{L} 
            \func_N(S^1_{a_l}, a_l) 
            \mathbf{1}_{D_R}(\bm{S}^\dagger_{a_l})
            h_{N,a_l}^{(\varepsilon)}(\bm{S} \circ \theta_{a_l})
        \right]
        \\
        &=
        \sum_{(a_l)_{l=1}^L \in \CutTimes_L(\varepsilon)}
        \prod_{l=1}^{L} 
        E\!
        \left[
            \func_N(S^1_{a_l}, a_l) 
            \mathbf{1}_{D_R}(\bm{S}^\dagger_{a_l})
            h_{N,a_l}^{(\varepsilon)}(\bm{S} \circ \theta_{a_l})
        \right]\\
        &\qquad
        +
        o_{f,L,\delta,\varepsilon}(1; N \to \infty\, \triangleright\, R \to \infty).
    \end{align}
\end{prop}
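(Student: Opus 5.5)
Fix $L$, $\delta$, $\varepsilon$, and write, for $(a_l)_{l=1}^L \in \CutTimes_L(\varepsilon)$,
\begin{equation}
    G_l \coloneqq \func_N(S^1_{a_l}, a_l)\,\mathbf{1}_{D_R}(\bm{S}^\dagger_{a_l})\, h^{(\varepsilon)}_{N,a_l}(\bm{S}\circ\theta_{a_l}).
\end{equation}
The plan is to peel off the clusters one at a time, from the last to the first. We show by induction on $L$ that
\begin{equation}
    E\Bigl[\prod_{l=1}^L G_l\Bigr] - E\Bigl[\prod_{l=1}^{L-1} G_l\Bigr]E[G_L]
\end{equation}
is, after summation over $\CutTimes_L(\varepsilon)$, of order $o(1)$ as $N\to\infty$ then $R\to\infty$. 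Iterating (the sum over $\CutTimes_L(\varepsilon)$ factorizes only after bounding, which is fine since all errors are absolute values) then gives the claim. The key structural point is that $\prod_{l<L}G_l$ is measurable with respect to $\bm{S}$ up to time $b \coloneqq a_{L-1}+\lfloor a_{L-1}^{1-\varepsilon}\rfloor$. Moreover, $a_L - b \ge a_{L-1}^{1+\varepsilon}-a_{L-1}^{1-\varepsilon}$, so $b \ll a_L$; quantitatively, $b/a_L \lesssim a_{L-1}^{-\varepsilon} \le N^{-\delta\varepsilon}$.

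Conditioning on $\mathcal{F}_b$ and applying the Markov property, we have $E[G_L\mid\mathcal{F}_b]=F(\bm{S}_b)$ with $F(\bm{x}) = E_{\bm{x}}[\tilde f(\bm{S}_{a_L-b}) \ldots]$. Rather than conditioning at $b$, it is cleaner to condition at time $b$ and further on $\bm{S}_{a_L}$. Define $\Phi(\bm{y}) \coloneqq \func_N(y^1,a_L)\mathbf{1}_{D_R}(\bm{y}/\sqrt{a_L})\,E_{\bm{y}}[h^{(\varepsilon)}_{N,a_L}(\bm{S})]$. This is a function supported on $D$ with $\|\Phi\|_\infty\le \|f\|_\infty/\log N$ and supported in $|\bm{y}|\le \sqrt2 R\sqrt{a_L}$. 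Then $E[G_L\mid\mathcal{F}_b]=E[\Phi(\bm{S}_{a_L})\mid \bm{S}_b]$. Proposition~\ref{prop:asymp indep}, with $N\to b$, $M\to a_L$, $r_0=\sqrt2R$, gives on $\{|\bm{S}_b|\le r_0\sqrt b\}$
\begin{equation}
    \bigl|E[\Phi(\bm{S}_{a_L})\mid\bm{S}_b]-E[\Phi(\bm{S}_{a_L})]\bigr| \lesssim_R \frac{1}{a_L\log N}\Bigl(\sqrt{b/a_L}+\rho_b\Bigr)\lesssim_R \frac{N^{-\delta\varepsilon/2}+\rho_{N^\delta}}{a_L\log N}.
\end{equation}
Multiplying by $|\prod_{l<L}G_l|$ and summing, the $a_L$-sum produces $\sum_{a_L\le N^\kappa} a_L^{-1}\lesssim \kappa\log N$, which cancels one $\log N$. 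The remaining sum over $(a_l)_{l<L}$ of $E|\prod_{l<L}G_l|$ is $O_{f,L}(1)$ by Lemma~\ref{lem:col est}. Hence this error is $O_{R,f,L}(N^{-\delta\varepsilon/2}+\rho_{N^\delta})=o(1;N\to\infty)$ for each fixed $R$.

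The main obstacle is the event $\{|\bm{S}_b|> r_0\sqrt b\}$, where Proposition~\ref{prop:asymp indep} is unavailable. Note $\bm{S}_b$ is not at a collision, so the annulus restriction at $a_{L-1}$ does not directly control it. My first attempt is to enlarge $r_0$: take $r_0=R'$ as a separate parameter and bound the bad event crudely. On $\{|\bm{S}_b|>R'\sqrt b\}$ we use $|G_L|\le \|f\|_\infty(\log N)^{-1}\mathbf{1}_D(\bm{S}_{a_L})$ and, by Markov at $b$ and the LCLT bound, $P(\bm{S}_{a_L}\in D\mid\bm{S}_b)\lesssim 1/(a_L-b)\lesssim 1/a_L$. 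This reduces the term to $(\log N)^{-L}\sum \prod_{l<L}(\ldots)\,P(|\bm{S}_b|>R'\sqrt b,\ \text{collisions at }a_1,\dots,a_{L-1})/a_L$. Given the collision at $a_{L-1}$ with $|S^1_{a_{L-1}}|\le R\sqrt{a_{L-1}}$, the displacement over the short window of length $a_{L-1}^{1-\varepsilon}$ is $o(\sqrt{a_{L-1}})$ in probability by Chebyshev. The conditional probability of $|\bm{S}_b|>R'\sqrt b$ is therefore $\lesssim R'^{-2}+o(1)$ once $R'\ge 2\sqrt2R$, and summing as in Lemma~\ref{lem:annu neg} gives a contribution that vanishes. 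The same bound applies to the companion term $E[\prod_{l<L}G_l\,\mathbf{1}\{\text{bad}\}]E[G_L]$.

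Finally, since $D_R$ already forces $|\bm{S}_{a_{L-1}}|\le\sqrt2R\sqrt{a_{L-1}}$, choosing $R'=4R$ makes the bad-event term vanish entirely in the limit $N\to\infty$. This yields the stated order of limits $N\to\infty\,\triangleright\,R\to\infty$.
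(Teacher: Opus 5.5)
Your proposal is correct and follows essentially the paper's route: Markov property at $b_{k-1}$, Proposition~\ref{prop:asymp indep} with $r_0=4R$, summation of the resulting $a_k^{-1}\Psi(b_{k-1},a_k)/\log N$ error via Lemma~\ref{lem:col est}, and a Chebyshev bound on $\{|\bm{S}_{b_{k-1}}|>4R\sqrt{b_{k-1}}\}$. This step is iterated over $k=L,\dots,2$ (really a descending iteration in $k$ at fixed $L$ rather than an induction on $L$), with the already-factorized clusters bounded in absolute value. The only difference is bookkeeping: the paper inserts the indicators $\mathbf{1}_{\{|\bm{S}_{b_l}|\leq 4R\sqrt{b_l}\}}$ into every factor beforehand (Lemma~\ref{lem:R rstrct1}) and removes them at the end, whereas you treat the bad event inline at each peeling step. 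Your observation that the Chebyshev bound is actually $\lesssim a_{k-1}^{-\varepsilon}/R^2$ shows this cost already vanishes as $N\to\infty$ for fixed $R$.
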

 
Henceforth, given $(a_l)_{l=1}^L \in \CutTimes_L(\varepsilon)$, 
we write 
\begin{equation}
    b_{l} = b_{l}(\varepsilon) \coloneqq a_l + \lfloor a_l^{1-\varepsilon} \rfloor,
\end{equation}
which marks the end of the time window defining the $l$-th cluster.
To apply Proposition~\ref{prop:asymp indep}, 
we first restrict the position of the walk
at each endpoint $b_l$ to a ball of diffusive scale.
The following lemma shows that this additional restriction has negligible cost as $R\to\infty$.

\begin{lem}\label{lem:R rstrct1}
    For each $N \geq 2$, $L \geq 1$, $\delta \in (0,1)$, $\varepsilon \in (0,1/2)$,
    \begin{align}
        &\sum_{(a_l)_{l=1}^L \in \CutTimes_L(\varepsilon)}
         E\!
        \left[ 
            \prod_{l=1}^L 
            \func_N(S^1_{a_l}, a_l) 
            \mathbf{1}_{D_R}(\bm{S}^\dagger_{a_l})
            h_{N, a_l}^{(\varepsilon)}(\bm{S} \circ \theta_{a_l})
        \right]\\
        &=\sum_{(a_l)_{l=1}^L \in \CutTimes_L(\varepsilon)}
        E\!
        \left[
            \prod_{l=1}^{L} 
            \func_N(S^1_{a_l}, a_l) 
            \mathbf{1}_{D_R}(\bm{S}^\dagger_{a_l}) 
            h_{N, a_l}^{(\varepsilon)}(\bm{S} \circ \theta_{a_l})    
            \mathbf{1}_{\{|\bm{S}_{b_l}|\leq 4R \sqrt{b_{l}}\}}   
        \right] 
        + 
        o_{f,L}(1;R\to \infty).\notag
    \end{align}
    We also have
    \begin{align}
        & \sum_{(a_l)_{l=1}^L \in \CutTimes_L(\varepsilon)}
        \prod_{l=1}^{L} 
        E\! 
        \left[  \func_N(S^1_{a_l}, a_l)  
            \mathbf{1}_{D_R}(\bm{S}^\dagger_{a_l})
            h_{N, a_l}^{(\varepsilon)}(\bm{S} \circ \theta_{a_l})
        \right]\label{eq: adding restriction at bl2}\\
        &= 
        \sum_{(a_l)_{l=1}^L \in \CutTimes_L(\varepsilon)}
        \prod_{l=1}^{L} 
        E\! 
        \left[
            \func_N(S^1_{a_l}, a_l)  
            \mathbf{1}_{D_R}(\bm{S}^\dagger_{a_l})
            h_{N, a_l}^{(\varepsilon)}(\bm{S} \circ \theta_{a_l})   
            \mathbf{1}_{\{|\bm{S}_{b_l}|\leq 4R \sqrt{b_{l}}\}} 
        \right]  +
        o_{f,L}(1; R \to \infty).
    \end{align}
\end{lem}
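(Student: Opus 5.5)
We bound the difference term by term, inserting the restriction at one endpoint at a time. By Lemma~\ref{lem:prod diff}, applied to the product over $l$ with $a_l$-factors containing the indicator $\mathbf 1_{\{|\bm S_{b_l}|\le 4R\sqrt{b_l}\}}$ and $b_l$-factors without it (or vice versa), the difference of the two sides of the first identity is a sum over $k\in\{1,\dots,L\}$ of expectations in which the $k$-th factor carries $\mathbf 1_{\{|\bm S_{b_k}|>4R\sqrt{b_k}\}}$ and the other factors carry or do not carry the restriction. Using \eqref{eq:bound f,g,h} ($|\func_N|\le\|f\|_\infty/\log N$, $0\le h\le1$) and dropping all remaining indicators except $\mathbf 1_D(\bm S_{a_l})$, each such term is bounded in absolute value by
\begin{equation}
    \Bigl(\frac{\|f\|_\infty}{\log N}\Bigr)^{L}
    \sum_{(a_l)_{l=1}^L\in\CutTimes_L(\varepsilon)}
    E\Bigl[\prod_{l=1}^{L}\mathbf 1_D(\bm S_{a_l})\,\mathbf 1_{\{|\bm S_{b_k}|>4R\sqrt{b_k}\}}\Bigr].
\end{equation}
Since $a_{k+1}>b_k$ (well-separated clusters, as $a^{1+\varepsilon}\ge a^{1-\varepsilon}$), we can apply the Markov property at time $b_k$ to bound the collisions after $b_k$ by $\sup_{x,y}E_{(x,y)}[\prod_{l>k}\mathbf 1_D(\bm S_{a_l-b_k})]\le c_1^{L-k}\prod_{l>k}(a_l-a_{l-1}')^{-1}$ via Lemma~\ref{lem:col est}, where $a'_k=b_k$ and $a'_l=a_l$ otherwise. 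Then at time $a_k$, we apply the Markov property again; given $\bm S_{a_k}=(x,x)$ with $|x|\le R\sqrt{a_k}$ (this is where the $D_R$ indicator on the left, which we keep for the $k$-th factor, is used), the event $|\bm S_{b_k}|>4R\sqrt{b_k}$ forces $|\bm S_{b_k}-\bm S_{a_k}|\ge 4R\sqrt{b_k}-\sqrt2R\sqrt{a_k}\ge 2R\sqrt{b_k-a_k}$ since $b_k\ge a_k$. By Chebyshev and the finite second moment, this has probability $\lesssim (b_k-a_k)/(R^2 b_k)\lesssim R^{-2}$. The collisions up to $a_k$ contribute $c_1^k\prod_{l\le k}(a_l-a_{l-1})^{-1}$ by Lemma~\ref{lem:col est}.

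Summing over the times, the factor $\prod_l (a_l-a'_{l-1})^{-1}$ summed over $a_l\le N^\kappa$ is at most $(c_2\kappa\log N)^L$ as in \eqref{eq:col est 2}. Here the shift of $a_{k+1}-b_k$ versus $a_{k+1}-a_k$ is harmless, since we sum $1/m$ over $m\le N^\kappa$ anyway. Hence each term is $\lesssim_{f,L} R^{-2}$, uniformly in $N,\delta,\varepsilon$, which gives the $o_{f,L}(1;R\to\infty)$ error.

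The second identity is easier: each factor is a single expectation, and we again use Lemma~\ref{lem:prod diff}. We bound the single-factor difference by $\frac{\|f\|_\infty}{\log N}P(\bm S_{a_k}\in D,\,|\bm S_{a_k}|\le \sqrt2R\sqrt{a_k},\,|\bm S_{b_k}|>4R\sqrt{b_k})\lesssim \frac{1}{\log N}\frac{1}{a_k}R^{-2}$, and each other factor by $\frac{\|f\|_\infty}{\log N}\frac{c_1}{a_l}$. Summing then gives $\prod_l(\log N)^{-1}\sum_{a\le N^\kappa}a^{-1}\lesssim_L \kappa^L$ times $R^{-2}$.

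The main obstacle is the Markov decoupling around $b_k$ in the first identity. We need the post-$b_k$ collision estimate to be taken as a supremum over starting points (which Lemma~\ref{lem:col est} provides) so that the restriction event on $[a_k,b_k]$ can be isolated, and we need to check that the event's probability bound does not conflict with conditioning on collisions at $a_k$. Working with the pre-$a_k$ collisions, the $[a_k,b_k]$ increment, and the post-$b_k$ collisions successively resolves this.
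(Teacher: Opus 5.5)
Your proposal is correct and follows essentially the same route as the paper. Both use Lemma~\ref{lem:prod diff} with \eqref{eq:bound f,g,h}, then use the $D_R$ constraint at $a_k$ to turn $\{|\bm S_{b_k}|>4R\sqrt{b_k}\}$ into a large increment on $[a_k,b_k]$. Chebyshev controls that increment, and the Markov property with Lemma~\ref{lem:col est} controls the remaining collisions, giving an error $\lesssim_{f,L}R^{-2}$ uniformly in $N,\delta,\varepsilon$.

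The only slip is in your first displayed bound, which drops $\mathbf 1_{D_R}(\bm S^\dagger_{a_k})$ for the $k$-th factor. It should keep it, as your text later says, because the increment step needs $|\bm S_{a_k}|\le\sqrt2R\sqrt{a_k}$.
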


\begin{proof}
    On $\{\bm{S}^\dagger_{a_k}\in D_R\}$, the event
    $\{|\bm{S}_{b_k}|>4R\sqrt{b_k}\}$ implies $|\bm{S}_{b_k}-\bm{S}_{a_k}|>2R\sqrt{a_k}$.
    Chebyshev's inequality and the finite second moment of $|\bm{S}_1|$ yield
    \begin{equation}
        \sup_{\bm{x} \in \ZN^4}
        P_{\bm x}\left(
            |\bm{S}_{b_k}-\bm{S}_{a_k}|>2R\sqrt{a_k}
        \right)
        \lesssim \frac{b_k-a_k}{R^2a_k}
        \lesssim \frac{1}{R^2}.
    \end{equation}
    Thus, by the Markov property and Lemma~\ref{lem:col est}, 
    analogously to the proof of Proposition~\ref{prop:annulus},
    we obtain, for each $k \in \{1,\ldots,L\}$,
    \begin{align}
        &\sum_{(a_l)_{l=1}^L\in\CutTimes_L(\varepsilon)}
        E\!
        \left[
          \left(  \prod_{l=1}^L\mathbf 1_{D_R}(\bm{S}^\dagger_{a_l})\right)
            \mathbf 1_{\{|\bm{S}_{b_k}|>4R\sqrt{b_k}\}}
        \right]\\
        &\leq    
        \sum_{(a_l)_{l=1}^L\in\CutTimes_L(\varepsilon)}
        E\!
        \left[
          \left(  \prod_{l=1}^L\mathbf 1_D(\bm{S}_{a_l})\right)
            \mathbf 1_{\{|\bm{S}_{b_k} - \bm{S}_{a_k}|>2R\sqrt{a_k}\}}
        \right]\\
        &\lesssim_{f,L} 
        \frac{1}{R^2} (\log N)^L.
    \end{align}
    This, combined with \eqref{eq:bound f,g,h} and Lemma~\ref{lem:prod diff}, 
    yields the first claim.
    The second claim can be verified by the same argument.
\end{proof}

Recall the sequence $(\rho_n)_{n \geq 1}$ from Lemma~\ref{lem:LCLT}.
Set, for $a, b \in \NN$, 
\begin{equation}
    \Psi(b,a)
    \coloneqq
    \sqrt{\frac{b}{a}}+\rho_b.
    \label{eq: error term function at L-1}
\end{equation}
The function $\Psi$ quantifies the error in decoupling two consecutive clusters.
The next lemma shows that this error remains negligible after summing over all admissible cluster-starting times.

\begin{lem} \label{lem:asymp indep error}
    For each $\delta \in (0,1)$, $\varepsilon \in (0,1/2)$, and $k \geq 2$,
    \begin{align}
        &(\log N )^{-1}\hspace{-1em}
        \sum_{(a_l)_{l=1}^k \in \CutTimes_k(\varepsilon)}
        a_k ^{-1}   \Psi(b_{k-1},  a_k)
        \left| 
            E\!
            \left[
                \prod_{l=1}^{k-1} 
                \func_N(S^1_{a_l}, a_l) 
                \mathbf{1}_{D_R}(\bm{S}^\dagger_{a_l})
                h_{N, a_l}^{(\varepsilon)}(\bm{S} \circ \theta_{a_l})
                \mathbf{1}_{\{|\bm{S}_{b_l}|\leq 4R \sqrt{b_{l}}\}}  
            \right]
        \right|\\
        &=  
        o_{f,k,\delta,\varepsilon}(1; N \to \infty).
    \end{align}
\end{lem}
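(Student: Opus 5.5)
We will bound the expectation crudely and then exploit the separation built into $\CutTimes_k(\varepsilon)$. By \eqref{eq:bound f,g,h} we have $|\func_N|\le \|f\|_\infty/\log N$ and $h\le 1$. Since $\mathbf{1}_{D_R}(\bm{S}^\dagger_{a_l})\le \mathbf{1}_D(\bm{S}_{a_l})$, dropping the remaining indicators and applying \eqref{eq:col est 1} gives, with $a_0=0$,
\begin{equation}
    \left|E[\cdots]\right|
    \le
    \Bigl(\frac{c_1\|f\|_\infty}{\log N}\Bigr)^{k-1}
    \prod_{l=1}^{k-1}\frac{1}{a_l-a_{l-1}} .
\end{equation}
The left-hand side of the lemma is therefore bounded, up to a constant depending on $f$ and $k$, by
\begin{equation}
    (\log N)^{-k}
    \sum_{(a_l)_{l=1}^{k-1}}
    \prod_{l=1}^{k-1}\frac{1}{a_l-a_{l-1}}
    \sum_{a_k} \frac{1}{a_k}\,\Psi(b_{k-1},a_k),
\end{equation}
where the inner sum runs over $a_k\in[N^\delta,N^\kappa]$ with $a_k> a_{k-1}+\lfloor a_{k-1}^{1+\varepsilon}\rfloor$.

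The key step is to show that, uniformly in $a_{k-1}\in[N^\delta,N^\kappa]$, the inner sum is $o_{\delta,\varepsilon}(\log N)$. The sum is split according to the two parts of $\Psi$.

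For the first part, note that $b_{k-1}\le 2a_{k-1}$ and $a_k\ge a_{k-1}^{1+\varepsilon}$. Hence
\begin{equation}
    \sum_{a_k\ge a_{k-1}^{1+\varepsilon}} a_k^{-3/2}\sqrt{b_{k-1}}
    \lesssim
    \sqrt{a_{k-1}}\,a_{k-1}^{-(1+\varepsilon)/2}
    = a_{k-1}^{-\varepsilon/2}
    \le N^{-\delta\varepsilon/2},
\end{equation}
which is $o(1)$.

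For the second part, $\rho$ is non-increasing and $b_{k-1}\ge a_{k-1}\ge N^\delta$, so $\rho_{b_{k-1}}\le\rho_{\lceil N^\delta\rceil}$. Combined with $\sum_{a_k\le N^\kappa}a_k^{-1}\lesssim_\kappa \log N$, this part is at most $\rho_{\lceil N^\delta\rceil}\log N=o_\delta(\log N)$.

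It remains to assemble the bound. The outer sum over $(a_l)_{l=1}^{k-1}$ of $\prod_l (a_l-a_{l-1})^{-1}$ is at most $(1+\kappa\log N)^{k-1}$, exactly as in the proof of Lemma~\ref{lem:col est}. The total is thus
\begin{equation}
    (\log N)^{-k}\cdot O_{\kappa,k}\bigl((\log N)^{k-1}\bigr)\cdot\bigl(o(1)+\rho_{\lceil N^\delta\rceil}\log N\bigr)
    = o_{f,k,\delta,\varepsilon}(1;N\to\infty).
\end{equation}

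The main obstacle is only the bookkeeping. The $\rho$-part really does carry a full factor $\log N$ from the sum over $a_k$, and it is saved only because $\rho_{N^\delta}\to0$. The $\sqrt{b/a}$-part is saved by the gap condition defining $\CutTimes_k(\varepsilon)$, and no further cancellation is needed beyond these two observations.
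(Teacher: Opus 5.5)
Your proof is correct and follows the paper's argument: bound the expectation crudely via \eqref{eq:bound f,g,h} and Lemma~\ref{lem:col est}, then get the smallness of $\Psi$ from the gap condition in $\CutTimes_k(\varepsilon)$ together with $\rho_{N^\delta}\to0$. The only cosmetic difference is in the $\sqrt{b_{k-1}/a_k}$ part: the paper bounds $\Psi(b_{k-1},a_k)$ uniformly by $o_{\delta,\varepsilon}(1)$ and pays the full $\log N$ from $\sum_{a_k} a_k^{-1}$, whereas you sum $\sqrt{b_{k-1}}\,a_k^{-3/2}$ directly and obtain the even smaller bound $a_{k-1}^{-\varepsilon/2}$.
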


\begin{proof}
    Let $(a_l)_{l=1}^k \in \CutTimes_k(\varepsilon)$.
    For each $k \geq 2$,
    by definition,
    we have 
    \begin{equation}
        b_{k-1}= a_{k-1} + \lfloor a_{k-1}^{1-\varepsilon} \rfloor,\qquad 
        a_k\geq a_{k-1} +  a_{k-1}^{1+\varepsilon},\qquad 
        a_{k-1} \wedge b_{k-1} \geq \lfloor N^\delta \rfloor.
    \end{equation}
    This, combined with the monotonicity of $(\rho_n)_{n \geq 1}$, yields 
    \begin{align}
        \Psi(b_{k-1},  a_k) 
        \leq    
        \sqrt{\frac{a_{k-1}+a_{k-1}^{1-\varepsilon}}{a_{k-1}+a_{k-1}^{1+\varepsilon}}} + \rho_{\lfloor N^\delta \rfloor}
        \leq    
        \sqrt{\frac{1+a_{k-1}^{-\varepsilon}}{1+a_{k-1}^\varepsilon}} + \rho_{\lfloor N^\delta \rfloor}
        =
        o_{\delta,\varepsilon}(1;N \to \infty).
    \end{align}
    Using this and \eqref{eq:bound f,g,h},
    we deduce that the left-hand side of the claim is bounded by 
    \begin{align}
       &o_{f,\delta,\varepsilon}(1;N\to \infty) 
       \cdot 
       (\log N)^{-k}\hspace{-1em}
       \sum_{(a_l)_{l=1}^k \in \CutTimes_k(\varepsilon)}
        a_k ^{-1}  
        E\!
        \left[
            \prod_{l=1}^{k-1} 
            \mathbf{1}_{D_R}(\bm{S}^\dagger_{a_l})
        \right]\\
        &\leq
        o_{f,\delta,\varepsilon}(1;N\to \infty) 
        \cdot 
        (\log N )^{-(k-1)}\hspace{-2em}
        \sum_{  1\leq a_1 < \cdots < a_{k-1} \leq \lfloor N^\kappa \rfloor} 
        E\!
        \left[
            \prod_{l=1}^{k-1} 
            \mathbf{1}_D(\bm{S}_{a_l})  
        \right].
    \end{align}
    Therefore, by Lemma~\ref{lem:col est}, we obtain the desired result.
\end{proof}

We now complete the proof of Proposition~\ref{prop:indep decomp}.

\begin{proof} [{Proof of Proposition~\ref{prop:indep decomp}}]
    The case $L=1$ is immediate, 
    since the two sums in the claim coincide exactly.  
    Hence, we assume $L\geq2$ below.
    Thanks to Lemma~\ref{lem:R rstrct1},  
    it is enough to show that
    \begin{align}
        &\sum_{(a_l)_{l=1}^L \in \CutTimes_L(\varepsilon)}
        E\!
        \left[
            \prod_{l=1}^{L} 
            \func_N(S^1_{a_l}, a_l) 
            \mathbf{1}_{D_R}(\bm{S}^\dagger_{a_l}) 
            h_{N, a_l}^{(\varepsilon)}(\bm{S} \circ \theta_{a_l})    
            \mathbf{1}_{\{|\bm{S}_{b_l}|\leq 4R \sqrt{b_{l}}\}}   
        \right]\\
        &= 
        \sum_{(a_l)_{l=1}^L \in \CutTimes_L(\varepsilon)}
        \prod_{l=1}^{L} 
        E\! 
        \left[
            \func_N(S^1_{a_l}, a_l)  
            \mathbf{1}_{D_R}(\bm{S}^\dagger_{a_l})
            h_{N, a_l}^{(\varepsilon)}(\bm{S} \circ \theta_{a_l})   
            \mathbf{1}_{\{|\bm{S}_{b_l}|\leq 4R \sqrt{b_{l}}\}} 
        \right]  +
        o_{f,L,\delta,\varepsilon,R}(1; N \to \infty).
    \end{align}
    We establish this by proving that, 
    for each $2 \leq k \leq L$,
    \begin{align}
        &\sum_{(a_l)_{l=1}^L \in \CutTimes_L(\varepsilon)}
        \prod_{l=k+1}^{L}
        E\!
        \left[
            \func_N(S^1_{a_l}, a_l) 
            \mathbf{1}_{D_R}(\bm{S}^\dagger_{a_l}) 
            h_{N, a_l}^{(\varepsilon)}(\bm{S} \circ \theta_{a_l}) 
            \mathbf{1}_{\{|\bm{S}_{b_l}|\leq 4R \sqrt{b_{l}}\}} 
        \right]
        \\
        &\hspace{4cm}\times 
        E\!
        \left[ 
            \prod_{l=1}^{k} 
            \func_N(S^1_{a_l}, a_l) 
            \mathbf{1}_{D_R}(\bm{S}^\dagger_{a_l})
            h_{N, a_l}^{(\varepsilon)}(\bm{S} \circ \theta_{a_l})
            \mathbf{1}_{\{|\bm{S}_{b_l}|\leq 4R \sqrt{b_{l}}\}} 
        \right]\\
        &= 
        \sum_{(a_l)_{l=1}^L \in \CutTimes_L(\varepsilon)}
        \prod_{l=k}^{L}
        E\! 
        \left[
            \func_N(S^1_{a_l}, a_l) 
            \mathbf{1}_{D_R}(\bm{S}^\dagger_{a_l}) 
            h_{N, a_l}^{(\varepsilon)}(\bm{S} \circ \theta_{a_l})  
            \mathbf{1}_{\{|\bm{S}_{b_l}|\leq 4R \sqrt{b_l}\}} 
        \right]
        \\
        &\hspace{4cm}\times 
        E\!
        \left[ 
            \prod_{l=1}^{k-1}
            \func_N(S^1_{a_l}, a_l) 
            \mathbf{1}_{D_R}(\bm{S}^\dagger_{a_l})
            h_{N, a_l}^{(\varepsilon)}(\bm{S} \circ \theta_{a_l}) 
            \mathbf{1}_{\{|\bm{S}_{b_l}|\leq 4R \sqrt{b_{l}}\}} 
        \right]
        \\
        &\qquad
        + 
        o_{ f,L, \delta, \varepsilon,R}(1; N \to \infty),
    \end{align}
    where products over the empty set are understood to be equal to $1$ as before. 
    Indeed, applying the above result successively for $k=L,\ldots,2$ gives the desired result.

    To this end, fix $k \in \{2,\ldots,L\}$.
    For $(a_l)_{l=1}^L \in \CutTimes_L(\varepsilon)$,
    it follows from the definition of $\CutTimes_L(\varepsilon)$ that,
    for all sufficiently large $N$,
    $a_k>2b_{k-1}$,
    as required for applying Proposition~\ref{prop:asymp indep}.
    Hence, 
    by the Markov property at time $b_{k-1}$ and Proposition~\ref{prop:asymp indep},
    we deduce that
    \begin{align}
        &E\!
        \left[ 
            \prod_{l=1}^{k} 
            \func_N(S^1_{a_l}, a_l) 
            \mathbf{1}_{D_R}(\bm{S}^\dagger_{a_l})
            h_{N, a_l}^{(\varepsilon)}(\bm{S} \circ \theta_{a_l})
            \mathbf{1}_{\{|\bm{S}_{b_l}|\leq 4R \sqrt{b_{l}}\}} 
        \right]\\
        &= 
        E
        \Biggl[
                \prod_{l=1}^{k-1} 
                \func_N(S^1_{a_l}, a_l) 
                \mathbf{1}_{D_R}(\bm{S}^\dagger_{a_l})
                h_{N, a_l}^{(\varepsilon)}(\bm{S} \circ \theta_{a_l})
            \mathbf{1}_{\{|\bm{S}_{b_l}|\leq 4R \sqrt{b_{l}}\}} 
        \Biggr.
        \\
        &\hspace{4cm}\times 
        \Biggl.
            E\!
            \left[
                \func_N(S^1_{a_k}, a_k)
                \mathbf{1}_{D_R}(\bm{S}^\dagger_{a_k})
                h_{N, a_k}^{(\varepsilon)}(\bm{S} \circ \theta_{a_k})
                \mathbf{1}_{\{|\bm{S}_{b_k}|\leq 4R \sqrt{b_k}\}} 
                \, \middle \vert\, 
                \bm{S}_{b_{k-1}}
            \right]
        \Biggr]\\
        &= 
        E\!
        \left[
            \prod_{l=1}^{k-1} 
            \func_N(S^1_{a_l}, a_l) 
            \mathbf{1}_{D_R}(\bm{S}^\dagger_{a_l})
            h_{N, a_l}^{(\varepsilon)}(\bm{S} \circ \theta_{a_l}) 
            \mathbf{1}_{\{|\bm{S}_{b_l}|\leq 4R \sqrt{b_{l}}\}}
        \right]\\
        &\hspace{4cm}\times
        E\!
        \left[
            \func_N(S^1_{a_k}, a_k)
            \mathbf{1}_{D_R}(\bm{S}^\dagger_{a_k})
            h_{N, a_k}^{(\varepsilon)}(\bm{S} \circ \theta_{a_k})
            \mathbf{1}_{\{|\bm{S}_{b_k}|\leq 4R \sqrt{b_k}\}} 
        \right]\\
        &\quad 
        + 
        \frac{O_{f,R}(1)}{\log N} 
        a_k^{-1}  \Psi(b_{k-1},a_k)
        E\!
        \left[
            \prod_{l=1}^{k-1} 
            \func_N(S^1_{a_l}, a_l) 
            \mathbf{1}_{D_R}(\bm{S}^\dagger_{a_l})
            h_{N, a_l}^{(\varepsilon)}(\bm{S} \circ \theta_{a_l})\mathbf{1}_{\{|\bm{S}_{b_l}|\leq 4R \sqrt{b_{l}}\}}
        \right].
    \end{align}
    By Lemma \ref{lem:col est},
    it follows that
    \begin{align}
        &\sum_{1 \leq a_{k+1} < \cdots < a_L \leq \lfloor N^\kappa \rfloor}
        \prod_{l=k+1}^{L}
        \left|
            E\!
            \left[
                \func_{N}(S^1_{a_l}, a_l) 
                \mathbf{1}_{D_R}(\bm{S}^\dagger_{a_l}) 
                h_{N, a_l}^{(\varepsilon)}(\bm{S} \circ \theta_{a_l})
            \right]
        \right|\\
        &\lesssim_{f,L}
        (\log N)^{-(L-k)}\hspace{-2em}
        \sum_{1 \leq a_{k+1} < \cdots < a_L \leq \lfloor N^\kappa \rfloor}
        \prod_{l=k+1}^{L}
        P(S^1_{a_l} = S^2_{a_l})\\
        &\lesssim_{f,L}
        1.
    \end{align}
    This yields
    \begin{align}
        &\sum_{(a_l)_{l=1}^L \in \CutTimes_L(\varepsilon)} 
        \prod_{i=k+1}^{L}
        \left|
            E\!
            \left[
                \func_N(S^1_{a_i}, a_i) 
                \mathbf{1}_{D_R}(\bm{S}^\dagger_{a_i}) 
                h_{N, a_i}^{(\varepsilon)}(\bm{S} \circ \theta_{a_i})
                \mathbf{1}_{\{|\bm{S}_{b_i}|\leq 4R \sqrt{b_i}\}}
            \right]
        \right|
        \\
        &\hspace{3cm}\times 
        a_k^{-1}  \Psi(b_{k-1},a_k)
        \left|
            E\!
            \left[
                \prod_{l=1}^{k-1} 
                \func_N(S^1_{a_l}, a_l) 
                \mathbf{1}_{D_R}(\bm{S}^\dagger_{a_l})
                h_{N, a_l}^{(\varepsilon)}(\bm{S} \circ \theta_{a_l})
                \mathbf{1}_{\{|\bm{S}_{b_l}|\leq 4R \sqrt{b_{l}}\}}
            \right]
        \right|
        \\
        &\lesssim_{f,L,\delta}
        \sum_{(a_l)_{l=1}^k \in \CutTimes_k(\varepsilon)}\hspace{-1.4em}
        a_k^{-1} \Psi(b_{k-1},a_k)
        \left| 
            E\!
            \left[
                \prod_{l=1}^{k-1} 
                \func_N(S^1_{a_l}, a_l) 
                \mathbf{1}_{D_R}(\bm{S}^\dagger_{a_l})
                h_{N, a_l}^{(\varepsilon)}(\bm{S} \circ \theta_{a_l})\mathbf{1}_{\{|\bm{S}_{b_l}|\leq 4R \sqrt{b_{l}}\}}
            \right]
        \right|.
    \end{align}
    Applying Lemma~\ref{lem:asymp indep error},
    we obtain the desired result.
\end{proof}

Below, we state a consequence of Corollary~\ref{cor:neg col} and Proposition~\ref{prop:indep decomp}.

\begin{cor}\label{cor:indep decomp}
    For each $\delta \in (0,1)$,
    \begin{align}
        &\sum_{L=1}^{\infty}
        \sum_{(a_l)_{l=1}^L \in \CutTimes_L(-\varepsilon)}\!
        E\!
        \left[
            \prod_{l=1}^{L} 
            \func_N(S^1_{a_l}, a_l) 
            \mathbf{1}_D(\bm{S}_{a_l})
            h^{(\varepsilon)}_{N,a_l}(\bm{S} \circ \theta_{a_l})
        \right]\\
        &= 
        \sum_{L=1}^M
        \sum_{(a_l)_{l=1}^L \in \CutTimes_L(\varepsilon)}
        \prod_{l=1}^{L} 
        E\!
        \left[
            \func_N(S^1_{a_l}, a_l) 
            \mathbf{1}_{D_R}(\bm{S}^\dagger_{a_l})
            h_{N,a_l}^{(\varepsilon)}(\bm{S} \circ \theta_{a_l})
        \right]\\
        &\qquad
        +
        o_{f,\delta}(1; N \to \infty\, \triangleright\, R \to \infty\, \triangleright\, \varepsilon \to 0\, \triangleright\, M \to \infty).
        \label{eq:indep decomp}
    \end{align}
\end{cor}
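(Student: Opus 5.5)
The corollary follows by chaining Corollary~\ref{cor:neg col} with Proposition~\ref{prop:indep decomp}, applied term by term in $L$ for $L \leq M$.

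First, I apply Corollary~\ref{cor:neg col}. It replaces the full expansion on the left-hand side by
\[
\sum_{L=1}^M \sum_{(a_l)_{l=1}^L \in \CutTimes_L(\varepsilon)} E\Bigl[\prod_{l=1}^L \func_N(S^1_{a_l},a_l)\mathbf{1}_{D_R}(\bm{S}^\dagger_{a_l}) h^{(\varepsilon)}_{N,a_l}(\bm{S}\circ\theta_{a_l})\Bigr],
\]
with an error of the form $o_{f,\delta}(1; N\to\infty \triangleright R\to\infty \triangleright \varepsilon\to0 \triangleright M\to\infty)$.

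Second, for each fixed $L \in \{1,\ldots,M\}$, I apply Proposition~\ref{prop:indep decomp}. This replaces the expectation of the product by the product of expectations. The error for each $L$ is $o_{f,L,\delta,\varepsilon}(1; N\to\infty \triangleright R\to\infty)$.

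Third, I sum these finitely many errors over $L \leq M$. The total error is $o_{f,M,\delta,\varepsilon}(1; N\to\infty \triangleright R\to\infty)$. Because $\varepsilon$ and $M$ are held fixed while $N \to \infty$ and then $R \to \infty$, this error has $\limsup_{R}\limsup_N$ equal to zero for every fixed $\varepsilon$ and $M$. It is therefore also negligible in the iterated limit $N\to\infty \triangleright R\to\infty \triangleright \varepsilon\to0 \triangleright M\to\infty$, and can be absorbed into the error term of Corollary~\ref{cor:neg col}.

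The only point that needs care is the order of the iterated limits. The dependence of the Proposition~\ref{prop:indep decomp} errors on $\varepsilon$ and $L$ is harmless, because $\varepsilon$ and $M$ are sent to their limits only after $N$ and $R$. Concretely, I write the total error as $r^{(1)} + r^{(2)}$, where $r^{(1)}$ is the error from Corollary~\ref{cor:neg col} and $r^{(2)}$ is the sum of the errors from Proposition~\ref{prop:indep decomp}. I then take $\limsup$ in the stated order. For each fixed $\varepsilon$ and $M$, the two inner limsups of $r^{(2)}$ already vanish. The remaining outer limsups of $r^{(1)}$ vanish by Corollary~\ref{cor:neg col}.
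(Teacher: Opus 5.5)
Your proposal is correct and is the same argument as the paper, which states this corollary without a written proof, as a direct consequence of Corollary~\ref{cor:neg col} and Proposition~\ref{prop:indep decomp}. Your bookkeeping fills in the implicit step: the finitely many per-$L$ errors for $L \leq M$ are summed using subadditivity of $\limsup$, and they vanish in the two innermost limits for fixed $\varepsilon$ and $M$, so they are absorbed into the error term of Corollary~\ref{cor:neg col}.
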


\subsection{Approximation within a single cluster}

In the cluster decomposition of \eqref{eq:indep decomp},
\begin{align}
    &\func_N(S^1_{a_l}, a_l) 
    \mathbf{1}_{D_R}(\bm{S}^\dagger_{a_l})
    h_{N,a_l}^{(\varepsilon)}(\bm{S} \circ \theta_{a_l})\\
    &=
    \func_N(S^1_{a_l}, a_l) 
    \mathbf{1}_{D_R}(\bm{S}^\dagger_{a_l})
    \exp
    \biggl(
        - 
        \sum_{k=a_l+1}^{a_l + \lfloor a_l^{1-\varepsilon} \rfloor} 
        f_N(S^1_k, k) 
        \mathbf{1}_D(\bm{S}_k)
    \biggr)
\end{align}
represents the contribution from the $l$-th cluster.
In this subsection, we aim to freeze the space and time arguments appearing in $f_N$ at the start of each cluster, that is, we replace
 $f_N(S^1_k,k)$ by $f_N(S^1_{a_l},a_l)$.
The following proposition makes this precise.

\begin{prop}\label{prop:time approx}
    For each $L \geq 1$, $\delta \in (0,1)$, $\varepsilon \in (0,1/2)$, and $R > 1$,
    \begin{align}
        &\sum_{(a_l)_{l=1}^L \in \CutTimes_L(\varepsilon)}
        \prod_{l=1}^{L} 
        E\! 
        \left[
            \func_N(S^1_{a_l}, a_l) 
            \mathbf{1}_{D_R}(\bm{S}^\dagger_{a_l})
            \exp 
            \biggl(
                - \sum_{k=a_l+1}^{a_l + \lfloor a_l^{1-\varepsilon} \rfloor} 
                f_N(S^1_k, k)\, \mathbf{1}_D(\bm{S}_k)
            \biggr)
        \right]
        \\
        &=
        \sum_{(a_l)_{l=1}^L \in \CutTimes_L(\varepsilon)}
        \prod_{l=1}^{L} 
        E\!
        \left[
            \func_N(S^1_{a_l}, a_l) 
            \mathbf{1}_{D_R}(\bm{S}^\dagger_{a_l})
            \exp 
            \biggl(
                - f_N(S^1_{ a_l}, a_l)
                \sum_{k=a_l+1}^{a_l + \lfloor a_l^{1-\varepsilon} \rfloor} 
                \mathbf{1}_D(\bm{S}_k)
            \biggr)
        \right]
        \\
        &\qquad
        +
        o_{f, L, \delta, \varepsilon, R}(1; N \to \infty).
    \end{align}
\end{prop}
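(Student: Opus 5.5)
The plan is to compare the two products factor by factor, controlling each factor's difference by the uniform continuity of $f$, and then sum using the collision bounds of Lemma~\ref{lem:col est}. For each $l$ write $X_l$ for the expectation on the left and $Y_l$ for the one on the right. By Lemma~\ref{lem:prod diff}, the difference of products is a telescoping sum. Each term is $(X_\ell-Y_\ell)$ multiplied by the other factors. Each other factor is bounded in absolute value by $\frac{\|f\|_\infty}{\log N}P(S^1_{a_l}=S^2_{a_l})$, by \eqref{eq:bound f,g,h} and $e^{-x}\le 1$. It therefore suffices to show, uniformly over $a\in[N^\delta,N^\kappa]$, that
\begin{equation}
    |X(a)-Y(a)| \le \frac{\eta_N}{\log N}\,P(S^1_a=S^2_a),
\end{equation}
with $\eta_N=o_{f,\delta,\varepsilon,R}(1;N\to\infty)$. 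Granting this, the sum over $(a_l)\in\CutTimes_L(\varepsilon)$ is bounded by $\eta_N(\|f\|_\infty)^{L-1}(\log N)^{-L}\sum_{a_1<\dots<a_L\le N^\kappa}\prod_l P(S^1_{a_l}=S^2_{a_l})$. Lemma~\ref{lem:col est} bounds this by $\eta_N\, O_{f,L}(1)$, which tends to $0$.

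For the single-cluster estimate, use $|e^{-x}-e^{-y}|\le|x-y|$ and $|\func_N|\le\|f\|_\infty/\log N$. This gives
\begin{equation}
    |X(a)-Y(a)|\le \frac{\|f\|_\infty}{\log N}\sum_{k=a+1}^{a+\lfloor a^{1-\varepsilon}\rfloor} E\!\left[\mathbf 1_{D_R}(\bm S^\dagger_a)\mathbf 1_D(\bm S_k)\,|f_N(S^1_k,k)-f_N(S^1_a,a)|\right].
\end{equation}
The key step is to make $|f_N(S^1_k,k)-f_N(S^1_a,a)|$ small, namely $o(1)/\log N$. Compare $\phi_N(S^1_k,k)$ with $\phi_N(S^1_a,a)$:
\begin{itemize}[leftmargin=*]
    \item \emph{Time coordinates.} They differ by $\log(k/a)/\log N\le a^{-\varepsilon}/\log N\to 0$.
    \item \emph{Spatial coordinates.} On $\{\bm S^\dagger_a\in D_R\}$ we have $|S^1_a|\le R\sqrt a$. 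Then $|S^1_k/\sqrt k-S^1_a/\sqrt a|\le R(1-\sqrt{a/k})+|S^1_k-S^1_a|/\sqrt a$, and the first part is at most $R a^{-\varepsilon}$.
\end{itemize}
Fix $\eta>0$. On the event $\{|S^1_k-S^1_a|\le\eta\sqrt a\}$, uniform continuity of $f$ gives $|f_N(S^1_k,k)-f_N(S^1_a,a)|\le \omega_f(\eta+Ra^{-\varepsilon}+a^{-\varepsilon})/\log N$, where $\omega_f$ is the modulus of continuity of $f$. On the complement we use the trivial bound $2\|f\|_\infty/\log N$.

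The remaining task is the counting. By the Markov property at time $a$ and Lemma~\ref{lem:col est}, $\sum_{k} P(\bm S_a\in D,\bm S_k\in D)\lesssim P(S^1_a=S^2_a)\sum_{j\le a^{1-\varepsilon}} j^{-1}\lesssim P(S^1_a=S^2_a)\log N$. This handles the good event and contributes $\omega_f(\eta+o(1))\,\|f\|_\infty P(S^1_a=S^2_a)/\log N$.

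The main obstacle is the bad event, where a naive bound loses the $\log N$ gained from the collisions. The plan is to bound, for $j=k-a$, the quantity $P_{(x,x)}(\bm S_j\in D,\ |S^1_j-x|>\eta\sqrt a)$ by a Chebyshev-type estimate that is sharper than $j^{-1}$. Write it as $\sum_{|z|>\eta\sqrt a}p_j(z)^2\lesssim j^{-1}P(|S_j|>\eta\sqrt a)\lesssim j^{-1}\cdot j/(\eta^2 a)$, using $p_j(z)^2\lesssim j^{-1}p_j(z)$ as in Lemma~\ref{lem:annu neg}. Summing over $j\le a^{1-\varepsilon}$ gives $O(a^{-\varepsilon}\eta^{-2})=o(1)$, much smaller than $\log N$. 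So the bad event contributes only $o_{\eta}(1)\,P(S^1_a=S^2_a)/\log N$. Letting $N\to\infty$ and then $\eta\to0$ yields the desired $\eta_N\to0$, which completes the plan.
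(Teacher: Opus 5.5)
Your proposal is correct and follows the paper's proof in all essentials. The shared steps are the telescoping via Lemma~\ref{lem:prod diff}, the Lipschitz bound $|e^{-x}-e^{-y}|\le|x-y|$, a good/bad split according to the displacement of $S^1$ over the cluster window, uniform continuity of $f$ on the good event, and the Markov property together with Lemma~\ref{lem:col est} for the counting.

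The only difference is on the bad event. The paper uses the single maximal event $E(a)$ with threshold $a^{1/2-\varepsilon/4}$, controls it by Kolmogorov's maximal inequality, and there simply bounds $|\zeta_N(a)-\zeta'_N(a)|\le 1$. Your per-$k$ events with threshold $\eta\sqrt a$ instead force you to keep the collision indicator and use $\sum_{|z|>\eta\sqrt a}p_j(z)^2\lesssim j^{-1}P(|S_j|>\eta\sqrt a)$; this works equally well.
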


We recall the scaling map $\phi_N$ from \eqref{eq:phi}. 
The proof of Proposition~\ref{prop:time approx} uses the following continuity estimate for $\phi_N$.

\begin{lem} \label{lem:phi_N conti}
    For any $x,y \in \ZN^2$ and $s, t \in \NN$,
    \begin{equation}
        |\phi_N(x, s) - \phi_N(y, t)|
        \leq 
        \frac{|x-y|}{\sqrt{s}}
        +
        \frac{|y|\,|s-t|}{2(s\wedge t)^{3/2}}
        + 
        \frac{|s-t|}{(s \wedge t) \log N}.
    \end{equation}
\end{lem}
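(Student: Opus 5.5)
The plan is to treat the two coordinates of $\phi_N$ separately, since $\phi_N(x,s)-\phi_N(y,t) = \bigl(x/\sqrt{s}-y/\sqrt{t},\ (\log s-\log t)/\log N\bigr)$. Then
\begin{equation}
|\phi_N(x,s)-\phi_N(y,t)| \le \Bigl|\frac{x}{\sqrt s}-\frac{y}{\sqrt t}\Bigr| + \frac{|\log s-\log t|}{\log N},
\end{equation}
because the Euclidean norm of a vector in $\RN^2\times\RN$ is bounded by the sum of the norms of its two components. It then suffices to bound each piece by the corresponding terms of the claim.

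For the spatial piece, we insert the intermediate point $y/\sqrt{s}$ and use the triangle inequality:
\begin{equation}
\Bigl|\frac{x}{\sqrt s}-\frac{y}{\sqrt t}\Bigr| \le \frac{|x-y|}{\sqrt s} + |y|\,\Bigl|\frac{1}{\sqrt s}-\frac{1}{\sqrt t}\Bigr|.
\end{equation}
The first term is already the first term of the claim. For the second term, apply the mean value theorem to $u\mapsto u^{-1/2}$ on the interval between $s$ and $t$. Its derivative has absolute value $\tfrac12 u^{-3/2}$, which is at most $\tfrac12 (s\wedge t)^{-3/2}$ on that interval. This gives $|s^{-1/2}-t^{-1/2}|\le |s-t|/(2(s\wedge t)^{3/2})$, which is the second term of the claim.

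For the temporal piece, apply the mean value theorem to $\log$ on the interval between $s$ and $t$. Its derivative $1/u$ is at most $1/(s\wedge t)$ there, so $|\log s-\log t|\le |s-t|/(s\wedge t)$. Dividing by $\log N$ gives the third term of the claim.

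Adding the three bounds proves the lemma. There is no real obstacle here. The only point needing care is that the correction term involves $|y|$ rather than $|x|$; this is exactly what the choice of intermediate point $y/\sqrt{s}$ produces, so it is that choice, and not $x/\sqrt{t}$, which must be made.
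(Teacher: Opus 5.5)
Your proposal is correct and follows the paper's proof essentially verbatim: you split off the two coordinates, use the triangle inequality with the intermediate point $y/\sqrt{s}$, and apply the mean value theorem to $u\mapsto u^{-1/2}$ and to $\log$ on the interval between $s$ and $t$. Nothing is missing.
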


\begin{proof}
    By definition,
    we have 
    \begin{equation}
        |\phi_N(x, s) - \phi_N(y, t)|
        \leq 
        \left|
            \frac{x}{\sqrt{s}} - \frac{y}{\sqrt{t}}
        \right|
        +
        \left|
            \frac{\log s}{\log N} - \frac{\log t}{\log N}
        \right|.
    \end{equation}
    By the triangle inequality,
    \begin{equation}
        \left|
            \frac{x}{\sqrt{s}}-\frac{y}{\sqrt{t}}
        \right|
        \leq
        \frac{|x-y|}{\sqrt{s}}
        +
        |y|
        \left|
            \frac{1}{\sqrt{s}}-\frac{1}{\sqrt{t}}
        \right|.
    \end{equation}
    The mean value theorem gives
    \begin{equation}
        \left|
            \frac{1}{\sqrt{s}}-\frac{1}{\sqrt{t}}
        \right|
        \leq
        \frac{|s-t|}{2(s\wedge t)^{3/2}},
        \qquad 
        \left| \frac{\log s}{\log N} - \frac{\log t}{\log N} \right| 
        \leq 
        \frac{|s-t|}{(s \wedge t) \log N}.
    \end{equation}
    This completes the proof.
\end{proof}

\begin{proof}[Proof of Proposition~\ref{prop:time approx}]
    Fix $L \geq 1$, $\delta \in (0,1)$, $\varepsilon \in (0,1/2)$, and $R > 1$.
    For $N \geq 2$ and $a \in \NN$,
    we write 
    \begin{gather}
        \zeta_N(a)
        \coloneqq 
        \exp 
        \biggl(
            - \sum_{k=a+1}^{a + \lfloor a^{1-\varepsilon} \rfloor} 
            f_N(S^1_k, k)\, \mathbf{1}_D(\bm{S}_k)
        \biggr),
        \\
        \zeta'_N(a)
        \coloneqq
        \exp 
        \biggl(
            - f_N(S^1_a, a)
            \sum_{k=a+1}^{a + \lfloor a^{1-\varepsilon} \rfloor} 
            \mathbf{1}_D(\bm{S}_k)
        \biggr).
    \end{gather}
    By \eqref{eq:e^-x conti},
    we have  
    \begin{equation} \label{eq:time approx1}
        |\zeta_N(a) - \zeta'_N(a)|
        \leq 
        \sum_{k=a+1}^{a + \lfloor a^{1-\varepsilon} \rfloor} 
        \bigl| f_N(S^1_k, k) - f_N(S^1_a, a) \bigr|\, 
        \mathbf{1}_D(\bm{S}_k).
    \end{equation}
    Let $N$ be large and let $a \in \NN$ satisfy $N^\delta \leq a \leq N^\kappa$.
    We define an event by
    \begin{equation}
        E(a) \coloneqq  
        \bigl\{ 
            |S_k^1 -S_a^1| < a^{\frac{1}{2}-\frac{\varepsilon}{4}},\, 
            \forall k\in [a+1,a + \lfloor a^{1-\varepsilon} \rfloor] \cap \NN 
        \bigr\}. 
    \end{equation}
    By Kolmogorov's maximal inequality \cite[Theorem~2.5.5, p.~84]{Durret_19_Probability} and the Markov property, 
    we have 
    \begin{equation} \label{eq:time approx2}
        \sup_{\bm{x} \in \ZN^4} P_{\bm{x}}(E(a)^c) 
        = O(1) a^{-\varepsilon/2}
        = o_{\delta, \varepsilon}(1; N \to \infty).
    \end{equation}
    On the event $E(a)\cap\{\bm{S}^\dagger_a\in D_R\}$, Lemma~\ref{lem:phi_N conti} yields that, as long as $a+1 \leq k \leq a + \lfloor a^{1-\varepsilon} \rfloor$,
    \begin{align}
        \bigl| \phi_N(S^1_k, k) - \phi_N(S^1_a, a) \bigr|
        &\leq    
        \frac{|S^1_k - S^1_a|}{\sqrt{k}}
        +
        \frac{|S^1_a|\,|k-a|}{2(k \wedge a)^{3/2}}
        + 
        \frac{|k-a|}{(k \wedge a) \log N}\\
        &\lesssim_R
        a^{-\varepsilon/4} + 2a^{-\varepsilon}\\
        &= 
        o_{\delta, \varepsilon, R}(1; N \to \infty),
    \end{align}
    where the last asymptotic follows from $N^\delta \leq a \leq N^\kappa$.
    Recalling that $f_N = (\log N)^{-1} f \circ \phi_N$
    and $f$ is uniformly continuous,
    we obtain
    \begin{equation}
        \bigl| f_N(S^1_k, k) - f_N(S^1_a, a) \bigr|
        \leq    
        \frac{o_{f,\delta, \varepsilon, R}(1; N \to \infty)}{\log N}.
    \end{equation}
    From this and \eqref{eq:time approx1},
    we deduce that, on the event $E(a)\cap\{\bm{S}^\dagger_a\in D_R\}$,
    \begin{equation}
        |\zeta_N(a) - \zeta'_N(a)|
        \leq
        \frac{o_{f,\delta,\varepsilon,R}(1; N \to \infty)}{\log N}
        \sum_{k=a+1}^{a + \lfloor a^{1-\varepsilon} \rfloor} 
        \mathbf{1}_D(\bm{S}_k).
    \end{equation}
    Combining this with \eqref{eq:bound f,g,h}, we obtain
    \begin{align}
        &\sum_{a = \lceil N^\delta \rceil}^{\lfloor N^\kappa \rfloor}
        \left|
            E\!\left[ \func_N(S^1_a, a)\, \mathbf{1}_{D_R}(\bm{S}^{\dagger}_a)\, \zeta_N(a) \right]
            -
            E\!\left[ \func_N(S^1_a, a)\, \mathbf{1}_{D_R}(\bm{S}^{\dagger}_a)\, \zeta'_N(a) \right]
        \right|
        \\
        &\lesssim_f 
        \sum_{a = \lceil N^\delta \rceil}^{\lfloor N^\kappa \rfloor}
        \frac{o_{f,\delta,\varepsilon,R}(1; N \to \infty)}{(\log N)^2} 
        E\! 
        \left[
            \mathbf{1}_{D_R}(\bm{S}^{\dagger}_a) \,            
            \sum_{k=a+1}^{a + \lfloor a^{1-\varepsilon} \rfloor} 
            \mathbf{1}_D(\bm{S}_k)\,
            \mathbf{1}_{E(a)}
        \right]\\
        &\qquad
        +  
        \sum_{a = \lceil N^\delta \rceil}^{\lfloor N^\kappa \rfloor}
        \frac{1}{\log N} 
        E\!\left[ \mathbf{1}_{D_R}(\bm{S}^{\dagger}_a) \cdot\,\mathbf{1}_{E(a)^c} \right].
        \label{eq:time approx3}
    \end{align}
    By the Markov property at time $a$ and Lemma~\ref{lem:col est},
    the first term on the right-hand side is bounded by 
    \begin{align}
        &\frac{o_{f,\delta,\varepsilon,R}(1; N \to \infty)}{(\log N)^2} 
        \sum_{a = \lceil N^\delta \rceil}^{\lfloor N^\kappa \rfloor}
        P\bigl(S^1_a = S^2_a\bigr)
        \sup_{x \in \ZN^2}
        E_{(x,x)}\!
        \left[   
            \sum_{k=1}^{\lfloor a^{1-\varepsilon} \rfloor} 
            \mathbf{1}_D(\bm{S}_k)
        \right]\\
        &\lesssim
        \frac{o_{f,\delta,\varepsilon,R}(1; N \to \infty)}{\log N} 
        \sum_{a = \lceil N^\delta \rceil}^{\lfloor N^\kappa \rfloor}
        P\bigl(S^1_a = S^2_a\bigr)\\
        &=
        o_{f,\delta,\varepsilon,R}(1; N \to \infty).
         \label{eq:time approx4}
    \end{align}
    Similarly, by the Markov property at time $a$, \eqref{eq:time approx2}, and Lemma~\ref{lem:col est},
    the second term of \eqref{eq:time approx3} is bounded by 
    \begin{equation}
        \sum_{a = \lceil N^\delta \rceil}^{\lfloor N^\kappa \rfloor}
        \frac{1}{\log N} 
        P\bigl(S^1_a = S^2_a\bigr)
        \sup_{x \in \ZN^2}
        P_{(x,x)}(E(a)^c)
        = o_{f,\delta,\varepsilon}(1;N\to \infty).
    \end{equation}
    Hence, by Lemmas~\ref{lem:col est} and~\ref{lem:prod diff}, 
    we conclude 
    \begin{align}
        &\left|\sum_{(a_l)_{l=1}^L \in \CutTimes_L(\varepsilon)}
            \left(
                \prod_{l = 1}^L
                E\!
                \left[ 
                    \func_N(S^1_{a_l}, a_l)\, \mathbf{1}_{D_R}(\bm{S}^{\dagger}_{a_l})\, \zeta_N(a_l) 
                \right]
                -  
                \prod_{l = 1}^L
                E\! 
                \left[
                    \func_N(S^1_{a_l}, a_l)\,
                    \mathbf{1}_{D_R}(\bm{S}^{\dagger}_{a_l})\,
                    \zeta'_N(a_l)
                \right]
            \right)
        \right|\\
        &\leq    
        \sum_{k=1}^L 
        o_{f,\delta,\varepsilon,R}(1;N\to \infty)
        \left(\frac{\|f\|_\infty}{\log N}\right)^{L-1}   
        \left( 
            \sum_{a= \lceil N^{\delta} \rceil}^{\lfloor N^\kappa \rfloor} 
            E\bigl[ \mathbf{1}_{D_R}(\bm{S}^{\dagger}_{a}) \bigr]
        \right)^{L-1}\\
        &=
        o_{f,L,\delta,\varepsilon,R}(1; N \to \infty),
    \end{align}
    which completes the proof.
\end{proof}

Below, we state a consequence of Corollary~\ref{cor:indep decomp} and Proposition~\ref{prop:time approx}.

\begin{cor} \label{cor:clust approx}
    For each $\delta \in (0,1)$,
    \begin{align}
        &\sum_{L=1}^{\infty}
        \sum_{(a_l)_{l=1}^L \in \CutTimes_L(-\varepsilon)}\!
        E\!
        \left[
            \prod_{l=1}^{L} 
            \func_N(S^1_{a_l}, a_l) 
            \mathbf{1}_D(\bm{S}_{a_l})
            h^{(\varepsilon)}_{N,a_l}(\bm{S} \circ \theta_{a_l})
        \right]\\
        &= 
        \sum_{L=1}^M
        \sum_{(a_l)_{l=1}^L \in \CutTimes_L(\varepsilon)}
        \prod_{l=1}^{L} 
        E\!
        \left[
            \func_N(S^1_{a_l}, a_l) 
            \mathbf{1}_{D_R}(\bm{S}^{\dagger}_{a_l})
            \exp 
            \biggl(
                -f_N(S^1_{a_l}, a_l)
                \sum_{k=a_l+1}^{a_l + \lfloor a_l^{1-\varepsilon} \rfloor} 
                \mathbf{1}_D(\bm{S}_k)
            \biggr)
        \right]\\[0.4em]
        &\qquad
        +
        o_{f,\delta}(1; N \to \infty\, \triangleright\, R \to \infty\, \triangleright\, \varepsilon \to 0\, \triangleright\, M \to \infty).
    \end{align}
\end{cor}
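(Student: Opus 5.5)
The corollary follows by chaining two earlier results. Start from Corollary~\ref{cor:indep decomp}, which already rewrites the full cluster sum as
\[
\sum_{L=1}^M\sum_{(a_l)_{l=1}^L \in \CutTimes_L(\varepsilon)} \prod_{l=1}^L E\bigl[\func_N(S^1_{a_l},a_l)\mathbf{1}_{D_R}(\bm{S}^\dagger_{a_l}) h^{(\varepsilon)}_{N,a_l}(\bm{S}\circ\theta_{a_l})\bigr]
\]
up to an error that is $o_{f,\delta}(1; N\to\infty \triangleright R\to\infty \triangleright \varepsilon\to 0 \triangleright M\to\infty)$. By definition of the shift operator, $h^{(\varepsilon)}_{N,a}(\bm{S}\circ\theta_a)=\exp\bigl(-\sum_{k=a+1}^{a+\lfloor a^{1-\varepsilon}\rfloor} f_N(S^1_k,k)\mathbf{1}_D(\bm{S}_k)\bigr)$. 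Hence each summand with fixed $L$ is exactly the left-hand side of Proposition~\ref{prop:time approx}.

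Next, apply Proposition~\ref{prop:time approx} separately for each $L\in\{1,\dots,M\}$. Each application incurs an error $o_{f,L,\delta,\varepsilon,R}(1;N\to\infty)$. Since $M$ is fixed when $N\to\infty$, the finite sum of these errors over $L\le M$ is $o_{f,M,\delta,\varepsilon,R}(1;N\to\infty)$. This is the innermost limit in the stated iterated order. Sending $N\to\infty$ first kills it, so it is in particular $o_{f,\delta}(1;N\to\infty\triangleright R\to\infty\triangleright\varepsilon\to0\triangleright M\to\infty)$.

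The only point requiring care is the bookkeeping of this iterated-limit notation. The error from Proposition~\ref{prop:time approx} depends on $R,\varepsilon,L$ (hence on $M$). It vanishes as $N\to\infty$ with all other parameters fixed, so the subsequent $\limsup$'s in $R$, $\varepsilon$, $M$ are all of $0$. Adding it to the error from Corollary~\ref{cor:indep decomp} preserves the stated form, by subadditivity of $\limsup$ at each stage. No further estimates are needed.
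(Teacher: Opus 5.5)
Your proposal is correct and follows the paper's approach: the paper states this corollary as a direct consequence of Corollary~\ref{cor:indep decomp} combined with Proposition~\ref{prop:time approx}. Your unpacking of $h^{(\varepsilon)}_{N,a}(\bm{S}\circ\theta_a)$ is right, and so is your iterated-limit bookkeeping: the finitely many errors for $L\le M$ vanish as $N\to\infty$ with $R,\varepsilon,M$ fixed, which is exactly what the argument needs.
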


\subsection{Proof of Theorem~\ref{thm:lap conv}}

In this subsection,
we combine the preceding approximations to identify the limiting Laplace transform.
We first simplify each single-cluster contribution and then convert the sum over
cluster-starting times into an ordered time integral.

We begin by removing the annulus restrictions, which were needed for preceding approximation steps.

\begin{lem}\label{lem:remove annu}
    For each $N \geq 2$, $L \geq 1$, $\delta \in (0,1)$, and $\varepsilon \in (0,1/2)$,
    \begin{align}
        &\sum_{(a_l)_{l=1}^L \in \CutTimes_L(\varepsilon)}
        \prod_{l=1}^{L} 
        E\!
        \left[
            \func_N(S^1_{a_l}, a_l)\, 
            \mathbf{1}_{D_R}(\bm{S}^\dagger_{a_l})
            \exp 
            \biggl(
                - f_N(S^1_{a_l}, a_l)
                \sum_{k=a_l+1}^{a_l + \lfloor a_l^{1-\varepsilon} \rfloor} 
                \mathbf{1}_D(\bm{S}_k)
            \biggr)
        \right]
        \\
        &=    
        \sum_{(a_l)_{l=1}^L \in \CutTimes_L(\varepsilon)}
        \prod_{l=1}^{L} 
        E\!
        \left[
            \func_N(S^1_{a_l}, a_l)\, 
            \mathbf{1}_D(\bm{S}_{a_l})
            \exp 
            \biggl(
                - f_N(S^1_{a_l}, a_l)
                \sum_{k=a_l+1}^{a_l + \lfloor a_l^{1-\varepsilon} \rfloor} 
                \mathbf{1}_D(\bm{S}_k)
            \biggr)
        \right]
        \\ 
        &\qquad 
        + o_{f,L}(1; N \to \infty\, \triangleright\, R \to \infty).
    \end{align}
\end{lem}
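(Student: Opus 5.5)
The plan is a telescoping argument, in the same spirit as the proof of Proposition~\ref{prop:annulus}, but easier because the products are now products of expectations rather than expectations of products. For each $l$ and $(a_l)$, write
\begin{equation}
    X_l \coloneqq E\!\left[\func_N(S^1_{a_l}, a_l)\,\mathbf{1}_{D_R}(\bm{S}^\dagger_{a_l})\,\zeta'_N(a_l)\right],
    \qquad
    Y_l \coloneqq E\!\left[\func_N(S^1_{a_l}, a_l)\,\mathbf{1}_{D}(\bm{S}_{a_l})\,\zeta'_N(a_l)\right],
\end{equation}
where $\zeta'_N(a)$ is as in the proof of Proposition~\ref{prop:time approx}. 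By Lemma~\ref{lem:prod diff}, $\prod_l X_l-\prod_l Y_l=\sum_{k=1}^L (X_k-Y_k)\prod_{l<k}X_l\prod_{l>k}Y_l$.

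By \eqref{eq:bound f,g,h}, $0\le \zeta'_N\le 1$, and Lemma~\ref{lem:col est}, we have $|X_l|\vee|Y_l|\le \|f\|_\infty(\log N)^{-1}P(S^1_{a_l}=S^2_{a_l})\lesssim_f (\log N)^{-1}a_l^{-1}$. Since $\mathbf 1_{D_R}(\bm S^\dagger_a)\le \mathbf 1_D(\bm S_a)$, we also get
\begin{equation}
    |X_k-Y_k|\le \frac{\|f\|_\infty}{\log N}\,P\bigl(\bm S_{a_k}\in D,\ \bm S^\dagger_{a_k}\notin D_R\bigr).
\end{equation}

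Next, enlarge the sum over $\CutTimes_L(\varepsilon)$ to the sum over all $(a_l)\in[1,N^\kappa]^L$, ignoring the ordering. This is legitimate because all terms are now bounded in absolute value. The sum then factorizes. Each of the $L-1$ factors with $l\neq k$ contributes $(\log N)^{-1}\sum_{a\le N^\kappa}a^{-1}\lesssim_\kappa 1$. The $k$-th factor contributes
\begin{equation}
    (\log N)^{-1}\sum_{a\le N^\kappa}P\bigl(\bm S_a\in D,\ \bm S^\dagger_a\notin D_R\bigr),
\end{equation}
which is $\lesssim_{\kappa}(\log N)^{-1}+R^{-2}$ by Lemma~\ref{lem:annu neg} with $L=1$. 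Here $\kappa=\kappa(f)$, so these constants depend only on $f$. Summing over $k\le L$ gives a bound of order $O_{f,L}\bigl((\log N)^{-1}+R^{-2}\bigr)$, which is $o_{f,L}(1;N\to\infty\triangleright R\to\infty)$. This is the claimed error.

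There is no real obstacle here. The only step needing care is the justification for dropping the ordering and gap constraints of $\CutTimes_L(\varepsilon)$, which holds because absolute values are taken before enlarging the index set. I would also note explicitly that Lemma~\ref{lem:annu neg} applies with $L=1$ and $t_1=a$.
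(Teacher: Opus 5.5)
Your proposal is correct and follows essentially the same route as the paper's proof. The paper also bounds each single-cluster difference by $\frac{\|f\|_\infty}{\log N}P(\bm{S}_a\in D,\ \bm{S}^\dagger_a\notin D_R)$ and each factor by $\frac{\|f\|_\infty}{\log N}P(\bm{S}_a\in D)$, telescopes with Lemma~\ref{lem:prod diff}, bounds the resulting sum by a product of unrestricted single sums, and concludes with Lemma~\ref{lem:annu neg} at $L=1$ together with Lemma~\ref{lem:col est}.
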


\begin{proof}
    For $a \in [N^\delta,N^\kappa]\cap\ZN$, 
    let $A_{N,R}(a)$ and $A_N(a)$ denote the expectations 
    appearing in the first and second sums, respectively.  
    Since the exponential factor is bounded by $1$,
    using \eqref{eq:bound f,g,h},
    we obtain 
    \begin{align}
        |A_{N,R}(a)-A_N(a)|
        &\leq
        \frac{\|f\|_\infty}{\log N}
        P\!\left(\bm{S}_a\in D,\, \bm{S}_a^\dagger\notin D_R\right),
        \label{eq:final annulus one factor}
        \\[0.2em]
        |A_{N,R}(a)|\vee|A_N(a)|
        &\leq
        \frac{\|f\|_\infty}{\log N}P(\bm{S}_a\in D).
    \end{align}
    Combining this with Lemma~\ref{lem:prod diff},
    we deduce that
    \begin{align}
        &\left|
            \sum_{(a_l)_{l=1}^L \in \CutTimes_L(\varepsilon)}
            \prod_{l=1}^L A_{N,R}(a_l) 
            - 
            \sum_{(a_l)_{l=1}^L \in \CutTimes_L(\varepsilon)}
            \prod_{l=1}^L A_N(a_l)
        \right|\\
        &\lesssim_{f,L}
        \frac{1}{(\log N)^L}
       \left( \sum_{a = 1}^{\lfloor N^\kappa \rfloor}
        P\!\left( \bm{S}_a\in D,\, \bm{S}_a^\dagger\notin D_R \right) \right) 
        \left( \sum_{a = 1}^{\lfloor N^\kappa \rfloor} P(\bm{S}_a\in D) \right)^{L-1}\\
        &\lesssim_{f,L}
        \frac{1}{\log N}+\frac{1}{R^2},
    \end{align}
    where we used Lemma~\ref{lem:annu neg} with $L=1$ and Lemma~\ref{lem:col est}
    to obtain the last inequality.
    This completes the proof.
\end{proof}

The exponential factor in each cluster records the collisions following its starting time.
We next replace its Laplace transform by the limiting function obtained in Corollary~\ref{cor:col num conv}. 
Recalling the constant $\tilde{C}$ from \eqref{eq:Charact},  
set
\begin{equation}
    \psi(\lambda, t) 
    \coloneqq 
    \frac{\Charact}{\lambda\, t + \Charact},
    \qquad 
    \lambda, t \geq 0.
\end{equation}

\begin{lem}\label{lem:clust Lap approx}
    For each $L \geq 1$, $\delta \in (0,1)$, and $\varepsilon \in (0,1/2)$,
    \begin{align}
        &\sum_{(a_l)_{l=1}^L \in \CutTimes_L(\varepsilon)}
        \prod_{l=1}^{L} 
        E\!
        \left[
            \func_N(S^1_{a_l}, a_l)\, 
            \mathbf{1}_D(\bm{S}_{a_l})
            \exp 
            \biggl(
                - f_N(S^1_{a_l}, a_l)
                \sum_{k=a_l+1}^{a_l + \lfloor a_l^{1-\varepsilon} \rfloor} 
                \mathbf{1}_D(\bm{S}_k)
            \biggr)
        \right]
        \\
        &=    
        \sum_{(a_l)_{l=1}^L \in \CutTimes_L(\varepsilon)}
        \prod_{l=1}^{L} 
        E\!
        \left[
            \func_N(S^1_{a_l}, a_l)\,
            \mathbf{1}_D(\bm{S}_{a_l})\,
            \psi
            \!\left(
                f\bigl( \phi_N(S^1_{a_l}, a_l) \bigr), 
                \frac{(1-\varepsilon)\log a_l}{\log N}
            \right)
        \right]
        \\ 
        &\qquad 
        + o_{f,L, \delta, \varepsilon}(1; N \to \infty).
    \end{align}
\end{lem}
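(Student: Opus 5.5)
The plan is to work one factor at a time, using the Markov property at time $a_l$, and then to assemble the $L$ factors with Lemma~\ref{lem:prod diff}.

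For each $a\in[N^\delta,N^\kappa]\cap\ZN$, condition on $\bm{S}_a=(x,x)$. By translation invariance the inner exponential becomes
\begin{equation}
    \Phi_a(\lambda)\coloneqq E\!\left[\exp\!\Bigl(-\frac{\lambda}{\log N}\sum_{k=1}^{\lfloor a^{1-\varepsilon}\rfloor}\mathbf 1_{\{S^1_k=S^2_k\}}\Bigr)\right],
    \qquad \lambda=f(\phi_N(x,a))\in[0,\|f\|_\infty].
\end{equation}
This does not depend on $x$. Hence the single-cluster expectation equals
\begin{equation}
    E\!\left[\func_N(S^1_a,a)\mathbf 1_D(\bm{S}_a)\,\Phi_a\bigl(f(\phi_N(S^1_a,a))\bigr)\right].
\end{equation}

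The key step is the uniform estimate
\begin{equation}
    \sup_{a\in[N^\delta,N^\kappa]}\ \sup_{\lambda\in[0,\|f\|_\infty]}\Bigl|\Phi_a(\lambda)-\psi\Bigl(\lambda,\tfrac{(1-\varepsilon)\log a}{\log N}\Bigr)\Bigr| = o_{f,\delta,\varepsilon}(1;N\to\infty).
\end{equation}
I would derive it from Theorem~\ref{thm:col num conv} rather than from Corollary~\ref{cor:col num conv} directly. Set $T=\lfloor a^{1-\varepsilon}\rfloor$ and $\lambda'=\lambda\log T/\log N$. Then
\begin{equation}
    \Phi_a(\lambda)=E\Bigl[\exp\Bigl(-\tfrac{\lambda'}{\log T}\textstyle\sum_{n\le T}\mathbf 1_{\{S^1_n=S^2_n\}}\Bigr)\Bigr].
\end{equation}
Since $a\ge N^\delta$, we have $T\to\infty$ uniformly in $a$. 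Moreover $\lambda'$ ranges over the compact set $[0,\kappa\|f\|_\infty]$. Theorem~\ref{thm:col num conv} therefore gives $\Phi_a(\lambda)=\Charact/(\lambda'+\Charact)+o(1)$, uniformly. Next, $\log T/\log N$ differs from $(1-\varepsilon)\log a/\log N$ by $O(1/(a^{1-\varepsilon}\log N))$, and $\psi$ is Lipschitz in $t$ on bounded $\lambda$. This gives the displayed estimate. Equivalently, one could apply Corollary~\ref{cor:col num conv} with $t=(1-\varepsilon)\log a/\log N\in[(1-\varepsilon)\delta,\kappa]$, a compact subset of $(0,\infty)$, handling the floor in the same way. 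Either route relies on the uniformity statement, and that is the only delicate point.

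Write $A_N(a)$ and $B_N(a)$ for the two single-cluster expectations. Since $|\func_N|\le\|f\|_\infty/\log N$, we get
\begin{equation}
    |A_N(a)-B_N(a)|\le \frac{\|f\|_\infty}{\log N}\,P(\bm S_a\in D)\,o(1),
    \qquad
    |A_N(a)|\vee|B_N(a)|\le \frac{\|f\|_\infty}{\log N}P(\bm S_a\in D),
\end{equation}
where we use $0\le\psi\le1$. By Lemma~\ref{lem:prod diff}, the difference of the two $L$-fold sums over $\CutTimes_L(\varepsilon)$ is bounded by
\begin{equation}
    L\cdot o(1)\,\|f\|_\infty^L\Bigl((\log N)^{-1}\textstyle\sum_{a\le N^\kappa}P(\bm S_a\in D)\Bigr)^L.
\end{equation}
Here we enlarge $\CutTimes_L(\varepsilon)$ to all $L$-tuples in $[1,N^\kappa]^L$ and use absolute values. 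By Lemma~\ref{lem:col est} the bracket is $O_\kappa(1)$, so the whole bound is $o_{f,L,\delta,\varepsilon}(1;N\to\infty)$, which proves the lemma.
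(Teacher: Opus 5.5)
Your proposal is correct and follows the paper's proof essentially step for step. The steps are the same: the Markov property at time $a_l$ plus translation invariance of $D$ reduces each cluster factor to the Laplace transform of the collision count up to $\lfloor a_l^{1-\varepsilon}\rfloor$; this is replaced uniformly by $\psi$; and the $L$ factors are reassembled with Lemma~\ref{lem:prod diff} and Lemma~\ref{lem:col est}. The only cosmetic difference is in the uniform replacement step. You invoke Theorem~\ref{thm:col num conv} with the rescaled parameter $\lambda'=\lambda\log T/\log N$ and then use Lipschitz continuity of $\psi$ in $t$. The paper instead applies Corollary~\ref{cor:col num conv} at $t=\log\lfloor a^{1-\varepsilon}\rfloor/\log N\in[\delta/2,\kappa]$ together with uniform continuity of $\psi$, which, as you note yourself, amounts to the same thing.
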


\begin{proof}
    Set
    \begin{equation}
        \psi_N(\lambda, t) 
        \coloneqq 
        E\!
        \left[
            \exp\!\left( -\frac{\lambda}{\log N} \sum_{n=1}^{\lfloor N^t \rfloor} \mathbf{1}_D(\bm{S}_n) \right)
        \right],
        \qquad 
        \lambda \geq 0,\, t  > 0.
    \end{equation}
    By the Markov property at time $a=a_l$ and translation invariance, 
    the expectation on the left-hand side in the claim equals
    \begin{equation}
        E\!\left[
            \func_N(S^1_a,a)\mathbf{1}_D(\bm{S}_a)
            \psi_N\!\left( 
                f\bigl( \phi_N(S^1_a, a) \bigr),
                \frac{\log\lfloor a^{1-\varepsilon}\rfloor}{\log N} 
            \right)
        \right].
        \label{eq:clust Lap approx1}
    \end{equation}
    For all sufficiently large $N$, 
    uniformly in $a \in [N^\delta,N^\kappa] \cap \ZN$, 
    we have
    \begin{equation}
        \frac{\delta}{2} 
        \leq    
        \frac{\log\lfloor a^{1-\varepsilon}\rfloor}{\log N}
        \leq    
        \kappa,
        \qquad
        \left|
            \frac{\log\lfloor a^{1-\varepsilon}\rfloor}{\log N}
            -
            \frac{(1-\varepsilon)\log a}{\log N}
        \right|=o_\delta(1;N\to\infty).
    \end{equation}
    Therefore, 
    Corollary~\ref{cor:col num conv} and 
    the uniform continuity of $\psi$ on $[0,\|f\|_\infty]\times[\delta/2,\kappa]$ yield
    \begin{align}
        \eta_N
        \coloneqq
        \sup_{\substack{N^\delta\leq a\leq N^\kappa\\
                        0\leq\lambda\leq\|f\|_\infty}}
        \left|
            \psi_N\!\left( \lambda, \frac{\log\lfloor a^{1-\varepsilon}\rfloor}{\log N} \right)
            -\psi\biggl(\lambda, \frac{(1-\varepsilon)\log a}{\log N}\biggr)
        \right|
        =o_{f,\delta,\varepsilon}(1;N\to\infty).
    \end{align}
    Let $A_N(a)$ denote \eqref{eq:clust Lap approx1}, 
    and let 
    \begin{equation}
        B_N(a)
        \coloneqq 
        E\!\left[
            \func_N(S^1_a,a)\mathbf{1}_D(\bm{S}_a)
            \psi\!\left( f\bigl( \phi_N(S^1_a, a) \bigr), \frac{(1-\varepsilon)\log a}{\log N} \right)
        \right].
    \end{equation}
    Then, by \eqref{eq:bound f,g,h} and the definition of $\eta_N$,
    \begin{equation}
        |A_N(a)-B_N(a)|
        \leq
        \frac{\|f\|_\infty\eta_N}{\log N}P(\bm{S}_a\in D),
        \qquad
        |A_N(a)|\vee|B_N(a)|
        \leq
        \frac{\|f\|_\infty}{\log N}P(\bm{S}_a\in D).
    \end{equation}
    Using this and Lemma~\ref{lem:prod diff},
    we deduce that 
    \begin{align}
        &\left|
            \sum_{(a_l)_{l=1}^L \in \CutTimes_L(\varepsilon)}
            \prod_{l=1}^L A_N(a_l) 
            - 
            \sum_{(a_l)_{l=1}^L \in \CutTimes_L(\varepsilon)}
            \prod_{l=1}^L B_N(a_l)
        \right|\\
        &\lesssim_{f,L}
        \frac{\eta_N}{(\log N)^L}
        \left( \sum_{a = 1}^{\lfloor N^\kappa \rfloor} P(\bm{S}_a\in D) \right)^L\\
        &\lesssim_{f,L}
        \eta_N,
    \end{align}
    where we used Lemma~\ref{lem:col est} to obtain the last inequality.
    This completes the proof.
\end{proof}

Using the approximation $e^{-u} - 1 \approx -u$ for small $u \geq 0$,
we expect
\begin{equation}
    \func_N(S^1_{a_l}, a_l)
    = 
    e^{-f_N(S^1_{a_l}, a_l)} - 1
    \approx
    -f_N(S^1_{a_l}, a_l).
\end{equation}
In the following lemma,
we justify this approximation.

\begin{lem}\label{lem:linearize}
    For each $L \geq 1$, $\delta \in (0,1)$, and $\varepsilon \in (0,1/2)$,
    \begin{align}
        &\sum_{(a_l)_{l=1}^L \in \CutTimes_L(\varepsilon)}
        \prod_{l=1}^{L} 
        E\!
        \left[
            \func_N(S^1_{a_l}, a_l)\,
            \mathbf{1}_D(\bm{S}_{a_l})\,
            \psi
            \!\left(
                f\bigl( \phi_N(S^1_{a_l}, a_l) \bigr), 
                \frac{(1-\varepsilon)\log a_l}{\log N}
            \right)
        \right]\\ 
        &=
        \sum_{(a_l)_{l=1}^L \in \CutTimes_L(\varepsilon)}
        \prod_{l=1}^{L} 
        E\!
        \left[
            -f_N(S^1_{a_l}, a_l)\,
            \mathbf{1}_D(\bm{S}_{a_l})\,
            \psi
            \!\left(
                f\bigl( \phi_N(S^1_{a_l}, a_l) \bigr), 
                \frac{(1-\varepsilon)\log a_l}{\log N}
            \right)
        \right]\\
        &\qquad
        +
        o_{f,L}(1; N \to \infty).
    \end{align}
\end{lem}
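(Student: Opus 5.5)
The plan is the same as in the proofs of Lemmas~\ref{lem:remove annu} and~\ref{lem:clust Lap approx}: compare the two products factor by factor using Lemma~\ref{lem:prod diff}, and then control the sum over cluster-starting times with Lemma~\ref{lem:col est}.

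For $a \in [N^\delta,N^\kappa]\cap\ZN$, let $A_N(a)$ be the expectation in the left-hand sum and $B_N(a)$ the expectation in the right-hand sum. The elementary inequality $0 \leq e^{-u}-1+u \leq u^2/2$ for $u \geq 0$ applies with $u=f_N(S^1_a,a)$. Since $\|f_N\|_\infty \leq \|f\|_\infty/\log N$, this gives
\begin{equation}
    \bigl|\func_N(x,a)+f_N(x,a)\bigr| \leq \frac{\|f\|_\infty^2}{2(\log N)^2}.
\end{equation}
The values of $\psi$ lie in $[0,1]$ because $\lambda,t \geq 0$. Hence
\begin{equation}
    |A_N(a)-B_N(a)| \leq \frac{\|f\|_\infty^2}{(\log N)^2}P(\bm{S}_a\in D),
    \qquad
    |A_N(a)|\vee|B_N(a)| \leq \frac{\|f\|_\infty}{\log N}P(\bm{S}_a\in D),
\end{equation}
where the second bound uses \eqref{eq:bound f,g,h}.

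Next, apply Lemma~\ref{lem:prod diff} to $\prod_l A_N(a_l)-\prod_l B_N(a_l)$. This yields $L$ terms. In each term, one factor is a difference and carries the extra factor $\|f\|_\infty/\log N$. The other $L-1$ factors are each bounded by $\|f\|_\infty(\log N)^{-1}P(\bm{S}_{a}\in D)$. Enlarge $\CutTimes_L(\varepsilon)$ to the product set $([N^\delta,N^\kappa]\cap\ZN)^L$, which is allowed since every summand is nonnegative after taking absolute values. The total error is then at most
\begin{equation}
    L\,\frac{\|f\|_\infty^{L+1}}{(\log N)^{L+1}}
    \left(\sum_{a=1}^{\lfloor N^\kappa\rfloor}P(\bm{S}_a\in D)\right)^{L}.
\end{equation}
Lemma~\ref{lem:col est} with $m=1$ gives $\sum_{a\leq N^\kappa}P(\bm{S}_a\in D)\lesssim \kappa\log N$. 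The error is therefore $\lesssim_{f,L}(\log N)^{-1}$, which is $o_{f,L}(1;N\to\infty)$ as claimed. The constant depends on $\kappa=\kappa(f)$, so the subscript $f$ covers it.

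No step here is a real obstacle. The only point that needs care is that the linearization error is second order, of size $(\log N)^{-2}$, whereas the sum over $a$ costs only one factor of $\log N$. This leaves a net gain of $(\log N)^{-1}$, and that gain is uniform in $\delta$ and $\varepsilon$.
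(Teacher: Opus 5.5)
Your proposal is correct and matches the paper's proof essentially step for step. The paper also uses $|(e^{-u}-1)+u|\le u^2/2$ to get $|A_N(a)-B_N(a)|\lesssim_f (\log N)^{-2}P(\bm{S}_a\in D)$ and $|A_N(a)|\vee|B_N(a)|\lesssim_f (\log N)^{-1}P(\bm{S}_a\in D)$, and then combines Lemma~\ref{lem:prod diff} with Lemma~\ref{lem:col est} (applied up to $\lfloor N^\kappa\rfloor$) to obtain an error of order $(\log N)^{-1}$, uniform in $\delta$ and $\varepsilon$.
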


\begin{proof}
    We use the following elementary bound: 
    \begin{equation}
        |(e^{-u} - 1) - (-u)| \leq \frac{u^2}{2}, \quad \forall u \geq 0.
    \end{equation}
    This, combined with \eqref{eq:bound f,g,h}, yields 
    \begin{equation}
        \sup_{x \in \ZN^2,\, a \in \NN}
        \left|
            \func_N(x,a) - (-f_N(x,a)) 
        \right|
        \leq
        \frac{\|f_N\|_\infty^2}{2}
        \leq
        \frac{\|f\|_\infty^2}{2(\log N)^2}.
        \label{eq:linearize1}
    \end{equation}
    For $a \in [N^\delta,N^\kappa]\cap\ZN$, 
    let $A_N(a)$ and $B_N(a)$ denote the expectations 
    appearing in the first and second sums in the claim, respectively.  
    Then, we deduce from \eqref{eq:bound f,g,h}, \eqref{eq:linearize1} and $\|\psi\|_\infty \leq 1$ that 
    \begin{equation}
        |A_N(a)-B_N(a)|
        \leq
        \frac{\|f\|_\infty^2}{(\log N)^2}P(\bm{S}_a\in D),
        \qquad
        |A_N(a)|\vee|B_N(a)|
        \leq
        \frac{\|f\|_\infty}{\log N}P(\bm{S}_a\in D).
    \end{equation}
    Therefore,
    analogously to the proof of Lemma~\ref{lem:clust Lap approx},
    we obtain
    \begin{align}
        &\left|
            \sum_{(a_l)_{l=1}^L \in \CutTimes_L(\varepsilon)}
            \prod_{l=1}^L A_N(a_l) 
            - 
            \sum_{(a_l)_{l=1}^L \in \CutTimes_L(\varepsilon)}
            \prod_{l=1}^L B_N(a_l)
        \right|\\
        &\lesssim_{f,L}
        \frac{1}{(\log N)^{L+1}}
        \left( \sum_{a = 1}^{\lfloor N^\kappa \rfloor} P(\bm{S}_a\in D) \right)^L\\
        &\lesssim_{f,L}
        \frac{1}{\log N},
    \end{align}
    which implies the desired result.
\end{proof}

It remains to identify the limiting contribution of a collision
at a cluster-starting time.
The next lemma uses Corollary~\ref{cor:sum to int}
to replace the conditional distribution of the rescaled collision position
at each cluster-starting time by its limiting law $\nu_{\Gamma/2}$.
Set, for each $\varepsilon \in [0,1]$, $t \geq 0$, and $x \in \RN^2$,
\begin{equation}
    F_\varepsilon(x, t)  
    \coloneqq 
    -f(x, t)\, 
    \psi\bigl( f(x, t), (1-\varepsilon)t  \bigr)
    = 
    -\frac{\Charact f(x,t)}{(1-\varepsilon)t f(x,t) + \Charact}.
\end{equation}

\begin{lem} \label{lem:circle int}
    For each $L \geq 1$, $\delta \in (0,1)$, and $\varepsilon \in (0,1/2)$,
    \begin{align}
        &\sum_{(a_l)_{l=1}^L \in \CutTimes_L(\varepsilon)}
        \prod_{l=1}^{L} 
        E\!
        \left[
            -f_N(S^1_{a_l}, a_l)\,
            \mathbf{1}_D(\bm{S}_{a_l})\,
            \psi
            \!\left(
                f\bigl( \phi_N(S^1_{a_l}, a_l) \bigr), 
                \frac{(1-\varepsilon)\log a_l}{\log N}
            \right)
        \right]
        \\
        &=    
        \left(\frac{c_*}{2}\right)^L
        \sum_{(a_l)_{l=1}^L \in \CutTimes_L(\varepsilon)}
        \prod_{l=1}^{L} 
        \left\{
            \frac{1}{a_l \log N}
            \int_{\RN^2}
            F_\varepsilon\left( x, \frac{\log a_l}{\log N} \right) 
            \nu_{\Gamma/2}(dx)
        \right\} 
        + o_{f,L,\delta,\varepsilon}(1;N\to \infty).
    \end{align}
\end{lem}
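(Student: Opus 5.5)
The plan is to treat each factor separately and then recombine via Lemma~\ref{lem:prod diff}, exactly as in Lemmas~\ref{lem:clust Lap approx} and~\ref{lem:linearize}. For $a\in[N^\delta,N^\kappa]\cap\ZN$, let $A_N(a)$ denote the expectation in the left-hand side, and set
\begin{equation}
    B_N(a)\coloneqq \frac{c_*}{2}\,\frac{1}{a\log N}\int_{\RN^2}F_\varepsilon\Bigl(x,\frac{\log a}{\log N}\Bigr)\nu_{\Gamma/2}(dx).
\end{equation}
Since $f_N=(\log N)^{-1}f\circ\phi_N$ and $\phi_N(S^1_a,a)=(S^1_a/\sqrt a,\log a/\log N)$, we can rewrite
\begin{equation}
    A_N(a)=\frac{1}{\log N}E\Bigl[G_{a}\bigl(S^1_a/\sqrt a\bigr)\mathbf 1_{\{S^1_a=S^2_a\}}\Bigr],
    \qquad G_a(x)\coloneqq F_\varepsilon\Bigl(x,\frac{\log a}{\log N}\Bigr),
\end{equation}
because $-f(x,t)\psi(f(x,t),(1-\varepsilon)t)=F_\varepsilon(x,t)$ with $t=\log a/\log N$.

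Next we apply Corollary~\ref{cor:sum to int} to the family $\{G_{a,N}\}$, indexed by $\alpha=(a,N)$. This family is uniformly bounded by $\|f\|_\infty$. It is also equicontinuous in $x$, uniformly in the time parameter $t\in[0,\kappa]$. To see this, note that $F_\varepsilon(x,t)=-\Charact f/((1-\varepsilon)tf+\Charact)$ is a smooth function of $(f,t)$ on the compact set $[0,\|f\|_\infty]\times[0,\kappa]$, with bounded derivative in the $f$ variable. Hence uniform continuity of $f$ gives a modulus of continuity for $x\mapsto F_\varepsilon(x,t)$ that does not depend on $t$. The corollary then yields
\begin{equation}
    \sup_{N}\ \sup_{a\ge n}\ a\,\log N\,\bigl|A_N(a)-B_N(a)\bigr|\longrightarrow 0 \quad (n\to\infty).
\end{equation}
Since $a\ge N^\delta\to\infty$, we get $|A_N(a)-B_N(a)|\le \eta_N/(a\log N)$ uniformly over admissible $a$, with $\eta_N=o_{f,\delta,\varepsilon}(1;N\to\infty)$. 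We also have the trivial bounds $|A_N(a)|\lesssim_f P(\bm S_a\in D)/\log N\lesssim 1/(a\log N)$ (Lemma~\ref{lem:col est}) and $|B_N(a)|\lesssim_f 1/(a\log N)$.

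Finally, telescope with Lemma~\ref{lem:prod diff}. Bound each summand by $\eta_N\,(\log N)^{-L}\prod_l a_l^{-1}$, up to constants depending on $f$ and $L$. Enlarge the sum over $\CutTimes_L(\varepsilon)$ to all of $[1,N^\kappa]^L$. Then
\begin{equation}
    \Bigl|\sum_{\CutTimes_L(\varepsilon)}\prod_l A_N(a_l)-\sum_{\CutTimes_L(\varepsilon)}\prod_l B_N(a_l)\Bigr|\lesssim_{f,L}\eta_N\Bigl(\frac{1}{\log N}\sum_{a\le N^\kappa}\frac1a\Bigr)^L\lesssim_{f,L,\kappa}\eta_N,
\end{equation}
which vanishes as $N\to\infty$. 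Pulling out $(c_*/2)^L$ gives the claim.

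The only real point of care is the equicontinuity needed to invoke Corollary~\ref{cor:sum to int} uniformly over the time parameter. I expect this to be straightforward via the explicit rational form of $F_\varepsilon$ and the uniform continuity of $f$.
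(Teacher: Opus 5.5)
Your proposal is correct and follows essentially the same route as the paper. Like the paper, you rewrite each factor as $(\log N)^{-1}E[F_\varepsilon(S^1_a/\sqrt{a},\log a/\log N)\mathbf{1}_D(\bm{S}_a)]$ and get equicontinuity from the fact that $\lambda\mapsto\lambda\psi(\lambda,t)$ is $1$-Lipschitz (your bounded $\lambda$-derivative is in fact at most $1$ for every $t\geq 0$). You then apply Corollary~\ref{cor:sum to int} uniformly in $a\in[N^\delta,N^\kappa]$ and telescope with Lemma~\ref{lem:prod diff} as in Lemma~\ref{lem:linearize}.
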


\begin{proof}
    The definition of $F_\varepsilon$ yields
    \begin{equation}
        -f_N(S^1_a, a)\,
        \psi
        \!\left(
            f\bigl( \phi_N(S^1_{a}, a) \bigr), 
            \frac{(1-\varepsilon)\log a}{\log N}
        \right)
        = 
        \frac{1}{\log N}
        F_\varepsilon\!
        \left(
            \frac{S^1_a}{\sqrt{a}}, \frac{\log a}{\log N}
        \right).
    \end{equation}
    Hence, the expectation on the left-hand side of the claim equals
    \begin{equation}
        \frac{1}{\log N}
        E\!\left[
            F_\varepsilon\!
            \left(
                \frac{S^1_{a_l}}{\sqrt{a_l}}, \frac{\log a_l}{\log N}
            \right)
            \mathbf{1}_D(\bm{S}_{a_l})
        \right].
    \end{equation}
    Observe that
    \begin{equation}
        |\lambda \psi(\lambda,t) - \lambda'\psi(\lambda',t)| \leq |\lambda-\lambda'|,
        \qquad 
        \forall \lambda,\lambda',t \geq 0.
    \end{equation}
    This yields 
    \begin{equation}
        \left|
            F_\varepsilon(x, t) - F_\varepsilon(x',t)
        \right|
        \leq    
        \bigl| f(x, t) - f(x', t) \bigr|,
        \qquad 
        \forall x, x' \in \RN^2,\, t \geq 0.
    \end{equation}
    Since $f$ is uniformly continuous and $\log a/\log N\leq\kappa$,
    it follows that the family
    \begin{equation}
        \left\{F_\varepsilon\!\left( \cdot, \frac{\log a}{\log N}\right)
        \, \middle |\, 
        N \geq 2,\ a \in [N^\delta,N^\kappa]\cap \ZN \right\}
    \end{equation}
    is uniformly bounded and equicontinuous on $\RN^2$.
    Hence, 
    it follows from Corollary~\ref{cor:sum to int}
    that, uniformly in $a \in [N^\delta,N^\kappa]\cap \ZN$,
    \begin{align}
        &\frac{1}{\log N}
        E\!\left[
            F_\varepsilon\!
            \left(
                \frac{S^1_a}{\sqrt{a}}, \frac{\log a}{\log N}
            \right)
            \mathbf{1}_D(\bm{S}_a)
        \right]\\
        &=
        \frac{c_*}{2a \log N}
        \int_{\RN^2}
        F_\varepsilon\!
        \left(
            x, \frac{\log a}{\log N}
        \right)
        \nu_{\Gamma/2}(dx)
        +\frac{o_{f,\delta}(1;N\to\infty)}{a\log N}.
    \end{align}
    Now, by the same argument as in the proof of Lemma~\ref{lem:linearize},
    we verify the desired result.
\end{proof}

We convert the sum over well-separated cluster-starting times
into an ordered time integral.
To this end,
we use a basic result on Riemann-sum approximations,
stated in Appendix~\ref{sec:Riemann sum}. 

\begin{lem} \label{lem:clust to int}
    For each $L \geq 1$, $\delta \in (0,1)$, and $\varepsilon \in (0,1/2)$,
    \begin{align}
        &\left(\frac{c_*}{2}\right)^L
        \sum_{(a_l)_{l=1}^L \in \CutTimes_L(\varepsilon)}
        \prod_{l=1}^{L} 
        \left\{
            \frac{1}{a_l \log N}
            \int_{\RN^2}
            F_\varepsilon\left( x, \frac{\log a_l}{\log N} \right) 
            \nu_{\Gamma/2}(dx)
        \right\}\\
        &=
        \frac{1}{L!}
        \left(
            \frac{c_*}{2}
            \int_\delta^\infty
            \int_{\RN^2}
            F_\varepsilon(x,t)\, \nu_{\Gamma/2}(dx)\, dt
        \right)^L
        +
        o_{f,\delta,L}(1; N \to \infty\, \triangleright\, \varepsilon \to 0).
    \end{align}
\end{lem}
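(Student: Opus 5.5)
Set $G_\varepsilon(t) \coloneqq \frac{c_*}{2}\int_{\RN^2} F_\varepsilon(x,t)\,\nu_{\Gamma/2}(dx)$. By Assumption~\ref{assum:test}, $G_\varepsilon$ is bounded by $\frac{c_*}{2}\|f\|_\infty$, vanishes for $t>\kappa-1$, and is continuous in $t$. Continuity follows from uniform continuity of $f$ together with the Lipschitz bound $|\lambda\psi(\lambda,t)-\lambda'\psi(\lambda',t)|\le|\lambda-\lambda'|$ and continuity of $\psi$ in $t$. The left-hand side is then
\begin{equation}
    \Sigma_N \coloneqq \sum_{(a_l)_{l=1}^L\in\CutTimes_L(\varepsilon)} \prod_{l=1}^L \frac{G_\varepsilon(\log a_l/\log N)}{a_l\log N}.
\end{equation}

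The plan has two steps: first compare with the unconstrained ordered sum, then evaluate that sum as a Riemann sum. Define
\begin{equation}
    \Sigma'_N \coloneqq \sum_{\lceil N^\delta\rceil\le a_1<\cdots<a_L\le N^\kappa}\prod_{l=1}^L \frac{G_\varepsilon(\log a_l/\log N)}{a_l\log N}.
\end{equation}
The difference $\Sigma'_N-\Sigma_N$ consists of tuples with $a_{i-1}<a_i\le a_{i-1}+\lfloor a_{i-1}^{1+\varepsilon}\rfloor$ for some $i$. On such a tuple $\log a_i\le \log a_{i-1}+(1+\varepsilon)\log a_{i-1}+O(1)$, which does not give a short window in the logarithmic variable. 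So the gap constraint is not negligible; it genuinely shifts the variable by a factor. The comparison must therefore be made carefully.

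In the variables $t_l=\log a_l/\log N$, the condition $a_l>a_{l-1}+a_{l-1}^{1+\varepsilon}$ becomes asymptotically $t_l>(1+\varepsilon)t_{l-1}$. Hence I expect
\begin{equation}
    \Sigma_N\to \int_{\delta\le t_1,\ t_l>(1+\varepsilon)t_{l-1}} \prod_{l=1}^L G_\varepsilon(t_l)\,dt_l .
\end{equation}
To justify this, I will apply the logarithmic Riemann-sum lemma of Appendix~\ref{sec:Riemann sum}. Its content is that $\sum_a \frac{1}{a\log N}g(\log a/\log N)\to\int g$ for bounded, compactly supported, a.e.\ continuous $g$. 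I use it iteratively in $a_L$, then $a_{L-1}$, and so on. In each step the inner sum runs over $a_l>a_{l-1}+\lfloor a_{l-1}^{1+\varepsilon}\rfloor$, whose lower endpoint in log scale is $t_{l-1}(1+\varepsilon)+o(1)$ uniformly for $a_{l-1}\ge N^\delta$. Since the summands are bounded by $\|G\|_\infty/(a\log N)$, a boundary shift of size $o(1)$ in log scale costs $o(1)$. Uniformity of the Riemann approximation over the lower endpoint follows because $G_\varepsilon$ is continuous and the indicator $\mathbf{1}_{t>s}$ varies monotonically in $s$.

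Finally, let $\varepsilon\to0$. Then $G_\varepsilon\to G_0$ uniformly, since $|F_\varepsilon-F_0|\lesssim_f \varepsilon\kappa$. The region $\{t_l>(1+\varepsilon)t_{l-1}\}$ increases to the ordered simplex $\{\delta\le t_1<\cdots<t_L\}$ intersected with $[\delta,\kappa]^L$, and the difference set has Lebesgue measure $O_{\delta,\kappa,L}(\varepsilon)$. The integral over the ordered simplex equals $\frac{1}{L!}\bigl(\int_\delta^\infty G(t)\,dt\bigr)^L$ by symmetry. Replacing $G_0$ back by $G_\varepsilon$ inside the power costs another $o(1;\varepsilon\to0)$, which matches the statement in the iterated-limit sense $N\to\infty\,\triangleright\,\varepsilon\to0$.

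The main obstacle is the uniform Riemann-sum step with moving lower limits. I would handle it by sandwiching: for each fixed $\eta>0$, and for $N$ large enough, the constraint set lies between $\{t_l>(1+\varepsilon)t_{l-1}+\eta\}$ and $\{t_l>(1+\varepsilon)t_{l-1}-\eta\}$ in log coordinates. Each of these is a Jordan-measurable region, so the multidimensional logarithmic Riemann sum converges on it. Letting $\eta\to0$ then closes the argument.
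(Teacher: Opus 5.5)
Your argument is correct, but it takes a longer route than the paper.

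The paper never computes the $N\to\infty$ limit of the constrained sum for fixed $\varepsilon$. It compares $S_{N,\varepsilon}$ directly with the unconstrained ordered sum $U_{N,\varepsilon}$ over $\lceil N^\delta\rceil\le a_1<\cdots<a_L$. It evaluates $U_{N,\varepsilon}$ by symmetry ($U_{N,\varepsilon}=\frac{1}{L!}(\sum_a\cdots)^L+o(1)$) and the one-dimensional Lemma~\ref{lem:log sum to int}. It then bounds the discrepancy using that, for fixed $a_{i-1}$,
$\sum_{a_i=a_{i-1}+1}^{a_{i-1}+\lfloor a_{i-1}^{1+\varepsilon}\rfloor}\frac{1}{a_i\log N}\lesssim\frac{\log(1+a_{i-1}^{\varepsilon})}{\log N}\le\varepsilon\kappa+o(1)$.

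Your remark that such a direct comparison cannot work rests on an arithmetic slip. In fact $\log(a_{i-1}+a_{i-1}^{1+\varepsilon})=(1+\varepsilon)\log a_{i-1}+O(1)$, not $\log a_{i-1}+(1+\varepsilon)\log a_{i-1}+O(1)$. So the excluded window has logarithmic width $\varepsilon t_{i-1}\le\varepsilon\kappa$. You are right that it is not negligible at fixed $\varepsilon$, but it is $O(\varepsilon)$, which is all the iterated limit $N\to\infty\,\triangleright\,\varepsilon\to0$ requires. Your own final step, the $O(\varepsilon)$ measure of the difference set, is exactly this estimate in continuum form.

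What your route buys is the exact fixed-$\varepsilon$ limit $\int_{\{t_1\ge\delta,\ t_l>(1+\varepsilon)t_{l-1}\}}\prod_l G_\varepsilon(t_l)\,dt$, which is more than the lemma needs. What it costs is two ingredients the appendix does not supply:
\begin{enumerate}
\item a multidimensional version of Lemma~\ref{lem:log sum to int} for Riemann-integrable, discontinuous integrands (the appendix lemma is one-dimensional and assumes continuity);
\item a uniform treatment of the moving lower endpoints.
\end{enumerate}
The extension is routine. For $a\ge N^\delta$, the weights $1/(a\log N)$ are asymptotically the lengths of the cells $[\log a/\log N,\log(a+1)/\log N)$, and the mesh tends to $0$, so your sandwich argument goes through. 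Still, it must be stated and proved rather than attributed to the appendix.

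Finally, the detour through $G_0$ is unnecessary. The difference set has measure $O(\varepsilon)$, and on the ordered simplex one has $\int\prod_l G_\varepsilon(t_l)\,dt=\frac{1}{L!}\bigl(\int_\delta^\infty G_\varepsilon\bigr)^L$ directly, so you can stay with $G_\varepsilon$ throughout.
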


\begin{proof}
    We set
    \begin{equation}
        H_\varepsilon(t) \coloneqq \int_{\RN^2} F_\varepsilon(x,t)\, \nu_{\Gamma/2}(dx).
    \end{equation}
    Note that $H_\varepsilon$ is continuous, is supported on a compact set, 
    and satisfies $\|H_\varepsilon\|_\infty \leq \|f\|_\infty$.
    Define
    \begin{align}
        S_{N,\varepsilon}
        &\coloneqq
        \sum_{(a_l)_{l=1}^L \in \CutTimes_L(\varepsilon)}
        \prod_{l=1}^{L}
        \frac{1}{a_l\log N}
        H_\varepsilon\Bigl(\frac{\log a_l}{\log N}\Bigr),
        \\
        U_{N,\varepsilon}
        &\coloneqq
        \sum_{\lceil N^\delta \rceil \leq a_1 < \cdots < a_L <\infty}
        \prod_{l=1}^{L}
        \frac{1}{a_l\log N}
        H_\varepsilon\Bigl(\frac{\log a_l}{\log N}\Bigr).
    \end{align}
    By symmetry,
    we have 
    \begin{equation}
        U_{N,\varepsilon}
        =
        \frac{1}{L!}
        \left(
            \sum_{a = \lceil N^\delta \rceil}^{\infty}
            \frac{1}{a\log N}
            H_\varepsilon\Bigl(\frac{\log a}{\log N}\Bigr)
        \right)^L
        +o_{f,\delta,L}(1;N\to\infty).
        \label{eq: symmetrized logarithmic sum}
    \end{equation}
    This, combined with Lemma~\ref{lem:log sum to int}, yields 
    \begin{equation} \label{eq: unrestricted logarithmic sum}
        \lim_{N\to\infty}U_{N,\varepsilon}
        =
        \frac{1}{L!}
        \left(
            \int_\delta^\infty H_\varepsilon(t)\,dt
        \right)^L.
    \end{equation}
    It remains to estimate the difference between $S_{N,\varepsilon}$ and $U_{N,\varepsilon}$.
    The support assumption on $f$ implies 
    \begin{equation}
        U_{N,\varepsilon}
        =
        \sum_{\lceil N^\delta \rceil \leq a_1 < \cdots < a_L \leq \lfloor N^\kappa \rfloor}
        \prod_{l=1}^{L}
        \frac{1}{a_l\log N}
        H_\varepsilon\Bigl(\frac{\log a_l}{\log N}\Bigr).
    \end{equation}
    Hence, when $L = 1$, we have $U_{N,\varepsilon} = S_{N, \varepsilon}$,   
    which, combined with \eqref{eq: unrestricted logarithmic sum}, yields the desired result.
    Assume $L \geq 2$.
    Then the difference between $S_{N,\varepsilon}$ and $U_{N,\varepsilon}$ is supported 
    on ordered tuples $(a_l)_{l=1}^L$ in $[N^\delta, N^\kappa] \cap \ZN$ 
    for which $a_l \leq a_{l-1}+\lfloor a_{l-1}^{1+\varepsilon} \rfloor$ for some $2\leq l\leq L$.
    We thus have
    \begin{align}
        |U_{N,\varepsilon}-S_{N,\varepsilon}|
        &\lesssim_{L,f}
        \sum_{i=2}^{L}
        \sum_{\substack{\lceil N^\delta \rceil \leq a_1 < \cdots < a_L \leq \lfloor N^\kappa \rfloor\\
        a_i\leq a_{i-1}+\lfloor a_{i-1}^{1+\varepsilon} \rfloor}}
        \prod_{l=1}^{L}\frac{1}{a_l\log N}.
    \end{align}
    For fixed $a_{i-1} \in [N^\delta, N^\kappa] \cap \ZN$,
    \begin{equation}
        \sum_{a_i = a_{i-1}+1}^{a_{i-1} + \lfloor a_{i-1}^{1+\varepsilon} \rfloor}
        \frac{1}{a_i\log N}
        \lesssim_\delta
        \frac{1}{\log N}
        \log\bigl(1+a_{i-1}^{\varepsilon}\bigr)
        =o_{f,\delta}(1; N \to \infty\, \triangleright\, \varepsilon \to 0).
    \end{equation}
    This yields, for each $i \in \{2,\ldots,L\}$,
    \begin{align}
        \sum_{\substack{\lceil N^\delta \rceil \leq a_1 < \cdots < a_L \leq \lfloor N^\kappa \rfloor\\
        a_i\leq a_{i-1}+\lfloor a_{i-1}^{1+\varepsilon} \rfloor}}
        \prod_{l=1}^{L}\frac{1}{a_l\log N}
        &\leq    
        \frac{o_{f,\delta}(1; N \to \infty\, \triangleright\, \varepsilon \to 0)}{(\log N)^{L-1}}
        \left(\sum_{n=1}^{\lfloor N^\kappa \rfloor} \frac{1}{n}\right)^{L-1}\\
        &= 
        o_{f,\delta,L}(1; N \to \infty\, \triangleright\, \varepsilon \to 0).
    \end{align}
    Therefore,
    \begin{equation}
        \lim_{\varepsilon \to 0}
        \limsup_{N\to\infty}
        |U_{N,\varepsilon}-S_{N,\varepsilon}|
        = 0.
    \end{equation}
    Combining this with \eqref{eq: unrestricted logarithmic sum} gives the desired estimate.
\end{proof}

We are now in a position to prove Theorem~\ref{thm:lap conv}.

\begin{proof}[Proof of Theorem~\ref{thm:lap conv}]
    By Proposition~\ref{prop:chaos}, Corollary~\ref{cor:clust approx}, and 
    Lemmas~\ref{lem:remove annu}--\ref{lem:clust to int},
    we have 
    \begin{align}
        E\!\left[e^{-\Pi_N^{(\delta)}(f)}\right]
        &= 
        1 +
        \sum_{L=1}^M
        \frac{1}{L!}
        \left(
            \frac{c_*}{2}
            \int_\delta^\infty
            \int_{\RN^2}
            F_\varepsilon(x,t)\, \nu_{\Gamma/2}(dx)\, dt
        \right)^L\\[0.4em]
        &\qquad
        +
        o_{f,\delta}(1; N \to \infty\, \triangleright\, \varepsilon \to 0\, \triangleright\, M \to \infty).
        \label{eq:lap conv1}
    \end{align}
    We have $F_\varepsilon(x, t) \to F_0(x,t)$ locally uniformly in $(x, t) \in \RN^2 \times [0,\infty)$.
    The support assumption on $f$ implies $F_\varepsilon(\cdot, t) = 0$ 
    for any $t \geq \kappa$ and $\varepsilon \in (0,1/2)$.
    Thus, we deduce that 
    \begin{equation}
        \lim_{\varepsilon \to 0} 
        \int_\delta^\infty
        \int_{\RN^2} F_\varepsilon(x,t)\, \nu_{\Gamma/2}(dx)\, dt
        = 
        \int_\delta^\infty
        \int_{\RN^2} F_0(x,t)\, \nu_{\Gamma/2}(dx)\, dt.
    \end{equation}
    Combining this with \eqref{eq:lap conv1} yields
    \begin{equation}
        \lim_{N \to \infty}
        E\!\left[e^{-\Pi_N^{(\delta)}(f)}\right]
        =
        \exp\!\left(
            \frac{c_*}{2}
            \int_\delta^\infty
            \int_{\RN^2} F_0(x,t)\, \nu_{\Gamma/2}(dx)\, dt
        \right).
    \end{equation}
    We now let $\delta \to 0$.
    By Proposition~\ref{prop:early col},
    \begin{equation} \label{eq:lap conv2}
        \lim_{N \to \infty} 
        E\!\left[e^{-\Pi_N(f)}\right]
        =\lim_{\delta\to 0} \lim_{N \to \infty} 
        E\!\left[e^{-\Pi^{(\delta)}_N(f)}\right]
        = 
        \exp\!\left(
            \frac{c_*}{2}
            \int_0^\infty
            \int_{\RN^2} F_0(x,t)\, \nu_{\Gamma/2}(dx)\, dt
        \right).
    \end{equation}
    The relation $2/c_* = \Charact$ following from \eqref{eq:Charact} and \eqref{eq:c_*} yields
    \begin{equation} \label{eq:lap conv3}
        \frac{c_*}{2} F_0(x,t)
        =
        - \frac{c_*}{2}\cdot \frac{\Charact f(x, t)}{f(x, t)\,t + \Charact}
        = 
        - \frac{f(x, t )}{f(x, t)\,t + \Charact}.
    \end{equation}
    Therefore, the desired result follows from \eqref{eq:lap conv2} and~\eqref{eq:lap conv3}.
\end{proof}

\section{Proof of Theorems~\ref{thm:main} and~\ref{thm:limit meas}} \label{sec:proof}

In this section, 
combining Theorem~\ref{thm:lap conv} 
and standard convergence criteria for random variables and random measures,
we complete the proofs of Theorems~\ref{thm:main} and~\ref{thm:limit meas}.

\begin{proof}[Proof of Theorems~\ref{thm:main} and~\ref{thm:limit meas}]
    Let $M(dt\, dx\, dw)$ be a Poisson random measure on $(0, \infty) \times \RN^2 \times [0, \infty)$ 
    with intensity measure $\Charact t^{-2} e^{-\Charact w/t}\, dt\, \nu_{\Gamma/2}(dx)\, dw$
    and write $\{(t_i,x_i,w_i)\}_{i\in I}$ for its atoms.
    Define a random measure on $\RN^2 \times [0,\infty)$ by
    $\Pi_* \coloneqq \sum_{i \in I} w_i \delta_{(x_i, t_i)}$.
    This indeed defines a locally finite random measure.  
    In fact, for every $T>0$, 
    one can verify $E[\Pi_*(\RN^2\times[0,T])]=T/\Charact < \infty$.
    Fix a compactly supported, non-negative, continuous function $f$ on $\RN^2\times[0,\infty)$.
    Since $\|f\|_\infty < \infty$,
    for all sufficiently small $u \geq 0$,
    the function $u f$ satisfies Assumption~\ref{assum:test}.
    Fix such a $u$.
    We have 
    \begin{equation}
        \Pi_*(uf)
        = 
        \sum_{i \in I} 
        w_i u f(x_i,t_i)
        = 
        {\int_0^\infty \int_{\RN^2} \int_0^\infty}
        u w f(x, t)\, M(dt\, dx\, dw).
    \end{equation}
    By a standard fact on the Laplace transforms of Poisson random measures 
    (see, for example, \cite[Lemma~3.1(i)]{Kallenberg_17_Random}),
    it follows that
    \begin{align}
        E[e^{-\Pi_*(uf)}]
        =
        \exp\!
        \left(
            -{\int_0^\infty \int_{\RN^2} \int_0^\infty} 
            \left( 
                1- 
                e^{-u w f(x, t)}
            \right)
            \Charact t^{-2} e^{-\Charact w/t}\, dt\, \nu_{\Gamma/2}(dx)\, dw
        \right).
    \end{align}
    Evaluating the integral with respect to $w\in[0,\infty)$,
    the right-hand side equals 
    \begin{equation}
        \exp\!\left(
            -\int_0^\infty \int_{\RN^2} \frac{u f(x, t)}{t u f(x, t) + \Charact}\, 
            \nu_{\Gamma/2}(dx)\,dt
        \right).
    \end{equation}
    Combining this with Theorem~\ref{thm:lap conv},
    we deduce that 
    \begin{equation}
        \lim_{N \to \infty} E[e^{-\Pi_N(uf)}]
        = 
        E[e^{-\Pi_*(uf)}].
    \end{equation}
    By linearity,
    we have $\Pi_N(uf) = u \Pi_N(f)$ and $\Pi_*(uf) = u \Pi_*(f)$.
    Hence, 
    the above convergence implies that 
    the Laplace transform of $\Pi_N(f)$ converges to that of $\Pi_*(f)$ 
    for all sufficiently small $u$.
    By \cite[Theorem~2]{Mukherjea_Rao_Suen_06_Moment},
    it follows that
    $\Pi_N(f) \xrightarrow{\mathrm{d}} \Pi_*(f)$.
    A standard characterization of distributional convergence of random measures 
    (cf.\ \cite[Theorem~4.11]{Kallenberg_17_Random})
    then yields that 
    $\Pi_N\xrightarrow{\mathrm d}\Pi_*$ in the vague topology.
    This completes the proof.
\end{proof}

\appendix
\section{A logarithmic Riemann-sum approximation}
\label{sec:Riemann sum}

In this appendix, we record the logarithmic Riemann-sum approximation
used in the proof of Lemma~\ref{lem:clust to int}.
For completeness, we include a proof.

\begin{lem} \label{lem:log sum to int}
    Let $\delta>0$ and  $f$ be a compactly supported, continuous function on $[0, \infty)$.   
    It then holds that 
    \begin{equation}
        \lim_{N \to \infty}
        \sum_{n = \lceil N^\delta \rceil}^\infty
        \frac{1}{n \log N} f\!\left( \frac{\log n}{\log N} \right)
        = 
        \int_\delta^\infty f(x)\, dx.
    \end{equation}
\end{lem}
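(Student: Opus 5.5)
The approach is the substitution $n = N^{s}$, under which the sum becomes a Riemann sum for $\int f(s)\,ds$ with mesh that is vanishingly fine in the variable $s = \log n/\log N$. Since $f$ is compactly supported, fix $K$ with $\supp f \subseteq [0,K]$. Then only the finitely many terms with $\lceil N^\delta\rceil \le n \le N^K$ contribute, and the sum is finite for each $N$. Write $s_n \coloneqq \log n/\log N$ and note that
\begin{equation}
    \int_{s_n}^{s_{n+1}} ds = \frac{\log(1+1/n)}{\log N},
\end{equation}
which differs from $\frac{1}{n\log N}$ by at most $\frac{1}{2n^2\log N}$.

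The key step is to compare each summand with the corresponding integral piece:
\begin{equation}
    \left|\frac{1}{n\log N} f(s_n) - \int_{s_n}^{s_{n+1}} f(s)\,ds\right|
    \le \frac{\|f\|_\infty}{2n^2\log N} + \frac{\log(1+1/n)}{\log N}\,\omega_f\!\left(\frac{1}{n\log N}\right),
\end{equation}
where $\omega_f$ is the modulus of continuity of $f$. This modulus is finite and tends to $0$ because $f$ is uniformly continuous, being continuous with compact support. For $n\ge N^\delta$ the argument $\frac{1}{n\log N}$ is at most $\frac{1}{N^\delta\log N}\to0$, so the second error term is bounded uniformly in $n$ by $\omega_f(N^{-\delta}/\log N)$ times the mesh.

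Summing over $n$ from $\lceil N^\delta\rceil$ to $\lceil N^K\rceil$ gives two error contributions. The first is $\frac{\|f\|_\infty}{\log N}\sum_n n^{-2}$, which tends to $0$. The second is $\omega_f(N^{-\delta}/\log N)$ multiplied by the total length $\sum_n \log(1+1/n)/\log N \lesssim K$, which also tends to $0$. The telescoped integral is $\int_{s_{\lceil N^\delta\rceil}}^{s_{\lceil N^K\rceil+1}} f(s)\,ds$. Its lower endpoint $s_{\lceil N^\delta\rceil}\to\delta$ and its upper endpoint exceeds $K$, so by boundedness of $f$ this integral converges to $\int_\delta^\infty f(x)\,dx$.

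There is no real obstacle here. The only care needed is to keep the error uniform over the range of summation, and this is handled by uniform continuity together with the fact that the mesh is at most $N^{-\delta}/\log N$ throughout the summation range.
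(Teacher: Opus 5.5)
Your proposal is correct and follows essentially the same route as the paper. The paper compares each summand with $\int_n^{n+1}\frac{1}{x\log N}f\bigl(\frac{\log x}{\log N}\bigr)\,dx$ and then substitutes $y=\log x/\log N$; your piece $\int_{s_n}^{s_{n+1}}f(s)\,ds$ is exactly that integral after the substitution, and the remaining steps (uniform continuity, an $O(n^{-2})$ correction, convergence of the endpoints to $\delta$) are the same.
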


\begin{proof}
    Take $t = t(f) > \delta$ sufficiently large so that $\supp(f)\subset[0,t)$,
    and set $m_N \coloneqq \lceil N^\delta\rceil$ and $M_N\coloneqq\lfloor N^t\rfloor$.
    Then
    \begin{equation}
        \sum_{n=m_N}^\infty
        \frac{1}{n\log N}f\!\left(\frac{\log n}{\log N}\right)
        =
        \sum_{n=m_N}^{M_N}
        \int_n^{n+1}
        \frac{1}{n\log N}f\!\left(\frac{\log n}{\log N}\right)\,dx.
    \end{equation}
    For $x\in[n,n+1]$, we have
    \begin{equation}
        \left|\frac{\log n}{\log N}-\frac{\log x}{\log N}\right|
        \leq \frac{1}{\log N},
    \end{equation}
    uniformly in $n \in \NN$.
    Hence, by the uniform continuity of $f$,
    for any $x \in [n, n+1]$,
    \begin{align}
        \left|
            \frac{1}{n}f\!\left(\frac{\log n}{\log N}\right)
            -
            \frac{1}{x}f\!\left(\frac{\log x}{\log N}\right)
        \right|
        &\leq
        \frac{1}{n}
        \left|
            f\!\left(\frac{\log n}{\log N}\right)
            -
            f\!\left(\frac{\log x}{\log N}\right)
        \right|
        +
        \|f\|_\infty\left|\frac{1}{n}-\frac{1}{x}\right|\\
        &=
        \frac{o_f(1)}{x}
        +
        O_f(1) x^{-2},
    \end{align}
    where the error is uniform in $n \in \NN$ and $x \in [n,n+1]$.
    It follows that
    \begin{align}
        &\left|
        \sum_{n=m_N}^\infty
        \frac{1}{n\log N}f\!\left(\frac{\log n}{\log N}\right)
        -
        \int_{m_N}^{M_N+1}
        \frac{1}{x\log N}f\!\left(\frac{\log x}{\log N}\right)\,dx
        \right|\\
        &\leq
        \frac{1}{\log N}
        \int_{m_N}^{M_N+1}
        \left(\frac{o_f(1)}{x}+O_f(1)x^{-2}\right)\,dx\\
        &=o_f(1;N\to\infty).
    \end{align}
    By the change of variables $y=(\log x)/(\log N)$,
    \begin{equation}
        \int_{m_N}^{M_N+1}
        \frac{1}{x\log N}f\!\left(\frac{\log x}{\log N}\right)\,dx
        =
        \int_{\frac{\log m_N}{\log N}}^{\frac{\log(M_N+1)}{\log N}}
        f(y)\,dy.
    \end{equation}
    Since, as $N \to \infty$,
    \begin{equation}
        \frac{\log m_N}{\log N}\longrightarrow\delta,
        \qquad
        \frac{\log(M_N+1)}{\log N}\longrightarrow t,
    \end{equation}
    we obtain the desired result.
\end{proof}

\section*{Declaration on the use of AI}
All mathematical arguments and proofs were developed and verified by the authors.   AI tools were used only for discussions on the presentation of the main results and for reviewing the proofs.  

\section*{Acknowledgements}
The authors would like to thank Dr David Croydon for his helpful advice.
 The first author was supported by JSPS KAKENHI Grant Numbers JP24K16937 and JP25K00911. 
 The second author was supported by JSPS KAKENHI Grant Number JP26KJ0336. 

\bibliographystyle{amsplain}
\bibliography{Noda_bibfile}
\end{document}